\newtheorem{theorem}{Theorem}[section]
\newtheorem{corollary}{Corollary}[section]
\newtheorem{lemma}{Lemma}[section]
\newtheorem{proposition}{Proposition}[section]
\newtheorem{remark}{Remark}[section]
\newtheorem{definition}{Definition}[section]
\numberwithin{equation}{section}
\newenvironment{proof}{\medskip\par\noindent{\bf Proof.}\ }{\qquad
\raisebox{-0.5mm}{\rule{1.5mm}{4mm}}\vspace{6pt}}
\newcommand{\bbr}{\mathbb{R}}
\newcommand{\h}{H^1(\bbr^N)}
\newcommand{\ve}{\varepsilon}
\newcommand{\bd}{\begin{definition}}
\newcommand{\ed}{\end{definition}}
\newcommand{\br}{\begin{remark}}
\newcommand{\er}{\end{remark}}
\newcommand{\be}{\begin{equation}}
\newcommand{\ee}{\end{equation}}
\newcommand{\bc}{\begin{corollary}}
\newcommand{\ec}{\end{corollary}}
\begin{document}

\title
{\Large\bf Positive solutions to an elliptic equation in $\bbr^N$ of the Kirchhoff type}

\author{
Yisheng Huang$^{a},$\thanks{E-mail address: yishengh@suda.edu.cn(Yisheng Huang)}\quad
Zeng Liu$^{b},$\thanks{E-mail address: luckliuz@163.com(Zeng Liu)}\quad
Yuanze Wu$^{c}$\thanks{Corresponding
author. E-mail address: wuyz850306@cumt.edu.cn (Yuanze Wu).}\\%
\footnotesize$^{a}${\it  Department of Mathematics, Soochow University,}\\
\footnotesize{Suzhou 215006, P.R. China }\\%
\footnotesize$^{b}${\it  Department of Mathematics, Suzhou University of Science and Technology,}\\
\footnotesize{Suzhou 215009, P.R. China}\\
\footnotesize$^{c}${\it  College of Sciences, China University of Mining and Technology,}\\
\footnotesize{Xuzhou 221116, P.R. China }}%
\date{}
\maketitle

\begin{center}
\begin{minipage}{120mm}

\noindent{\bf Abstract:}   In this paper, we consider the following Kirchhoff type problem
$$
\left\{\aligned&-\bigg(a+b\int_{\bbr^N}|\nabla u|^2dx\bigg)\Delta u+V(x) u=|u|^{p-2}u&\text{ in }\bbr^N,\\%
&u\in\h,\endaligned\right.\eqno{(\mathcal{P}_{a,b})}%
$$
where $N\geq3$, $2<p<2^*=\frac{2N}{N-2}$, $a,b>0$ are parameters and $V(x)$ is a potential function.  Under some mild conditions on $V(x)$, we prove that $(\mathcal{P}_{a,b})$ has a positive solution for $b$ small enough by the variational method, a non-existence result is also established in the cases $N\geq4$.  Our results in the case $N=3$ partial improve the results in \cite{G15,LY14} and our results in the cases $N\geq4$ are totally new to the best of our knowledge.  By combining the scaling technique, we also give a global description on the structure of the positive solutions to the autonomous form of $(\mathcal{P}_{a,b})$, that is $V(x)\equiv\lambda>0$.  This result can be seen as a partial complement of the studies in \cite{A12,A13}.

\vspace{6mm} \noindent{\bf Keywords:} Positive solution; Kirchhoff problem; Variational method; Scaling technique.%

\vspace{6mm}\noindent {\bf AMS} Subject Classification 2010: 35B09; 35B38; 35J20; 35J61.%

\end{minipage}
\end{center}

\vskip0.26in

\section{Introduction}
In this paper, we consider the following Kirchhoff type problem
$$
\left\{\aligned&-\bigg(a+b\int_{\bbr^N}|\nabla u|^2dx\bigg)\Delta u+V(x)u=|u|^{p-2}u&\text{ in }\bbr^N,\\%
&u\in\h,\endaligned\right.\eqno{(\mathcal{P}_{a,b})}%
$$
where $N\geq3$, $2<p<2^*=\frac{2N}{N-2}$, $a,b>0$ are parameters and $V(x)$ is a potential function satisfying the following condition:
\begin{enumerate}
\item[$(V)$]  There exist two positive constants $v_0$ and $v_\infty$ such that
\begin{eqnarray*}
v_0\leq V(x)\leq v_\infty=\lim_{|y|\to+\infty}V(y)\quad \text{for all }x\in\bbr^N
\end{eqnarray*}
and there exists a subset of $\bbr^N$ with positive Lebesgue measure such that $V(x)<v_\infty$ on this set.
\end{enumerate}

The operator $-\bigg(a+b\int_{\Omega}|\nabla u|^2dx\bigg)\Delta u$ first appears in the following model:
\begin{equation}\label{eq001}
\left\{\aligned &u_{tt}-\bigg(a+b\int_{\Omega}|\nabla u|^2dx\bigg)\Delta u=h(x,u)\quad\text{in }\Omega\times(0, T),\\
&u=0\quad\text{on }\partial\Omega\times(0, T),\\
&u(x,0)=u_0(x),\quad u_t(x,0)=u^*(x),\endaligned\right.
\end{equation}
where $\Omega\subset\bbr^N$ is a bounded domain, $T>0$ is a constant, $u_0, u^*$ are continuous functions.  Such model was first proposed by Kirchhoff in 1883 as an extension of the classical D'Alembert's wave equations for free vibration of elastic strings, Kirchhoff's model takes into account the changes in length of the string produced by transverse vibrations.  Due to this reason, the operators as $-\bigg(a+b\int_{\Omega}|\nabla u|^2dx\bigg)\Delta u$ are always called as the Kirchhoff type operators, and the equations including the Kirchhoff type operators are always called as the Kirchhoff type problems.  In \eqref{eq001}, $u$ denotes the displacement, the nonlinearity $h(x,u)$ denotes the external force and the parameter $a$ denotes the initial tension while the parameter $b$ is related to the intrinsic properties of the string (such as Young¡¯s modulus).  For more details on the physical background of the Kirchhoff type problems, we refer the readers to \cite{A12,K83}.

Under some suitable assumptions on the nonlinearities, the elliptic type Kirchhoff problems have variational structures in some proper Hilbert spaces.  Due to this reason, such problems have been studied extensively in the literatures by the variational method, see for example \cite{A12,A13,ACM05,AF12,CWL12,CKW11,G15,HZ12,HLP14,HLW151,LLS12,LLS14,LY14,LLT152,N141,PZ06,R15,SW14,WHL151,W15} and the references therein.  In particular, in \cite{LY14}, Li and Ye studied the Kirchhoff type problem $(\mathcal{P}_{a,b})$ in $\bbr^3$.
Under some further assumptions on $V(x)$, the authors proved that $(\mathcal{P}_{a,b})$ has a positive ground state solution in $\bbr^3$ for $3<p<6$ by the variational method.  Their proof is dependent heavily on the following Pohozaev type condition:
\begin{enumerate}
\item[$(V_0)$] $V(x)\in C(\bbr^3, \bbr)$ is weakly differentiable and $(D V(x),x)\in L^\infty(\bbr^3)\cup L^{\frac32}(\bbr^3)$ and
\begin{eqnarray*}
V(x)-(D V(x), x)\geq0\quad\text{a.e. in }\bbr^3,
\end{eqnarray*}
where $(\cdot,\cdot)$ is the usual inner product in $\bbr^3$.
\end{enumerate}
Li and Ye's result was partially improved by Guo in \cite{G15}, where the following non-autonomous Kirchhoff type problem was considered
\begin{eqnarray}\label{eqnew0003}
\left\{\aligned&-\bigg(a+b\int_{\bbr^3}|\nabla u|^2dx\bigg)\Delta u+V(x)=f(u)&\text{ in }\bbr^3,\\
&u\in H^1(\bbr^3),\endaligned\right.
\end{eqnarray}
where $a,b>0$ are parameters, $V(x)$ is a potential function and $f(u)$ is a nonlinearity involving the power-type $|u|^{p-2}u$ for $2<p<6$.  Under some further assumptions on $V(x)$ and $f(u)$, Guo proved that \eqref{eqnew0003} has a positive ground state solution, in particular, when $f(u)=|u|^{p-2}u$, then \eqref{eqnew0003} has a positive ground state solution for all $2<p<6$.  Guo's proof is also dependent heavily on the following Pohozaev type condition:
\begin{enumerate}
\item[$(V_1)$] $V(x)\in C^1(\bbr^3, \bbr)$ and there exists a positive constant $A<a$ such that
\begin{eqnarray*}
|(\nabla V(x), x)|\leq\frac{A}{|x|^2}\quad\text{for all $x$ in }\bbr^3\backslash\{0\}.
\end{eqnarray*}
\end{enumerate}
Since \eqref{eqnew0003} involves $(\mathcal{P}_{a,b})$, a natural question inspired by the above facts is that
\begin{enumerate}
\item[$(Q_1)$] Are the Pohozaev type conditions as $(V_0)$ or $(V_1)$ necessary in finding the positive solution of $(\mathcal{P}_{a,b})$?
\end{enumerate}
It is worth to point out that, as pointed out by Li and Ye in \cite{LY14}, by using a similar method in \cite{HZ12}, we can obtain the following.
\begin{theorem}\label{thmnew001}
Let $N=3$ and $4<p<6$.  If $V(x)$ satisfies the condition $(V)$, then $(\mathcal{P}_{a,b})$ has a positive solution.
\end{theorem}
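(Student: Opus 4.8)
The plan is to obtain the positive solution as a mountain pass critical point of the energy functional associated with $(\mathcal{P}_{a,b})$, and to use the condition $(V)$ to restore the compactness that is a priori lost on the unbounded domain $\bbr^3$. Working in $H^1(\bbr^3)$ equipped with the equivalent norm $\|u\|^2=\int_{\bbr^3}(a|\nabla u|^2+V(x)u^2)\,dx$ (equivalent because $0<v_0\le V\le v_\infty$), I would study
$$
I(u)=\frac12\|u\|^2+\frac{b}{4}\Big(\int_{\bbr^3}|\nabla u|^2\,dx\Big)^2-\frac1p\int_{\bbr^3}(u^+)^p\,dx,
$$
where replacing the nonlinearity $|u|^{p-2}u$ by $(u^+)^{p-1}$ forces any nontrivial critical point to be nonnegative (pair the equation against the negative part $u^-$) and hence, by the strong maximum principle, strictly positive. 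Since $p>2$, the embedding $H^1(\bbr^3)\hookrightarrow L^p(\bbr^3)$ gives $I(u)\ge\alpha>0$ on a small sphere $\|u\|=\rho$, while $I(tu)\to-\infty$ as $t\to+\infty$ for fixed $u\not\equiv0$; this yields the mountain pass geometry and a candidate level $c>0$.

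In parallel I would introduce the autonomous ``problem at infinity'', obtained by freezing $V\equiv v_\infty$, with functional $I_\infty$, positive ground state $w$, and mountain pass level $c_\infty$. The heart of the argument is the strict comparison $c<c_\infty$: testing with the ray $t\mapsto tw$ and writing $I(tw)=I_\infty(tw)+\tfrac12\int_{\bbr^3}(V(x)-v_\infty)t^2w^2\,dx$, the integrand is nonpositive and strictly negative on a set of positive measure where $w>0$, so $I(tw)<I_\infty(tw)$ for every $t>0$, whence $c\le\max_{t\ge0}I(tw)<\max_{t\ge0}I_\infty(tw)=c_\infty$. The restriction $4<p<6$ enters decisively in two places. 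For boundedness of a $(PS)_c$ sequence $\{u_n\}$ I would compute
$$
c+o(1)\|u_n\|\ge I(u_n)-\frac1p\langle I'(u_n),u_n\rangle=\Big(\frac12-\frac1p\Big)\|u_n\|^2+\Big(\frac14-\frac1p\Big)b\Big(\int_{\bbr^3}|\nabla u_n|^2\,dx\Big)^2,
$$
and both coefficients are positive precisely because $p>4$, yielding $\{u_n\}$ bounded; the constraint $p<6=2^*$ guarantees the subcritical compact embeddings on bounded sets used throughout.

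The main obstacle is the $(PS)_c$ condition. Passing to a weak limit $u_n\rightharpoonup u$, the genuine difficulty is the nonlocal Kirchhoff coefficient: one only controls $\int_{\bbr^3}|\nabla u_n|^2\,dx\to B\ge\int_{\bbr^3}|\nabla u|^2\,dx$, so the limiting equation carries the constant $a+bB$ rather than $a+b\int|\nabla u|^2\,dx$, and the possible escape of mass to infinity must be ruled out. I would establish a splitting (global compactness) lemma showing that any loss of compactness produces a nontrivial ``bubble'' solving the problem at infinity, and therefore carries at least the energy $c_\infty$; since $c<c_\infty$, no such loss can occur, which forces $u_n\to u$ strongly, hence $I'(u)=0$ and $I(u)=c>0$. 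Finally, the $u^-$-test gives $u\ge0$ and the strong maximum principle upgrades this to $u>0$, producing the desired positive solution of $(\mathcal{P}_{a,b})$.
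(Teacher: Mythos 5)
Your proposal is correct and is essentially the argument the paper has in mind: the paper gives no proof of this theorem itself, deriving it from the method of He and Zou \cite{HZ12} (as noted in \cite{LY14}), which is precisely your scheme of mountain pass geometry, boundedness of $(PS)$ sequences via $p>4$, the strict comparison $c<c_\infty$ with the autonomous problem at infinity under condition $(V)$, and a concentration-compactness/splitting argument to exclude bubbles at infinity. Two minor points to tighten: the hypothesis $p>4$ is also what gives $I(tu)\to-\infty$ along rays (the $t^4$ Kirchhoff term must be dominated by $t^p$), and in the splitting step the bubble solves the frozen-coefficient equation $-(a+bB)\Delta v+v_\infty v=(v^+)^{p-1}$ with $B\geq\int_{\mathbb{R}^3}|\nabla v|^2dx$ rather than the limit Kirchhoff equation, but since this forces the Nehari time of $v$ for $I_\infty$ to satisfy $t_v\leq1$, the standard computation with $I_\infty-\frac14\langle I_\infty',\cdot\rangle$ still shows the bubble carries energy at least $c_\infty$, closing the argument exactly as in \cite{HZ12}.
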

It follows from Theorem~\ref{thmnew001} that the question $(Q_1)$ has a positive answer for $N=3$ and $4<p<6$.  However, to the best of our knowledge, the question $(Q_1)$ for the cases $N\geq3$ and $p\in(2,4]\cap(2,2^*)$ is still open.  Thus, the first purpose of this paper is to explore the question $(Q_1)$ in the cases $N\geq3$ and $p\in(2,4]\cap(2,2^*)$.

Before we state our first result, we need to introduce some notations.  Let
\begin{eqnarray}\label{eq0104}
\mathcal{J}_V(u)=\frac{b}{4}\|\nabla u\|_{L^2(\bbr^N)}^4+\frac{a}{2}\|\nabla u\|_{L^2(\bbr^N)}^2+\frac12\int_{\bbr^N}V(x)u^2dx-\frac1p\|u\|_{L^p(\bbr^N)}^p,
\end{eqnarray}
where $\|\cdot\|_{L^q(\bbr^N)}$ is the usual norm in $L^q(\bbr^N)(q\geq1)$.  Then it is easy to check that $\mathcal{J}_V(u)$ is of $C^2$ in $\h$ and the critical points of $\mathcal{E}(u)$ are equivalent to the weak solutions to $(\mathcal{P}_{a,b})$ under the condition $(V)$.  Let the Nehari type manifold of $\mathcal{J}_V(u)$ be
\begin{eqnarray}\label{eq0102}
\mathcal{N}_V=\{u\in\h\backslash\{0\}\mid \mathcal{J}_V'(u)u=0\}.
\end{eqnarray}
Then it is easy to see that all nontrivial critical points are contained in $\mathcal{N}_V$.  Let
\begin{eqnarray*}
G_u(t)&=&\mathcal{J}_V(tu)\\
&=&\frac{bt^4}{4}\|\nabla u\|_{L^2(\bbr^N)}^4+\frac{at^2}{2}\|\nabla u\|_{L^2(\bbr^N)}^2\\
&&+\frac{t^2}2\int_{\bbr^N}V(x)u^2dx-\frac{t^p}{p}\|u\|_{L^p(\bbr^N)}^p.
\end{eqnarray*}
Then by a direct calculation, we can see that $G_u(t)$ is of $C^2$ in $\bbr^+$ for every $u\in\h$ and $G_u'(t)=0$ if and only if $tu\in\mathcal{N}_V$.  Thus, it is natural to divide the Nehari type manifold $\mathcal{N}_V$ into the following three parts:
\begin{eqnarray}
\mathcal{N}_V^-&=&\{u\in\mathcal{N}_V\mid G_u''(1)<0\};\label{eq1005}\\
\mathcal{N}_V^0&=&\{u\in\mathcal{N}_V\mid G_u''(1)=0\};\\
\mathcal{N}_V^+&=&\{u\in\mathcal{N}_V\mid G_u''(1)>0\}.\label{eq1007}
\end{eqnarray}
Now, our first result can be stated as follows.
\begin{theorem}\label{thm1001}
Let $N\geq3$, $a>0$, $p\in(2, 4]\cap(2, 2^*)$ and $V(x)$ satisfy the condition $(V)$.  Then there exist $0<b_*(a)\leq b_{**}(a)<+\infty$ such that $(\mathcal{P}_{a,b})$ has a positive solution for $0<b<b_*(a)$, which minimizes the functional $\mathcal{J}_V(u)$ on $\mathcal{N}_V^-$.  Moreover, $(\mathcal{P}_{a,b})$ only has trivial solutions for $b>b_{**}(a)$ in the cases $N\geq4$.
\end{theorem}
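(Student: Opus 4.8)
The plan is to produce the solution as a minimizer of $\mathcal{J}_V$ on the branch $\mathcal{N}_V^-$, and to read off the non-existence for $N\geq4$ from an a priori bound that the Nehari identity forces on the parameter $b$. Write $A=\|\nabla u\|_{L^2(\bbr^N)}^2$, $B=\int_{\bbr^N}V(x)u^2dx$ and $C=\|u\|_{L^p(\bbr^N)}^p$. First I would carry out the fibering analysis of $G_u(t)=\mathcal{J}_V(tu)$: for $t>0$ the equation $G_u'(t)=0$ reduces to $g(t):=bt^2A^2+(aA+B)-t^{p-2}C=0$, and because $g(0^+)=aA+B>0$, $g(t)\to+\infty$, and $g$ has a unique interior minimum (here $2<p\leq4$ is essential, so that $t^{p-2}$ is dominated by the Kirchhoff power $t^2$), the function $g$ has either no positive root or exactly two, the smaller one giving a point of $\mathcal{N}_V^-$ and the larger one a point of $\mathcal{N}_V^+$. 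A direct computation shows that on $\mathcal{N}_V$ one has $G_u''(1)=(4-p)bA^2-(p-2)(aA+B)$, so that $\mathcal{N}_V^-$ is precisely the set where $(4-p)bA^2<(p-2)(aA+B)$. I would then \emph{define} $b_*(a)$ as the threshold below which every relevant direction has this two-root structure and the projection $u\mapsto t^-(u)u\in\mathcal{N}_V^-$ is well defined, continuous, and stays bounded away from $\mathcal{N}_V^0$; the non-existence argument will then force $b_*(a)\leq b_{**}(a)<+\infty$.

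On $\mathcal{N}_V$ the energy collapses to $\mathcal{J}_V(u)=\frac{p-4}{4p}bA^2+\frac{p-2}{2p}(aA+B)$, and on $\mathcal{N}_V^-$, where $bA^2<\frac{p-2}{4-p}(aA+B)$, this yields $\mathcal{J}_V(u)>\frac{p-2}{4p}(aA+B)\geq c\|u\|_{\h}^2$. Together with the Sobolev lower bound $\|u\|_{\h}\geq c'>0$ valid on all of $\mathcal{N}_V$, this gives $m^-:=\inf_{\mathcal{N}_V^-}\mathcal{J}_V>0$ and the coercivity needed for boundedness of minimizing sequences. Next I would establish the strict inequality $m^-<m_\infty^-$, where $m_\infty^-$ is the corresponding infimum for the autonomous functional $\mathcal{J}_\infty$ obtained by replacing $V$ with $v_\infty$. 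Taking a positive minimizer $w$ of $\mathcal{J}_\infty$ on $\mathcal{N}_\infty^-$ (available from the autonomous analysis when $b$ is small) and projecting it to $t_Vw\in\mathcal{N}_V^-$, condition $(V)$, namely $V\leq v_\infty$ with $V<v_\infty$ on a set of positive measure and $w>0$, gives $\mathcal{J}_V(t_Vw)<\mathcal{J}_\infty(t_Vw)\leq m_\infty^-$, whence $m^-<m_\infty^-$.

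The heart of the argument, and the step I expect to be hardest, is the compactness of a minimizing sequence $\{u_n\}\subset\mathcal{N}_V^-$. After replacing it by a bounded Palais--Smale sequence for $\mathcal{J}_V$ (here $b<b_*(a)$ is used, so that $\mathcal{N}_V^-$ stays away from $\mathcal{N}_V^0$ and the Lagrange multiplier is controlled) and extracting $u_n\rightharpoonup u$, I must exclude both vanishing and the escape of mass to infinity. The obstruction peculiar to the Kirchhoff setting is the nonlocal term: under $u_n=u+w_n$ with $w_n\rightharpoonup0$, the Brezis--Lieb lemma gives
\[
\|\nabla u_n\|_{L^2(\bbr^N)}^4=\|\nabla u\|_{L^2(\bbr^N)}^4+\|\nabla w_n\|_{L^2(\bbr^N)}^4+2\|\nabla u\|_{L^2(\bbr^N)}^2\|\nabla w_n\|_{L^2(\bbr^N)}^2+o(1),
\]
and the cross term $2b\|\nabla u\|_{L^2(\bbr^N)}^2\|\nabla w_n\|_{L^2(\bbr^N)}^2$ couples the two profiles, so the energy does not split cleanly into a local part plus a translated copy of the problem at infinity. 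I would control this term using the smallness of $b$, showing that any nonzero escaping profile carries energy at least $m_\infty^-$ up to an error that is $o(1)$ as $b\to0$; since $m^-<m_\infty^-$, escape is ruled out, vanishing is excluded by $m^->0$, and therefore $u_n\to u$ strongly with $u\in\mathcal{N}_V^-$ attaining $m^-$. A Lagrange-multiplier computation then shows $u$ solves $(\mathcal{P}_{a,b})$, and since $\mathcal{J}_V(|u|)=\mathcal{J}_V(u)$ with $|u|\in\mathcal{N}_V^-$, I may take $u\geq0$; the strong maximum principle upgrades this to $u>0$.

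For the non-existence when $N\geq4$ I would argue purely from the Nehari identity $(a+bA)A+B=C$ together with the Sobolev and Gagliardo--Nirenberg inequalities, with no recourse to a Pohozaev identity (consistent with $(V)$ not requiring $V$ to be differentiable). Dropping the positive terms $aA,B\geq0$ gives $bA^2\leq C$, while the Gagliardo--Nirenberg bound reads $C\leq K\,A^{N(p-2)/4}\|u\|_{L^2(\bbr^N)}^{2\gamma}$ with $\gamma=(2p-N(p-2))/4$; using $C\geq v_0\|u\|_{L^2(\bbr^N)}^2$ to eliminate the $L^2$ norm yields $\|u\|_{L^2(\bbr^N)}^2\leq C_4A^{N/(N-2)}$ and hence $bA^{(N-4)/(N-2)}\leq C_5$, the constants being independent of $b$. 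On the other hand the Sobolev lower bound $\|u\|_{\h}^2\geq\delta_0>0$, combined with the last estimate, forces $A\geq A_{\min}>0$, again independently of $b$. For $N=4$ the exponent $(N-4)/(N-2)$ vanishes and one obtains $b\leq C_5$ directly, while for $N\geq5$ it is positive and one obtains $b\leq C_5A_{\min}^{-(N-4)/(N-2)}$. Taking $b_{**}(a)$ to be this bound, no nontrivial solution can exist once $b>b_{**}(a)$; the same computation shows the exponent is negative when $N=3$, so the argument genuinely requires $N\geq4$.
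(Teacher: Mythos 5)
Your overall route is the paper's own: the fibering decomposition of $\mathcal{N}_V$ with a smallness threshold $b_*(a)$, the bounds $m^->0$ and $m^-<m_\infty^-$ obtained by projecting a positive minimizer of the limit functional $\mathcal{J}_\infty$ onto $\mathcal{N}_V^-$, Brezis--Lieb splitting plus Ekeland's principle and a Lagrange multiplier argument, and a Nehari-identity non-existence bound for $N\geq4$. Your non-existence computation is correct and matches the paper's: your Gagliardo--Nirenberg estimate yielding $bA^{(N-4)/(N-2)}\leq C_5$ together with a $b$-independent lower bound on $A$ is equivalent to the paper's H\"older--Sobolev interpolation giving $b\mathfrak{A}_u^2\leq C_1\mathfrak{A}_u^{2^*/2}$ with $\mathfrak{A}_u\geq C_3$. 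One local error: your fibering claim fails at $p=4$. There $g(t)=(bA^2-C)t^2+(aA+B)$ is monotone, tends to $-\infty$ when $bA^2<C$, and has exactly one positive root (a global maximum of $G_u$, automatically in $\mathcal{N}_V^-$ since on $\mathcal{N}_V$ one has $G_u''(1)=(4-p)bA^2-(p-2)(aA+B)=-2(aA+B)<0$ at $p=4$), not ``no root or exactly two''; the paper treats $p=4$ separately via the set $\mathcal{Q}$. Your later steps survive this, but as written your definition of $b_*(a)$ and of the projection $t^-(u)$ does not cover $p=4$.

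The genuine gap is in the compactness step, where you misidentify the obstruction and then paper over the real one. The cross term $2b\|\nabla u\|_{L^2(\bbr^N)}^2\|\nabla w_n\|_{L^2(\bbr^N)}^2$ is nonnegative, so it only \emph{improves} lower bounds in the energy splitting and can simply be discarded. The actual difficulty is that a nonzero escaping (or weak-limit) profile $w_0$ satisfies only $\mathcal{J}_\infty'(w_0)w_0\leq0$, not $=0$, because the nonlocal coefficient seen in the limit is $a+b\lim_n\mathfrak{A}_{w_n}\geq a+b\mathfrak{A}_{w_0}$; hence $w_0$ need not lie on $\mathcal{N}_\infty^-$, and one cannot assert it ``carries energy at least $m_\infty^-$'' without further work. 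The paper resolves this by a case analysis: if $\mathcal{J}_\infty'(w_0)w_0=0$ it first upgrades to strong $D^{1,2}(\bbr^N)$ convergence and then invokes the uniform estimate $G_{u_n}''(1)\leq-C_1+o_n(1)$ (the minimizing sequence stays a fixed distance from $\mathcal{N}_V^0$ --- itself a nontrivial argument for $b<b_*(a)$, which you only assert parenthetically) to place $w_0$ in $\mathcal{N}_\infty^-$; if $\mathcal{J}_\infty'(w_0)w_0<0$ it re-projects $t_\infty^-w_0\in\mathcal{N}_\infty^-$ with $t_\infty^-<1$ and compares energies along the fiber using the monotonicity of $G_{\cdot,\infty}$, with a further subcase on the sign of the remainder's energy. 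Your substitute mechanism --- an error ``$o(1)$ as $b\to0$'' absorbed by $m^-<m_\infty^-$ --- is not sound as stated: both $m^-$ and $m_\infty^-$ depend on $b$, you have not shown the gap $m_\infty^--m^-$ stays bounded below as $b\to0$, and at a \emph{fixed} $b<b_*(a)$ there is no small parameter left to make such an error small (all errors available are $o_n(1)$ in $n$, not in $b$). To close the argument you must either prove a $b$-uniform positive gap or adopt the fiber re-projection case analysis.
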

\begin{remark}
\begin{enumerate}
\item[$(1)$]  For the case $N=3$, Theorem~\ref{thm1001} can be seen as an improvement of the results in \cite{G15,LY14} to $(\mathcal{P}_{a,b})$ in the sense that we totally remove the conditions $(V_0)$ or $(V_1)$ for $b$ small enough to obtain positive solutions of $(\mathcal{P}_{a,b})$.  To the best of our knowledge, Theorem~\ref{thm1001} is totally new for the cases $N\geq4$.
\item[$(2)$]  By Theorem~\ref{thm1001}, we can see that the Pohozaev type conditions are not needed in finding positive solutions of $(\mathcal{P}_{a,b})$ for the parameter $b$ small enough.  Thus, Theorem~\ref{thm1001} gives a partial answer to the question $(Q_1)$.
\item[$(3)$]  It is also worth to point that in \cite{LLS12}, Li et al. studied the existence of a positive solution for the following autonomous Kirchhoff problem
\begin{eqnarray*}
\left\{\aligned&\bigg(a+b(\int_{\bbr^N}(|\nabla u|^2+\lambda u^2)dx)\bigg)(-\Delta u+\lambda u)=f(u)&\text{ in }\bbr^N,\\%
&u\in\h,\endaligned\right.
\end{eqnarray*}
where $N\geq3$, $a,b>0$ are parameters, $\lambda>0$ is a constant and $f(u)$ is a nonlinearity involving the power-type $|u|^{p-2}u$ for $2<p<2^*$.  By using a truncation argument combined with a monotonicity trick introduced by Jeanjean \cite{J99} (see also Struwe \cite{S85}), the authors proved that such equation has a positive radial solution for $b$ small enough.  Their method is heavily dependent on two facts.  One fact is that $H^1_r(\bbr^N)$ is embedded compactly into $L^q(\bbr^N)$ for $2\leq q<2^*$, where $H^1_r(\bbr^N)=\{u\in\h\mid u\text{ is radial symmetric}\}$.  The other fact is that the order $4$ term $(\|\nabla u\|_{L^2(\bbr^N)}^2+\|u\|_{L^2(\bbr^N)}^2)^2$ can totally control the nonlinearity $f(u)$ by the Sobolev embedding theorem.  Since $D^{1,2}(\bbr^N)$ can not be embedded into $L^p(\bbr^N)$, the order $4$ term $\|\nabla u\|_{L^2(\bbr^N)}^4$ can not control the order $p$ term $\|u\|_{L^p(\bbr^N)}^p$ totally for $(\mathcal{P}_{a,b})$.  On the other hand, since $(\mathcal{P}_{a,b})$ is non-autonomous, $H^1_r(\bbr^N)$ is not a good choice for the variational setting of $(\mathcal{P}_{a,b})$.  Due to these reasons, their method can not be used for $(\mathcal{P}_{a,b})$.
\item[$(4)$]  From the view point of the fibering maps, it seems that there exists another positive solution to $(\mathcal{P}_{a,b})$ for $b$ small enough in the cases $2<p<\min\{4, 2^*\}$, which minimizes the functional $\mathcal{J}_V(u)$ on $\mathcal{N}_V^+$.  However, we actually observe in Theorem~\ref{thm0004} below that for the autonomous form of $(\mathcal{P}_{a,b})$, there exists a unique positive solution in the cases $N=3,4$.  Thus, it seems that $\inf_{\mathcal{N}_V^+}\mathcal{J}_V(u)$ can not be attained in the cases $N=3,4$.  For the cases $N\geq5$, we believe that there exists another positive solution to $(\mathcal{P}_{a,b})$ for $b$ small enough, which minimizes the functional $\mathcal{J}_V(u)$ on $\mathcal{N}_V^+$.  However, since the energy values of bubbles to the $(PS)$ sequence of $\mathcal{J}_V(u)$ may be negative at the energy level $\inf_{\mathcal{N}_V^+}\mathcal{J}_V(u)$, it is hard to exclude the dichotomy case in the concentration-compactness principle (see Lions \cite{L84}).  Due to this reason, we do not obtain the second solution to $(\mathcal{P}_{a,b})$ for $b$ small enough in the cases $N\geq5$.
\item[$(5)$]  The condition $(V)$ can be weaken to some other ones which ensure that the Schr\"odinger operator $-\Delta+V(x)$ is definite.  However, we do not want to go further in that direction.
\item[$(6)$]  Even though the positive solution obtained by Theorem~\ref{thm1001} minimizes the functional $\mathcal{J}_V(u)$ on $\mathcal{N}_V^-$, this solution also may not be a ground state solution for $(\mathcal{P}_{a,b})$, since $\mathcal{N}_V^0$ and $\mathcal{N}_V^+$ may be nonempty sets.  Thus, how to find the ground state solution of $(\mathcal{P}_{a,b})$ without the Pohozaev type conditions as $(V_0)$ or $(V_1)$ is still an interesting question and also open to us.
\end{enumerate}
\end{remark}

On the other hand, Azzollini introduced the scaling technique
\begin{eqnarray*}
u(x)\to u_t(x):=u(tx)
\end{eqnarray*}
to deal with the autonomous Kirchhoff type problem in $\bbr^N$ in \cite{A12,A13}.  Such method is much more simple than the variational method and can be used to establish the relation between the solutions of the autonomous Kirchhoff type problem and that of the related local problem.  Applying Azzollini's scaling technique to the autonomous form of $(\mathcal{P}_{a,b})$, we can easily to obtain the following.
\begin{theorem}\label{thm1002}
Let $N\geq3$, $a,b>0$, $2<p<2^*$ and $V(x)\equiv\lambda>0$.  Then we have the following.
\begin{enumerate}
\item[$(i)$] In the case $N=3$, the autonomous form of $(\mathcal{P}_{a,b})$ has a unique positive radial solution $u_{a,b,\lambda}$ with the expression
\begin{eqnarray}\label{eq2008}
u_{a,b,\lambda}(x)=U_\lambda(tx),
\end{eqnarray}
where $t$ is a positive constant satisfying
\begin{eqnarray*}
at^2+b\bigg(\int_{\bbr^N}|U_{\lambda}|^2dx\bigg)t^{4-N}=1
\end{eqnarray*}
and $U_\lambda$ is the unique positive radial solution of the equation
\begin{eqnarray}\label{eqnew1006}
\left\{\aligned&-\Delta u+\lambda u=|u|^{p-2}u&\text{ in }\bbr^3,\\
&u\in H^1(\bbr^3).\endaligned\right.
\end{eqnarray}
\item[$(ii)$] In the case $N=4$, if $b\int_{\bbr^4}|\nabla U_\lambda|^2dx<1$, then the autonomous form of $(\mathcal{P}_{a,b})$ has a unique positive radial solution $u_{a,b,\lambda}$ with the same expression of \eqref{eq2008}, where $U_\lambda$ is the unique positive radial solution of the equation \eqref{eqnew1006} in $\bbr^4$.  If $b\int_{\bbr^4}|\nabla U_\lambda|^2dx\geq1$, then
             the autonomous form of $(\mathcal{P}_{a,b,\lambda})$ has no solution.
\item[$(iii)$] In the cases $N\geq5$, if $\mathcal{F}_{a,b}(U_\lambda)<1$,
             then the autonomous form of $(\mathcal{P}_{a,b})$ has exact two positive radial solutions $u_{a,b,\lambda}^\pm$ with the same expressions of \eqref{eq2008}, where
             \begin{eqnarray*}
             \mathcal{F}_{a,b}(U_\lambda)=\bigg(\frac{(N-4)b\int_{\bbr^N}|\nabla U_{\lambda}|^2dx}{2}\bigg)^{\frac{2}{N-2}}\frac{(N-2)a^{\frac{N-4}{N-2}}}{N-4}
             \end{eqnarray*}
             and $U_\lambda$ is the unique positive radial solution of the equation \eqref{eqnew1006} in $\bbr^N(N\geq5)$.  If $\mathcal{F}_{a,b}(U_\lambda)=1$,
             then the autonomous form of $(\mathcal{P}_{a,b})$ has a unique positive radial solution $u_{a,b,\lambda}^0$ with same expression of \eqref{eq2008}, where
             $U_\lambda$ is the unique positive radial solution of the equation \eqref{eqnew1006} in $\bbr^N(N\geq5)$.  If $\mathcal{F}_{a,b}(U_\lambda)>1$,
             then the autonomous form of $(\mathcal{P}_{a,b})$ has no solution.
\end{enumerate}
\end{theorem}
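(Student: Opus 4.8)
The plan is to exploit the fact that for any fixed $u$ the nonlocal coefficient $\int_{\bbr^N}|\nabla u|^2dx$ is just a number, so that along a prospective solution the autonomous form of $(\mathcal{P}_{a,b})$ collapses to a local semilinear equation with a rescaled diffusion coefficient. The whole argument then reduces to the uniqueness theory for \eqref{eqnew1006} together with a one-variable analysis of a scaling identity. First I would record the only nontrivial input: the equation \eqref{eqnew1006}, $-\Delta u+\lambda u=|u|^{p-2}u$ on $\bbr^N$, has a \emph{unique} positive radial solution $U_\lambda$ (Kwong, Gidas--Ni--Nirenberg); moreover $U_\lambda$ is smooth, decays exponentially and belongs to $\h$, so that $\int_{\bbr^N}|\nabla U_\lambda|^2dx$ is finite and positive.

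Next I would run Azzollini's scaling in both directions. Substituting the ansatz $u(x)=U_\lambda(tx)$, $t>0$, into the autonomous form of $(\mathcal{P}_{a,b})$, using the identity $\int_{\bbr^N}|\nabla u|^2dx=t^{2-N}\int_{\bbr^N}|\nabla U_\lambda|^2dx$ and the equation solved by $U_\lambda$, a direct computation shows that $U_\lambda(tx)$ is a solution if and only if the effective diffusion coefficient equals $1$, namely
\begin{eqnarray*}
g(t):=at^2+b\bigg(\int_{\bbr^N}|\nabla U_\lambda|^2dx\bigg)t^{4-N}=1.
\end{eqnarray*}
Conversely, if $u$ is any positive radial solution and $M:=\int_{\bbr^N}|\nabla u|^2dx>0$, then $u$ solves $-(a+bM)\Delta u+\lambda u=|u|^{p-2}u$; rescaling space by $t=(a+bM)^{-1/2}$ and invoking the uniqueness of $U_\lambda$ forces $u(x)=U_\lambda(tx)$, and the consistency relation $M=t^{2-N}\int_{\bbr^N}|\nabla U_\lambda|^2dx$ is precisely $g(t)=1$. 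Hence positive radial solutions correspond bijectively to positive roots of $g(t)=1$.

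Finally I would count these roots dimension by dimension. For $N=3$, $g(t)=at^2+b\bigg(\int_{\bbr^N}|\nabla U_\lambda|^2dx\bigg)t$ increases strictly from $0$ to $+\infty$, giving a unique root and case $(i)$. For $N=4$ the exponent $4-N$ vanishes, so $g(t)=at^2+b\int_{\bbr^4}|\nabla U_\lambda|^2dx$ increases strictly from $b\int_{\bbr^4}|\nabla U_\lambda|^2dx$ to $+\infty$; thus a (unique) root exists precisely when $b\int_{\bbr^4}|\nabla U_\lambda|^2dx<1$ and none exists otherwise, which is case $(ii)$. For $N\geq5$ one has $4-N<0$, so $g(t)\to+\infty$ as $t\to0^+$ and as $t\to+\infty$, while $g''>0$ makes $g$ strictly convex with a unique minimizer $t_*=\bigg(\frac{(N-4)b\int_{\bbr^N}|\nabla U_\lambda|^2dx}{2a}\bigg)^{1/(N-2)}$; substituting $t_*$ and simplifying yields $\min_{t>0}g(t)=\mathcal{F}_{a,b}(U_\lambda)$, so $g(t)=1$ has two, one, or no roots according as $\mathcal{F}_{a,b}(U_\lambda)$ is $<1$, $=1$, or $>1$, which is case $(iii)$.

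The crux, and the only delicate point, is the reduction from the nonlocal problem to the scalar equation $g(t)=1$: it rests entirely on the uniqueness of the positive radial solution $U_\lambda$ of \eqref{eqnew1006}. Without uniqueness the scaling would still produce solutions but would not pin down their exact number; the remaining work, in particular the identity $\min_{t>0}g(t)=\mathcal{F}_{a,b}(U_\lambda)$ for $N\geq5$, is elementary calculus.
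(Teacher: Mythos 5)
Your proposal is correct and takes essentially the same route as the paper: Azzollini's scaling ansatz $u(x)=U_\lambda(tx)$, the converse reduction of a positive radial solution to the local equation \eqref{eqnew1006} via Kwong's uniqueness theorem, and the dimension-by-dimension count of the roots of $at^2+b\big(\int_{\bbr^N}|\nabla U_\lambda|^2dx\big)t^{4-N}=1$, including the identity $\min_{t>0}g(t)=\mathcal{F}_{a,b}(U_\lambda)$ for $N\geq5$ (the paper reaches the same one-parameter reduction through the two-parameter scaling $su(tx)$ in Lemmas~\ref{lem0006} and \ref{lemnew0001} and then notes all scalings collapse to $u(tx)$). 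You even correct a misprint: the paper's displayed constraint and \eqref{eqnew0005} write $\int_{\bbr^N}|U_\lambda|^2dx$ where the computation (and your $g$) require $\int_{\bbr^N}|\nabla U_\lambda|^2dx$.
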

Since the autonomous form of $(\mathcal{P}_{a,b})$ can also be studied by the variational method, a natural question for the autonomous form of $(\mathcal{P}_{a,b})$ due to Theorem~\ref{thm1002} is that
\begin{enumerate}
\item[$(Q_2)$] Can the solutions to the autonomous form of $(\mathcal{P}_{a,b})$ founded in Theorem~\ref{thm1002} also be founded by the variational method, that is, does the solutions to the autonomous form of $(\mathcal{P}_{a,b})$ founded by the scaling technique coincide with that founded by the variational method?
\end{enumerate}
Due to the uniqueness of the positive solution to $(\mathcal{P}_{a,b})$ given by Theorem~\ref{thm1002}, this question has a positive answer for the cases $N=3,4$ by the results in \cite{A12}.  However,
since the autonomous form of $(\mathcal{P}_{a,b})$ has two positive solutions for $\mathcal{F}_{a,b}(U_\lambda)<1$ due to Theorem~\ref{thm1002}, this question is still open for the cases $N\geq5$ to the best of our knowledge.  Thus, the second purpose of this paper is to study the question $(Q_2)$ to the autonomous form of $(\mathcal{P}_{a,b})$ for the cases $N\geq5$.

Before we state our results on the question $(Q_2)$, we also need to introduce some notations.  Let
\begin{eqnarray}\label{eq0004}
\mathcal{E}(u)=\frac{b}{4}\|\nabla u\|_{L^2(\bbr^N)}^4+\frac{a}{2}\|\nabla u\|_{L^2(\bbr^N)}^2+\frac{\lambda}{2}\|u\|_{L^2(\bbr^N)}^2-\frac1p\|u\|_{L^p(\bbr^N)}^p.
\end{eqnarray}
Then it is easy to check that $\mathcal{E}(u)$ is of $C^2$ in $\h$ and the critical points of $\mathcal{E}(u)$ are equivalent to the weak solutions to the autonomous form of $(\mathcal{P}_{a,b})$.  Let the Pohozaev type manifold of $\mathcal{E}(u)$ be
\begin{eqnarray}\label{eq0002}
\mathcal{M}=\{u\in\h\backslash\{0\}\mid \Psi(u)=0\},
\end{eqnarray}
where
\begin{eqnarray}
\Psi(u)&=&\frac{N-2}{2N}(a\|\nabla u\|_{L^2(\bbr^N)}^2+b\|\nabla u\|_{L^2(\bbr^N)}^4)\notag\\
&&+\frac{\lambda}{2}\|u\|_{L^2(\bbr^N)}^2-\frac{1}{p}\|u\|_{L^p(\bbr^N)}^p.\label{eq0003}
\end{eqnarray}
Then by the Pohozaev identity of the autonomous form of $(\mathcal{P}_{a,b})$ (see \cite{A12,LY14}), every critical point of $\mathcal{E}(u)$ is contained in $\mathcal{M}$.  Let
\begin{eqnarray*}
F_u(t)&=&\mathcal{E}(u_t)\\
&=&\frac{b}{4}\|\nabla u\|_{L^2(\bbr^N)}^4t^{4-2N}+\frac{a}{2}\|\nabla u\|_{L^2(\bbr^N)}^2t^{2-N}\\
&&+(\frac{\lambda}{2}\|u\|_{L^2(\bbr^N)}^2-\frac{1}{p}\|u\|_{L^p(\bbr^N)}^p)t^{-N},
\end{eqnarray*}
where $u_t(x)=u(tx)$.  Then by a direct calculation, we can see that $F_u(t)$ is of $C^2$ in $\bbr^+$ for every $u\in\h$ and $F_u'(t)=0$ if and only if $u_t\in\mathcal{M}$.  Thus, it is natural to divide the Pohozaev type manifold $\mathcal{M}$ given by \eqref{eq0002} into the following three parts:
\begin{eqnarray}
\mathcal{M}^-&=&\{u\in\mathcal{M}\mid F_u''(1)<0\};\label{eq0005}\\
\mathcal{M}^0&=&\{u\in\mathcal{M}\mid F_u''(1)=0\};\label{eq0006}\\
\mathcal{M}^+&=&\{u\in\mathcal{M}\mid F_u''(1)>0\}.\label{eq0007}
\end{eqnarray}
Now, our second result can be stated as follows.
\begin{theorem}\label{thm0001}
Let $N\geq5$, $a,b>0$, $2<p<2^*$ and $V(x)\equiv\lambda>0$.  Let $U_\lambda$ be the unique positive radial solution of \eqref{eqnew1006} in $\bbr^N(N\geq5)$.  Then we have the following.
\begin{enumerate}
\item[$(i)$] If $\mathcal{F}_{a,b}(U_\lambda)<1$,
             then the autonomous form of $(\mathcal{P}_{a,b})$ has two positive radial solutions $u_{a,b,\lambda}^\pm$.  Moreover, we also have
             \begin{eqnarray*}
             \mathcal{E}(u_{a,b,\lambda}^-)=\inf_{\mathcal{M}^-}\mathcal{E}(u)\quad\text{and}\quad
             \mathcal{E}(u_{a,b,\lambda}^+)=\inf_{\mathcal{M}^+}\mathcal{E}(u)=\inf_{\mathcal{M}}\mathcal{E}(u).
             \end{eqnarray*}
\item[$(ii)$] If $\mathcal{F}_{a,b}(U_\lambda)=1$,
             then the autonomous form of $(\mathcal{P}_{a,b})$ has a positive radial solution $u_{a,b,\lambda}^0$.  Moreover, $\mathcal{M}=\mathcal{M}^0$ and $\mathcal{E}(u_{a,b,\lambda}^0)=\inf_{\mathcal{M}}\mathcal{E}(u)$.
\item[$(iii)$] If $\mathcal{F}_{a,b}(U_\lambda)>1$,
             then the autonomous form of $(\mathcal{P}_{a,b})$ has no solution.
\end{enumerate}
\end{theorem}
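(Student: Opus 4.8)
The plan is to reduce the whole problem to a one-variable analysis on the manifold $\mathcal{M}$ and then to transplant it to the local equation \eqref{eqnew1006} by scaling, thereby avoiding any concentration-compactness argument. First I would simplify $\mathcal{E}$ on $\mathcal{M}$. Writing $X=\|\nabla u\|_{L^2(\bbr^N)}^2$ and using $\Psi(u)=0$ from \eqref{eq0003} to eliminate $\|u\|_{L^p(\bbr^N)}^p$, a direct computation gives
\[
\mathcal{E}(u)=\frac{a}{N}X-\frac{(N-4)b}{4N}X^2=:\phi(X)\qquad\text{for every }u\in\mathcal{M},
\]
a downward parabola (recall $N\ge 5$) whose vertex sits at $X^{*}=\frac{2a}{(N-4)b}$. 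Next I would locate $\mathcal{M}^{\pm}$: substituting the constraint $F_u'(1)=0$ into the expression for $F_u''(1)$ and simplifying, one finds that $F_u''(1)$ has the same sign as $X-X^{*}$. Hence by \eqref{eq0005}--\eqref{eq0007}, $\mathcal{M}^{-}=\{u\in\mathcal{M}:X<X^{*}\}$, $\mathcal{M}^{0}=\{X=X^{*}\}$ and $\mathcal{M}^{+}=\{X>X^{*}\}$, so that $\phi$ is increasing over the $X$-range of $\mathcal{M}^{-}$ and decreasing over that of $\mathcal{M}^{+}$.

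The heart of the proof is to control the range of $X$ over $\mathcal{M}$. Given $u\in\mathcal{M}$, I would scale it onto the Pohozaev manifold of the local problem \eqref{eqnew1006}: there is a unique $\sigma>0$ with $u_\sigma$ on that manifold, and a short computation using $\Psi(u)=0$ shows $\sigma^2=a+bX$. Since $U_\lambda$ is the least-energy, hence $\|\nabla\cdot\|_{L^2(\bbr^N)}^2$-minimizing, solution on that manifold, one has $\|\nabla u_\sigma\|_{L^2(\bbr^N)}^2\ge\|\nabla U_\lambda\|_{L^2(\bbr^N)}^2=:D_0$, which translates into the necessary condition
\[
\psi(X):=X(a+bX)^{\frac{2-N}{2}}\ge D_0 .
\]
One checks that $\psi$ also attains its maximum exactly at $X^{*}$, so $\{\psi\ge D_0\}$ is either empty, the single point $X^{*}$, or a closed interval $[X_-,X_+]$ with $X_-<X^{*}<X_+$, and that these three alternatives correspond precisely to $\mathcal{F}_{a,b}(U_\lambda)>1$, $=1$, $<1$. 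Moreover equality $\psi(X)=D_0$ forces $u_\sigma=U_\lambda$ up to translation (here the uniqueness of $U_\lambda$ is essential), i.e.\ $u$ is a scaling of $U_\lambda$; comparing with the scaling relation in Theorem~\ref{thm1002} identifies these extremal configurations with the solutions $u_{a,b,\lambda}^{\pm}$, at which $X=X_{\pm}$.

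Finally I would assemble the pieces. Because $\mathcal{E}=\phi(X)$ with $\phi$ a downward parabola and every $u\in\mathcal{M}$ has $X\in[X_-,X_+]$, the infimum of $\mathcal{E}$ on each piece is attained at an endpoint: on $\mathcal{M}^{-}$ at $X_-$, realized by $u_{a,b,\lambda}^-$, and on $\mathcal{M}^{+}$ at $X_+$, realized by $u_{a,b,\lambda}^+$. This yields $\mathcal{E}(u_{a,b,\lambda}^-)=\inf_{\mathcal{M}^-}\mathcal{E}$ and $\mathcal{E}(u_{a,b,\lambda}^+)=\inf_{\mathcal{M}^+}\mathcal{E}$, and it handles cases $(ii)$ (where $\mathcal{M}=\mathcal{M}^0=\{X^{*}\}$) and $(iii)$ (where $\mathcal{M}=\varnothing$) simultaneously. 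To obtain $\inf_{\mathcal{M}^+}\mathcal{E}=\inf_{\mathcal{M}}\mathcal{E}$ in case $(i)$ it remains to show $\phi(X_+)\le\phi(X_-)$; since $\phi$ is symmetric about $X^{*}$ this amounts to $X_++X_-\ge 2X^{*}$, which follows from the fact that $\psi$ decays more slowly to the right of $X^{*}$ than it rises to the left, i.e.\ $\psi(X^{*}+s)\ge\psi(X^{*}-s)$ for $0<s<X^{*}$.

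I expect the two main technical points to be this last comparison and the scaling-onto-the-local-manifold step, the latter being what supplies compactness for free and pins down the extremizers via the uniqueness of $U_\lambda$; everything else is elementary one-variable calculus once the reduction $\mathcal{E}|_{\mathcal{M}}=\phi(X)$ and the identification of $\mathcal{M}^{\pm}$ through the threshold $X^{*}$ are in hand.
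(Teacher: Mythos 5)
Your proposal is correct in substance, and all its computations check out against the paper: on $\mathcal{M}$ one indeed has $\mathcal{E}(u)=\frac{a}{N}\mathfrak{A}_u+\frac{(4-N)b}{4N}\mathfrak{A}_u^2$ (this is exactly the function $H$ in the paper's proof of part $(i)$); since $F_u''(1)=\frac{(N-2)(N-4)b}{2}\mathfrak{A}_u\bigl(\mathfrak{A}_u-\frac{2a}{(N-4)b}\bigr)$ on $\mathcal{M}$, your identification of $\mathcal{M}^\pm,\mathcal{M}^0$ through the threshold $X^*$ coincides with Lemma~\ref{lem0003}; the scaling exponent $\sigma^2=a+b\mathfrak{A}_u$ matches Lemma~\ref{lem0002}; $\psi(X)=X(a+bX)^{\frac{2-N}{2}}$ does peak at $X^*$ with $\psi(X^*)\geq\mathfrak{A}_{U_\lambda}$ equivalent to $\mathcal{F}_{a,b}(U_\lambda)\leq1$; and your asymmetry claim holds because $\frac{d}{ds}\bigl[\psi(X^*+s)-\psi(X^*-s)\bigr]=\frac{(N-4)bs}{2}\bigl[(a+bX^*-bs)^{-\frac N2}-(a+bX^*+bs)^{-\frac N2}\bigr]>0$, which yields $X_-+X_+\geq 2X^*$ and hence $\inf_{\mathcal{M}^+}\mathcal{E}=\inf_{\mathcal{M}}\mathcal{E}$. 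However, your route is genuinely different from the paper's: the paper attains $m^\pm$ by direct minimization (Schwartz symmetrization, radial compactness and fiberwise comparison, Lemma~\ref{lem0005}) and obtains criticality through a Lagrange-multiplier/Pohozaev bootstrap (Lemma~\ref{lem0004}), keeping the proof ``pure variational'' so as to answer $(Q_2)$; you instead convert membership in $\mathcal{M}$ into the scalar condition $\psi(\mathfrak{A}_u)\geq\mathfrak{A}_{U_\lambda}$ and import attainment from the scaled solutions of Theorem~\ref{thm1002}. Your version is shorter and gives a sharper structural picture (the exact $\mathfrak{A}$-range of $\mathcal{M}$), but two points deserve explicit justification: your linchpin, that $U_\lambda$ minimizes $\mathfrak{A}$ on $\mathcal{M}^*$, is precisely the paper's \eqref{eq0031} and is not free---it is the Jeanjean--Tanaka characterization of ground states, or else needs the symmetrization argument the paper runs internally, and this is where all the compactness is hidden; and since existence comes from the scaling theorem, your proof establishes the literal statement but forfeits the paper's purpose of showing the variational method independently produces the solutions (a direct verification that $U_\lambda(t_\pm x)$ solves the equation, as the paper does in case $(ii)$, would restore self-containedness). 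Your equality-rigidity step additionally requires that minimizers on $\mathcal{M}^*$ are critical points plus Kwong's uniqueness, but it is in fact unnecessary: case $(ii)$'s $\mathcal{M}=\mathcal{M}^0$ already follows from the necessary condition alone.
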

\begin{remark}
\begin{enumerate}
\item[$(1)$]  The proof of Theorem~\ref{thm0001} is pure variational.  Thus, Theorem~\ref{thm0001} gives a positive answer of the question $(Q_2)$ in the cases $N\geq5$.
\item[$(2)$]  In \cite{A12}, it has been proved that the ground state solution to the autonomous form of $(\mathcal{P}_{a,b})$ in the cases $N=3,4$ also minimizes the energy functional $\mathcal{E}(u)$ on the Pohozaev type manifold $\mathcal{M}$.  However, to the best of our knowledge, such property to the autonomous form of $(\mathcal{P}_{a,b})$ in the cases $N\geq5$ has not been obtained in the literatures.  Now, since every critical point of $\mathcal{E}(u)$ is contained in $\mathcal{M}$ by the Pohozaev identity of the autonomous form of $(\mathcal{P}_{a,b})$, $u_{a,b,\lambda}^+$ and $u_{a,b,\lambda}^0$ obtained by Theorem~\ref{thm0001} must be the the ground state solution to the autonomous form of $(\mathcal{P}_{a,b})$ respectively in the cases $\mathcal{F}_{a,b}(U_\lambda)<1$ and $\mathcal{F}_{a,b}(U_\lambda)=1$.  Thus, by Theorem~\ref{thm0001}, we can see that the ground state solution to the autonomous form of $(\mathcal{P}_{a,b})$ in the cases $N\geq5$ also minimizes the energy functional $\mathcal{E}(u)$ on the Pohozaev type manifold $\mathcal{M}$.
\item[$(3)$] Some other $C^1$-manifolds were used to find the positive ground state solution to the autonomous form of $(\mathcal{P}_{a,b})$ in \cite{G15,LY14} for the case $N=3$.  Such manifolds can be seen as some kinds of the unifications of the Nehari type manifold and the Pohozaev type manifold.  Thus, the energy level of the ground state solution to the autonomous form of $(\mathcal{P}_{a,b})$ in the case $N=3$ has some other expressions in the view point of the calculus of variation.  However, such manifolds are not good choices for the high dimensions $(N\geq4)$ due to the fact that $D^{1,2}(\bbr^N)$ can not be embedded into $L^p(\bbr^N)$.  Indeed, if we consider such manifolds, then we will trap in the trouble that we can not describe the manifold very clear for all $a,b>0$ in the high dimensions $(N\geq4)$.  Thus, we can not describe the positive solution to the autonomous form of $(\mathcal{P}_{a,b})$ totally as Theorem~\ref{thm0001}.
\end{enumerate}
\end{remark}

In \cite{CR92,CL97}, Chipot et al. introduced another scaling technique $u(x)\to tu(x)$ to deal with the elliptic equations of the Kirchhoff type with power-type nonlinearity (see also \cite{A13,ACM05,HLW15,LLT152}).  This method was further developed in our previous paper \cite{WHL15}.  Note that the nonlinearity to the autonomous form of $(\mathcal{P}_{a,b})$ is also power-type.  Thus, the autonomous form of $(\mathcal{P}_{a,b})$ also can be studied by the scaling technique $u(x)\to tu(x)$.  Due to this fact, the following question is also natural.
\begin{enumerate}
\item[$(Q_3)$]  What is the relation between the two differential scaling technique for the autonomous form of $(\mathcal{P}_{a,b})$?
\end{enumerate}
In order to study the question $(Q_3)$, we introduce a more general scaling technique $u\to su(tx)$, $s,t>0$, which can be seen as a unification of the two differential scaling technique used in the literatures.  By such scaling technique, we observe the following.
\begin{theorem}\label{thm0002}
Let $N\geq3$, $a,b>0$, $2<p<2^*$ and $V(x)\equiv\lambda>0$.  Then the solution of the autonomous form of $(\mathcal{P}_{a,b})$ must be of the form $sU_{\frac{\lambda}{s^{p-2}}}(tx)$, where $U_{\frac{\lambda}{s^{p-2}}}$ is the unique positive radial solution of \eqref{eqnew1006} for $\lambda=\frac{\lambda}{s^{p-2}}$ in $\bbr^N(N\geq3)$, $s$ and $t$ satisfy $\frac{t}{s^{\frac{p-2}{2}}}=\gamma>0$ and $\gamma$ is the solution of the following equation
\begin{eqnarray}\label{eqnew0005}
a\gamma^2+b\bigg(\int_{\bbr^N}|U_{\lambda}|^2dx\bigg)\gamma^{4-N}=1.
\end{eqnarray}
Moreover, the number of positive solutions to the autonomous form of $(\mathcal{P}_{a,b})$ equals to the number of solutions to the equation \eqref{eqnew0005}.
\end{theorem}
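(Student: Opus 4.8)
The plan is to reduce the nonlocal problem to a one–parameter family of constant–coefficient local equations and then classify these by the known uniqueness theory. The first observation I would record is that the apparently two–parameter scaling collapses to a single parameter. Fix $s,t>0$ and set $\gamma=t/s^{\frac{p-2}{2}}$. From the scale invariance of \eqref{eqnew1006} one has the elementary identity $U_{\mu}(x)=(\mu/\lambda)^{\frac{1}{p-2}}U_\lambda\big((\mu/\lambda)^{1/2}x\big)$; taking $\mu=\lambda/s^{p-2}$ gives $U_{\lambda/s^{p-2}}(x)=s^{-1}U_\lambda\big(s^{-\frac{p-2}{2}}x\big)$ and hence
$$sU_{\frac{\lambda}{s^{p-2}}}(tx)=U_\lambda(\gamma x).$$
Thus every function of the asserted form depends on $(s,t)$ only through $\gamma$, and it suffices to analyze $w_\gamma(x):=U_\lambda(\gamma x)$. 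Substituting $w_\gamma$ into the autonomous form of $(\mathcal{P}_{a,b})$, computing the Dirichlet integral of $w_\gamma$ by the change of variables $y=\gamma x$, and using that $U_\lambda$ solves \eqref{eqnew1006}, the linear term and the power term match simultaneously exactly when the Kirchhoff coefficient satisfies $\big(a+b\int_{\bbr^N}|\nabla w_\gamma|^2\,dx\big)\gamma^2=1$, which is precisely \eqref{eqnew0005}. Hence each positive root $\gamma$ of \eqref{eqnew0005} produces a positive radial solution $w_\gamma=sU_{\lambda/s^{p-2}}(tx)$.

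Next I would prove the converse, that every positive solution has this form. Let $u$ be any positive solution of the autonomous form of $(\mathcal{P}_{a,b})$ and set $A:=a+b\int_{\bbr^N}|\nabla u|^2\,dx>0$, which is a fixed constant once $u$ is given. Then $u$ solves the local Schr\"odinger–type equation $-A\Delta u+\lambda u=|u|^{p-2}u$ in $\bbr^N$. By standard elliptic regularity, positivity and exponential decay, together with the symmetry result of Gidas--Ni--Nirenberg and Kwong's uniqueness theorem for positive solutions of $-\Delta v+cv=|v|^{p-2}v$, the function $u$ is, up to a translation, the unique positive radial solution of this local equation. A one–line scaling computation (the same matching as above, now with $A\gamma^2=1$, i.e.\ $\gamma=A^{-1/2}$) identifies this unique radial solution with $U_\lambda(\gamma x)=sU_{\lambda/s^{p-2}}(tx)$, and the self–consistency relation $A=a+b\int_{\bbr^N}|\nabla w_\gamma|^2\,dx$ forces $\gamma$ to satisfy \eqref{eqnew0005}. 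This establishes the representation claim.

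For the counting statement I would observe that the two constructions are mutually inverse: a positive solution determines $\gamma=A^{-1/2}$, a positive root of \eqref{eqnew0005}; a root $\gamma$ determines the solution $U_\lambda(\gamma x)$; and since $w_\gamma=w_{\gamma'}$ forces $\gamma=\gamma'$, distinct roots give distinct solutions. Counting positive radial solutions (equivalently, positive solutions up to translation, in the sense of Theorem~\ref{thm1002}), the number of positive solutions of the autonomous form of $(\mathcal{P}_{a,b})$ therefore equals the number of positive solutions of \eqref{eqnew0005}.

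The hard part will be the second step, namely justifying that an arbitrary positive solution of the nonlocal problem is, up to translation, the unique positive radial solution of its associated constant–coefficient local equation; this is where the deep input — Gidas--Ni--Nirenberg symmetry together with Kwong's uniqueness — enters, and one must also supply the decay and regularity needed to invoke them. Everything else is elementary scaling bookkeeping, and the only point requiring genuine care is verifying that the two–parameter scaling really collapses to the single parameter $\gamma$, so that the count is governed solely by the roots of \eqref{eqnew0005}.
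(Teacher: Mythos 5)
Your proposal is correct and takes essentially the same route as the paper: the paper's Lemma~\ref{lem0006} freezes the Kirchhoff coefficient $a+b\int_{\bbr^N}|\nabla u|^2dx$ and reduces to the local equation exactly as you do (your explicit appeal to Gidas--Ni--Nirenberg symmetry plus Kwong's uniqueness is precisely the input the paper compresses into the phrase ``by the uniqueness of $U_\alpha$''), while Lemma~\ref{lemnew0001} carries out the same scaling bookkeeping, with your collapse $sU_{\lambda/s^{p-2}}(tx)=U_\lambda(\gamma x)$ appearing in the paper as \eqref{eqnew0010}. One remark: your matching condition correctly yields $a\gamma^2+b\big(\int_{\bbr^N}|\nabla U_{\lambda}|^2dx\big)\gamma^{4-N}=1$, which is the intended form of \eqref{eqnew0005}; the printed equation has $|U_\lambda|^2$ where it should have $|\nabla U_\lambda|^2$, a typo confirmed by the computation in Lemma~\ref{lemnew0001} and by the conditions $b\int_{\bbr^N}|\nabla U_\lambda|^2dx<1$ and $\mathcal{F}_{a,b}(U_\lambda)<1$ in Theorem~\ref{thm1002}.
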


\begin{remark}
Theorem~\ref{thm0002} gives all expressions of the solutions to the autonomous form of $(\mathcal{P}_{a,b})$ obtained by the scaling technique.  Furthermore, note that it is well known that $U_{\lambda}(x)=\lambda^{\frac{1}{p-2}}U_1(\sqrt{\lambda}x)$.  Thus, we must have
\begin{eqnarray}\label{eqnew0010}
sU_{\frac{\lambda}{s^{p-2}}}(tx)=\lambda^{\frac{1}{p-2}}U_1((\frac{\lambda}{s^{p-2}})^{\frac12}tx)=U_\lambda(\frac{1}{s^{\frac{p-2}{2}}}tx)
=U_\lambda(\gamma x).
\end{eqnarray}
It follows that all scaling technique coincide with the special one $u(x)\to u(tx)$ for the autonomous form of $(\mathcal{P}_{a,b})$ due to Theorem~\ref{thm1002}, which also gives the answer to the question $(Q_3)$.
\end{remark}

Combing Theorems~\ref{thm0001}--\ref{thm0002} and the results in \cite{A12,A13}, we can obtain the following.
\begin{theorem}\label{thm0004}
Let $N\geq3$£¬ $a,b>0$, $2<p<2^*$ and $V(x)\equiv\lambda>0$.  Then we have the following.
\begin{enumerate}
\item[$(i)$] In the case $N=3$, the autonomous form of $(\mathcal{P}_{a,b})$ has a unique positive radial solution $u_{a,b,\lambda}$ with the expression
\begin{eqnarray}\label{eq0008}
u_{a,b,\lambda}(x)=\bigg(\frac{\lambda}{\alpha}\bigg)^{\frac{1}{p-2}}U_\alpha(tx),
\end{eqnarray}
where $\alpha$ and $t$ are two positive constants satisfying $\bigg(\frac{\alpha}{\lambda}\bigg)^{\frac12}t=\gamma>0$ and $\gamma$ is the solution of \eqref{eqnew0005}, moreover, $u_{a,b,\lambda}$ is the ground state solution and $\mathcal{E}(u_{a,b,\lambda})=\inf_{\mathcal{M}}\mathcal{E}(u)$.
\item[$(ii)$] In the case $N=4$, if $b\int_{\bbr^N}|\nabla U_\lambda|^2dx<1$, then the autonomous form of $(\mathcal{P}_{a,b})$ has a unique positive radial solution $u_{a,b,\lambda}$ with the same expression of \eqref{eq0008}, moreover, $u_{a,b,\lambda}$ is the ground state solution and $\mathcal{E}(u_{a,b,\lambda})=\inf_{\mathcal{M}}\mathcal{E}(u)$; if $b\int_{\bbr^N}|\nabla U_\lambda|^2dx\geq1$, then
             the autonomous form of $(\mathcal{P}_{a,b})$ has no solution.
\item[$(iii)$] In the cases $N\geq5$, if $\mathcal{F}_{a,b}(U_\lambda)<1$,
             then the autonomous form of $(\mathcal{P}_{a,b})$ has exact two positive radial solutions $u_{a,b,\lambda}^\pm$ with the same expressions of \eqref{eq0008}, moreover, $u_{a,b,\lambda}^+$ is the ground state solution and $\mathcal{E}(u_{a,b,\lambda}^-)=\inf_{\mathcal{M}^-}\mathcal{E}(u)$ and $\mathcal{E}(u_{a,b,\lambda}^+)=\inf_{\mathcal{M}^+}\mathcal{E}(u)=\inf_{\mathcal{M}}\mathcal{E}(u)$;
             if $\mathcal{F}_{a,b}(U_\lambda)=1$,
             then the autonomous form of $(\mathcal{P}_{a,b})$ has a unique positive radial solution $u_{a,b,\lambda}^0$ with same expression of \eqref{eq0008}, moreover, $u_{a,b,\lambda}^0$ is the ground state solution and $\mathcal{M}=\mathcal{M}^0=\{u_{a,b,\lambda}^0\}$; if $\mathcal{F}_{a,b}(U_\lambda)>1$,
             then the autonomous form of $(\mathcal{P}_{a,b})$ has no solution.
\end{enumerate}
\end{theorem}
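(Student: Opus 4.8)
The plan is to obtain Theorem~\ref{thm0004} by assembling the scaling description from Theorems~\ref{thm1002} and \ref{thm0002} with the variational/ground-state information supplied by Theorem~\ref{thm0001} (for $N\ge5$) and by the results of \cite{A12,A13} (for $N=3,4$). The existence--multiplicity--nonexistence trichotomy and the explicit form of the solutions are essentially already contained in Theorems~\ref{thm1002} and \ref{thm0002}; the only genuinely new items are the \emph{energy} characterizations in each regime and the verification that the two expressions \eqref{eq2008} and \eqref{eq0008} describe the same functions.

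First I would settle the counting and the form. By Theorem~\ref{thm0002} every positive solution equals $sU_{\lambda/s^{p-2}}(tx)$ with $t/s^{(p-2)/2}=\gamma$ a positive root of \eqref{eqnew0005}, and the number of solutions equals the number of such roots. Writing $g(\gamma)=a\gamma^2+b\|\nabla U_\lambda\|_{L^2(\bbr^N)}^2\gamma^{4-N}$, the equation to solve is $g(\gamma)=1$: for $N=3$ the map $g$ is strictly increasing from $0$ to $+\infty$, so there is exactly one root; for $N=4$ the middle exponent vanishes and $g(\gamma)=1$ reduces to $a\gamma^2=1-b\|\nabla U_\lambda\|_{L^2(\bbr^N)}^2$, which has a (unique) positive root precisely when $b\int_{\bbr^N}|\nabla U_\lambda|^2dx<1$; for $N\ge5$ one has $g\to+\infty$ at both ends with a unique interior minimizer, and a direct computation shows its minimum value equals $\mathcal{F}_{a,b}(U_\lambda)$, so $g(\gamma)=1$ has two, one, or no roots according as $\mathcal{F}_{a,b}(U_\lambda)<1$, $=1$, or $>1$. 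Finally the identity \eqref{eqnew0010}, $sU_{\lambda/s^{p-2}}(tx)=(\lambda/\alpha)^{1/(p-2)}U_\alpha(tx)=U_\lambda(\gamma x)$ with $(\alpha/\lambda)^{1/2}t=\gamma$, rewrites each solution in the form \eqref{eq0008}, reproducing all three cases.

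Next I would add the energy statements. For $N=3,4$ I combine the uniqueness just established with \cite{A12,A13}: there the ground state is shown to minimize $\mathcal{E}$ over the Pohozaev manifold $\mathcal{M}$; since any ground state is positive, hence radial, and positive radial solutions are unique by the previous step, the unique solution $u_{a,b,\lambda}$ must be this ground state, so $\mathcal{E}(u_{a,b,\lambda})=\inf_{\mathcal{M}}\mathcal{E}(u)$. For $N\ge5$ the content is exactly Theorem~\ref{thm0001}: it already gives $\mathcal{E}(u_{a,b,\lambda}^-)=\inf_{\mathcal{M}^-}\mathcal{E}$ and $\mathcal{E}(u_{a,b,\lambda}^+)=\inf_{\mathcal{M}^+}\mathcal{E}=\inf_{\mathcal{M}}\mathcal{E}$ in the case $\mathcal{F}_{a,b}(U_\lambda)<1$, and $\mathcal{M}=\mathcal{M}^0$ with $\mathcal{E}(u_{a,b,\lambda}^0)=\inf_{\mathcal{M}}\mathcal{E}$ in the case $\mathcal{F}_{a,b}(U_\lambda)=1$. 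Because the Pohozaev identity forces every critical point of $\mathcal{E}$ onto $\mathcal{M}$, the minimizer over $\mathcal{M}$ is automatically a ground state, which identifies $u_{a,b,\lambda}^+$ (resp. $u_{a,b,\lambda}^0$) as claimed.

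The one step that is more than bookkeeping --- and the heart of question $(Q_2)$ --- is checking that the solutions produced by the scaling technique coincide with the variational critical points of Theorem~\ref{thm0001}, correctly sorted into $\mathcal{M}^\pm$. I would do this through the fibering map $F_{U_\lambda}(t)=\mathcal{E}(U_\lambda(t\cdot))$: using the Nehari and Pohozaev identities for $U_\lambda$ one gets $F_{U_\lambda}'(t)=\tfrac{N-2}{2}\|\nabla U_\lambda\|_{L^2(\bbr^N)}^2\,t^{-N-1}(1-g(t))$, so the critical points of $F_{U_\lambda}$ are precisely the roots $\gamma$ of $g(\gamma)=1$, i.e. the scaling solutions; since the number of solutions matches and all critical points lie on $\mathcal{M}$, the two families are the same. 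Moreover $F_{(U_\lambda)_\gamma}(s)=F_{U_\lambda}(\gamma s)$ shows that the sign of the second derivative defining $\mathcal{M}^\pm$ at $U_\lambda(\gamma\,\cdot)$ equals the sign of $-g'(\gamma)$, so the larger root lands in $\mathcal{M}^-$ and the smaller root in $\mathcal{M}^+$, which fixes the labeling of $u_{a,b,\lambda}^\pm$ and is consistent with the energy characterization in Theorem~\ref{thm0001}. I expect this reconciliation, rather than any single estimate, to be the only delicate point; everything else is a case-by-case transcription across $N=3$, $N=4$ and $N\ge5$.
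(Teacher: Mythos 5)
Your proposal is correct and follows essentially the same route as the paper, which likewise counts the positive roots of \eqref{eqnew0005} case by case in $N=3$, $N=4$, $N\geq5$ and then invokes Theorems~\ref{thm0001} and \ref{thm0002} together with \cite{A12,A13}. The extra details you supply --- verifying that the interior minimum of $g(\gamma)=a\gamma^2+b\|\nabla U_\lambda\|_{L^2(\bbr^N)}^2\gamma^{4-N}$ equals $\mathcal{F}_{a,b}(U_\lambda)$, and using $F_{U_\lambda}'(t)=\frac{N-2}{2}\|\nabla U_\lambda\|_{L^2(\bbr^N)}^2 t^{-N-1}(1-g(t))$ to sort the two roots into $\mathcal{M}^\pm$ --- merely make explicit what the paper compresses into ``a direct observation'' and what is already implicit in Lemma~\ref{lem0001} and Remark~\ref{rmk0001} (you also, correctly, read the coefficient in \eqref{eqnew0005} as $\|\nabla U_\lambda\|_{L^2(\bbr^N)}^2$, consistent with the derivation in Lemma~\ref{lemnew0001}).
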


\begin{remark}
\begin{enumerate}
\item[$(1)$]  To the best of our knowledge, Theorem~\ref{thm0004} is the first result which describe the positive solutions to the autonomous form of $(\mathcal{P}_{a,b})$ totally.  Thus, Theorems~\ref{thm0001}--\ref{thm0002} can be seen as a complement of the results in \cite{A12,A13} for the autonomous form of $(\mathcal{P}_{a,b})$.
\item[$(2)$]  By making some further observations on the function \eqref{eqnew0005}, we can obtain some concentration behaviors of the positive solutions to the autonomous form of $(\mathcal{P}_{a,b})$ for the parameters $a,b$ due to the precise expressions given by Theorem~\ref{thm0004}.  However, we do not want to go further in that direction in the current paper.
\end{enumerate}
\end{remark}

Through this paper, $o_n(1)$ will always denote the quantities tending to zero as $n\to\infty$ and $C_i$ will denote the positive constants which may be different and independent of the parameter $b$.  For the sake of simplicity, we respectively denote $\|\nabla u\|_{L^2(\bbr^N)}^2$, $\|u\|_{L^p(\bbr^N)}^p$, $\|u\|_{L^2(\bbr^N)}^2$ and $\int_{\bbr^N}V(x)u^2dx$ by $\mathfrak{A}_u$, $\mathfrak{B}_u$, $\mathfrak{C}_u$ and $\mathfrak{C}_{u,V}$ in the remaining of this paper.

\section{The autonomous case}
\subsection{The Pohozaev manifold $\mathcal{M}$ in $N\geq5$}
As we stated in the introduction, the fibering map $F_u(t)=\mathcal{E}(u_t)$ can be used to observe the the Pohozaev manifold $\mathcal{M}$ and the divisions $\mathcal{M}^\pm, \mathcal{M}^0$, where $u_t(x)=u(tx)$.  Let
\begin{eqnarray*}
\mathcal{C}&=&\{u\in\h\backslash\{0\}\mid\frac1p\mathfrak{B}_u-\frac\lambda2\mathfrak{C}_u>0\}
\end{eqnarray*}
and
\begin{eqnarray*}
\mathcal{M}^*&=&\{u\in\h\backslash\{0\}\mid\frac{N-2}{2N}\mathfrak{A}_u+\frac\lambda2\mathfrak{C}_u-\frac1p\mathfrak{B}_u=0\}.
\end{eqnarray*}
Then we have the following.
\begin{lemma}\label{lem0002}
For every $u\in\h\backslash\{0\}$, there exists a unique $t>0$ such that $u_t=u(tx)\in\mathcal{M}^*$ in the case $u\in\mathcal{C}$ while $u_t=u(tx)\not\in\mathcal{M}$ for all $t>0$ in the case $u\not\in\mathcal{C}$.  Moreover, if $u\in\mathcal{C}$, then $I(u_t)=\max_{s>0}I(u_s)$ and $I(u_s)$ is strictly increasing on $(0, t)$ and strictly decreasing on $(t, +\infty)$, , where $I(u)$ is the corresponding functional of \eqref{eqnew1006} and given by
$I(u)=\frac12\|\nabla u\|_{L^2(\bbr^N)}^2+\frac{\lambda}{2}\|u\|_{L^2(\bbr^N)}^2-\frac{1}{p}\|u\|_{L^p(\bbr^N)}^p$.
\end{lemma}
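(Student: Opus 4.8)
The plan is to carry out a standard fibering-map analysis of the map $t\mapsto I(u_t)$, keeping careful track of the distinction between the Pohozaev manifold $\mathcal{M}^*$ of the local functional $I$ and the full Pohozaev manifold $\mathcal{M}$ of $\mathcal{E}$. First I would record how the dilation $u_t(x)=u(tx)$ acts on the three basic quantities: a change of variables $y=tx$ gives $\mathfrak{A}_{u_t}=t^{2-N}\mathfrak{A}_u$, $\mathfrak{C}_{u_t}=t^{-N}\mathfrak{C}_u$ and $\mathfrak{B}_{u_t}=t^{-N}\mathfrak{B}_u$. Substituting these into $I$ yields
\[
I(u_t)=\tfrac12 t^{2-N}\mathfrak{A}_u+t^{-N}\Big(\tfrac{\lambda}{2}\mathfrak{C}_u-\tfrac1p\mathfrak{B}_u\Big),
\]
a $C^2$ function of $t$ on $\bbr^+$. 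Differentiating and factoring out $t^{-N-1}>0$, I would show $\frac{d}{dt}I(u_t)=-Nt^{-1}\Psi^*(u_t)$, where $\Psi^*(u)=\frac{N-2}{2N}\mathfrak{A}_u+\frac{\lambda}{2}\mathfrak{C}_u-\frac1p\mathfrak{B}_u$ is precisely the functional defining $\mathcal{M}^*$. Consequently $\frac{d}{dt}I(u_t)=0$ if and only if $u_t\in\mathcal{M}^*$, which reduces the whole statement to locating the critical points of $t\mapsto I(u_t)$.

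For $u\in\mathcal{C}$, set $D:=\frac1p\mathfrak{B}_u-\frac{\lambda}{2}\mathfrak{C}_u>0$. Then $\frac{d}{dt}I(u_t)=t^{-N-1}\big[\frac{2-N}{2}\mathfrak{A}_u\,t^2+ND\big]$, and since $N\geq5$ the bracket is a strictly decreasing function of $t^2$ (note $\mathfrak{A}_u>0$ because $u\neq0$ in $\h$), positive at $t=0^+$ and negative for $t$ large. Hence it has a unique positive zero
\[
t_0=\Big(\tfrac{2ND}{(N-2)\mathfrak{A}_u}\Big)^{1/2},
\]
with $\frac{d}{dt}I(u_t)>0$ on $(0,t_0)$ and $<0$ on $(t_0,+\infty)$. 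This simultaneously gives the uniqueness of $t$ with $u_t\in\mathcal{M}^*$, the fact that $t_0$ is a strict global maximum of $s\mapsto I(u_s)$, and the asserted monotonicity; the explicit $t_0$ is the value produced by solving $\Psi^*(u_{t_0})=0$.

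For $u\not\in\mathcal{C}$, i.e. $\frac{\lambda}{2}\mathfrak{C}_u-\frac1p\mathfrak{B}_u\geq0$, I would instead test membership in the full manifold $\mathcal{M}$. Computing $\Psi$ along the dilation and multiplying by $t^N>0$ gives
\[
t^N\Psi(u_t)=\tfrac{N-2}{2N}\big(a\,t^2\mathfrak{A}_u+b\,t^{4-N}\mathfrak{A}_u^2\big)+\Big(\tfrac{\lambda}{2}\mathfrak{C}_u-\tfrac1p\mathfrak{B}_u\Big).
\]
Every term on the right is nonnegative and the first is strictly positive for all $t>0$ (again using $\mathfrak{A}_u>0$), so $\Psi(u_t)>0$ for all $t>0$ and thus $u_t\not\in\mathcal{M}$, completing the proof. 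The only point demanding care — and the spot where the hypothesis $N\geq5$ and the sign of $\frac{\lambda}{2}\mathfrak{C}_u-\frac1p\mathfrak{B}_u$ genuinely enter — is keeping the two manifolds $\mathcal{M}^*$ and $\mathcal{M}$ straight: the positive answer is stated for $\mathcal{M}^*$, whereas the non-membership answer must be established for the larger $\mathcal{M}$, and it is exactly the extra strictly positive Kirchhoff term $b\,t^{4-N}\mathfrak{A}_u^2$ together with the sign of the bracket that forces $\Psi(u_t)$ to remain positive.
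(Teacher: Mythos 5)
Your proof is correct and follows essentially the same route as the paper: both analyze the fibering map $t\mapsto I(u_t)$, factor its derivative as $-Nt^{-1}\Psi^*(u_t)$ (the paper writes $L_u'(t)=Nt^{-N-1}\big(\frac1p\mathfrak{B}_u-\frac{\lambda}{2}\mathfrak{C}_u-\frac{N-2}{2N}\mathfrak{A}_u t^2\big)$, the same factorization), and locate its unique positive zero $t_0=\big(\frac{2N(\frac1p\mathfrak{B}_u-\frac{\lambda}{2}\mathfrak{C}_u)}{(N-2)\mathfrak{A}_u}\big)^{1/2}$ to get existence, uniqueness, and the monotonicity of $I(u_s)$. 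If anything, your direct check that $t^N\Psi(u_t)=\frac{N-2}{2N}(at^2\mathfrak{A}_u+bt^{4-N}\mathfrak{A}_u^2)+(\frac{\lambda}{2}\mathfrak{C}_u-\frac1p\mathfrak{B}_u)>0$ for $u\notin\mathcal{C}$ is more explicit than the paper's treatment of that case (the paper only derives $L_u'(t)<0$, which by itself addresses $\mathcal{M}^*$ rather than $\mathcal{M}$); note only that your monotonicity step requires just $N\geq3$, not $N\geq5$, the sign $t^{4-N}\mathfrak{A}_u^2>0$ holding for every $N$.
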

\begin{proof}
Let $u\in \h\backslash\{0\}$ and consider the fibering map $L_u(t)=I(u_t)$, where $u_t=u(tx)$.  By a direct calculation, we can see that
\begin{eqnarray}\label{eq0013}
L_u'(t)=Nt^{-N-1}(\frac1p\mathfrak{B}_u-\frac\lambda2\mathfrak{C}_u-\frac{N-2}{2N}\mathfrak{A}_ut^2).
\end{eqnarray}
Clearly, there exists a unique $t>0$ such that $L_u'(t)=0$ in the case $u\in\mathcal{C}$; while $L_u'(t)<0$ for all $t>0$ in the case $u\not\in\mathcal{C}$.  It follows from $t^{-N}\mathfrak{B}_u=\mathfrak{B}_{u_t}$, $t^{-N}\mathfrak{C}_u=\mathfrak{C}_{u_t}$ and $t^{2-N}\mathfrak{A}_u=\mathfrak{A}_{u_t}$ that $L_u'(t)=t^{-1}L_{u_t}'(1)$, which implies that there exists a unique $t>0$ such that $u_t=u(tx)\in\mathcal{M}^*$ in the case $u\in\mathcal{C}$; while $u_t=u(tx)\not\in\mathcal{M}$ for all $t>0$ in the case $u\not\in\mathcal{C}$.  Now, by \eqref{eq0013}, we can see that if $u\in\mathcal{C}$, then $I(u_t)=\max_{s>0}I(u_s)$ and $I(u_s)$ is strictly increasing on $(0, t)$ and strictly decreasing on $(t, +\infty)$.
\end{proof}

Let
\begin{eqnarray}
&\mathcal{B}_{-}=\{u\in\mathcal{C}\mid \mathcal{B}(u)<1\};\label{eq0022}\\
&\mathcal{B}_{0}=\{u\in\mathcal{C}\mid \mathcal{B}(u)=1\};\label{eqnew0011}\\
&\mathcal{B}_{+}=\{u\in\mathcal{C}\mid \mathcal{B}(u)>1\},\label{eqnew0012}
\end{eqnarray}
where
\begin{eqnarray}
\mathcal{B}(u)=\frac{a^{\frac{N-4}{N-2}}b^{\frac{2}{N-2}}\mathfrak{A}_u^{\frac{N}{N-2}}}
{2^{\frac{N}{N-2}}N(N-4)^{\frac{N-4}{N-2}}(\frac1p\mathfrak{B}_u-\frac\lambda2\mathfrak{C}_u)}.\label{eq0023}
\end{eqnarray}
Then our first observation on $\mathcal{M}$ can be stated as follows.
\begin{lemma}\label{lem0001}
Let $N\geq5$.  Then we have the following.
\begin{enumerate}
\item[$(1)$] For every $u\in\mathcal{B}_{-}$, there exist unique $0<t_+<t_-$ such that $u_{t,-}=u(t_-x)\in\mathcal{M}^-$ and $u_{t,+}=u(t_+x)\in\mathcal{M}^+$, where $\mathcal{M}^\pm$ are respectively given by \eqref{eq0005} and \eqref{eq0007}.
\item[$(2)$] For every $u\in\mathcal{B}_{0}$, there exists a unique $t>0$ such that $u_t=u(tx)\in\mathcal{M}^0$, where $\mathcal{M}^0$ is given by \eqref{eq0006}.
\item[$(3)$]  For every $u\in\mathcal{B}_{+}$, $u_t=u(tx)\not\in\mathcal{M}$ for all $t>0$.
\end{enumerate}
\end{lemma}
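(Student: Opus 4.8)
The plan is to run the fibering-map analysis of Lemma~\ref{lem0002} again, but this time retaining the quartic term $\frac{b}{4}\mathfrak{A}_u^2$, since it is precisely this term that generates the nontrivial geometry of $F_u$ in high dimensions. Using the scaling relations $\mathfrak{A}_{u_t}=t^{2-N}\mathfrak{A}_u$, $\mathfrak{B}_{u_t}=t^{-N}\mathfrak{B}_u$, $\mathfrak{C}_{u_t}=t^{-N}\mathfrak{C}_u$ together with $u_t\in\mathcal{M}\iff F_u'(t)=0$, a direct differentiation yields
\begin{eqnarray*}
F_u'(t)=t^{-N-1}\big(N\beta-h(t)\big),\qquad \beta:=\tfrac1p\mathfrak{B}_u-\tfrac{\lambda}{2}\mathfrak{C}_u,
\end{eqnarray*}
where $h(t):=\frac{(N-2)b}{2}\mathfrak{A}_u^2\,t^{4-N}+\frac{(N-2)a}{2}\mathfrak{A}_u\,t^2$. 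The hypothesis $u\in\mathcal{C}$ is exactly the condition $\beta>0$, which is what makes the equation $h(t)=N\beta$ potentially solvable; thus the whole statement reduces to a one-variable analysis of $h$.

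Next I would study $h$ on $(0,+\infty)$. For $N\geq5$ one has $4-N<0$, so both monomials $t^{4-N}$ and $t^2$ are strictly convex with positive coefficients; hence $h$ is strictly convex, with $h(t)\to+\infty$ both as $t\to0^+$ (from the $t^{4-N}$ term) and as $t\to+\infty$ (from the $t^2$ term). Consequently $h$ has a unique minimiser $t_0$, characterised by $h'(t_0)=0$, i.e. $t_0^{N-2}=\frac{(N-4)b\mathfrak{A}_u}{2a}$, and $h$ is strictly decreasing on $(0,t_0)$ and strictly increasing on $(t_0,+\infty)$. Therefore $h(t)=N\beta$ has exactly two roots $t_+<t_0<t_-$ when $h(t_0)<N\beta$, exactly one root $t_0$ when $h(t_0)=N\beta$, and no root when $h(t_0)>N\beta$. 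This is the trichotomy that will match $\mathcal{B}_-,\mathcal{B}_0,\mathcal{B}_+$, and it already delivers $t_+<t_-$ in case $(1)$.

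To read off membership in $\mathcal{M}^{\pm}$ and $\mathcal{M}^0$ I would use the composition identity $F_{u_t}(s)=F_u(ts)$, which gives $F_{u_t}''(1)=t^2F_u''(t)$, so the class of $u_t$ (in the sense of \eqref{eq0005}--\eqref{eq0007}) is governed by the sign of $F_u''(t)$. Differentiating $F_u'(t)=t^{-N-1}(N\beta-h(t))$ and evaluating at a root $t^*$, where $N\beta=h(t^*)$, gives the clean formula $F_u''(t^*)=-(t^*)^{-N-1}h'(t^*)$. Hence at $t_-$, where $h'>0$, we get $F_u''(t_-)<0$, so $u(t_-x)\in\mathcal{M}^-$; at $t_+$, where $h'<0$, we get $F_u''(t_+)>0$, so $u(t_+x)\in\mathcal{M}^+$; and at the degenerate root $t_0$, where $h'(t_0)=0$, we get $F_u''(t_0)=0$, so $u(t_0x)\in\mathcal{M}^0$. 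This supplies the qualitative content of $(1)$--$(3)$ once the trichotomy is matched to $\mathcal{B}(u)\lessgtr 1$.

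The last and hardest step is to identify the threshold $h(t_0)\lessgtr N\beta$ with $\mathcal{B}(u)\lessgtr1$. I would substitute $t_0^{N-2}=\frac{(N-4)b\mathfrak{A}_u}{2a}$ back into $h$, obtaining a closed expression for $h(t_0)$ proportional to $a\mathfrak{A}_u t_0^2$ with $t_0^2=\big(\frac{(N-4)b\mathfrak{A}_u}{2a}\big)^{2/(N-2)}$; isolating and clearing the fractional power then rearranges the comparison $h(t_0)\lessgtr N\beta$ into the comparison with $\mathcal{B}(u)$ in \eqref{eq0023}, i.e. into the defining inequalities \eqref{eq0022}--\eqref{eqnew0012} of $\mathcal{B}_-,\mathcal{B}_0,\mathcal{B}_+$. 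The genuine obstacle here is purely the bookkeeping of the exponents of $a$, $b$, $\mathfrak{A}_u$ and of the numerical constants involving $N-2$, $N-4$ and $2$; the cleanest way to avoid slips is to solve $h(t_0)=N\beta$ for the critical value of $\beta$, substitute it into \eqref{eq0023} to check the normalisation at equality, and then obtain the two strict inequalities from the monotonicity of $\mathcal{B}(u)$, which is a strictly decreasing function of $\beta$ with the remaining data held fixed.
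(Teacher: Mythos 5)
Your proposal is correct and is essentially the paper's own proof: the paper likewise reduces the lemma to a one-variable analysis of $at^2+b\mathfrak{A}_ut^{4-N}$, except that it first rescales $u$ to $u_{t_0}\in\mathcal{M}^*$ via Lemma~\ref{lem0002} and studies $F_{2,u_{t_0}}(s)=1-b\mathfrak{A}_{u_{t_0}}s^{4-N}-as^2$, whereas you work on $h$ directly and classify the two roots through the identity $F_u''(t^*)=-(t^*)^{-N-1}h'(t^*)$, which is just a cleaner, explicit rendering of the sign/monotonicity argument the paper leaves implicit, so both routes yield the same trichotomy at the same threshold. One remark that your own proposed normalisation check would surface: carrying out the final bookkeeping with your $t_0$ (the minimiser of $h$) gives $h(t_0)=\frac{(N-2)^2a\mathfrak{A}_u}{2(N-4)}\big(\frac{(N-4)b\mathfrak{A}_u}{2a}\big)^{2/(N-2)}$, so $h(t_0)\lessgtr N\beta$ is equivalent to $(N-2)^2\,\mathcal{B}(u)\lessgtr 1$ with $\mathcal{B}$ exactly as printed in \eqref{eq0023}; that is, the printed \eqref{eq0023} (and hence the identity \eqref{eq0053}, which should read $\mathcal{F}_{a,b}(u_{t_0})=(N-2)^2\mathcal{B}(u)$) is missing a constant factor $(N-2)^2$ --- a harmless slip in the paper's normalisation, not a gap in your argument.
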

\begin{proof}
Let $u\in \h\backslash\{0\}$ and consider the fibering map $F_u(t)$.  By a direct calculation, we have that $F_u'(t)=Nt^{-N-1}F_{1,u}(t)$, where
\begin{eqnarray*}
F_{1,u}(t)=\frac1p\mathfrak{B}_u-\frac\lambda2\mathfrak{C}_u-\frac{N-2}{2N}\mathfrak{A}_u(at^2+b\mathfrak{A}_ut^{4-N}).
\end{eqnarray*}
It is easy to see that $F_u'(t)<0$ for all $t>0$ when $u\not\in\mathcal{C}$.  For every $u\in\mathcal{C}$, by Lemma~\ref{lem0002}, there exists $t_0>0$ such that $u_{t_0}\in\mathcal{M}^*$.  Thus,
\begin{eqnarray*}
F_{1,u}(t)=t_0^NF_{1,u_{t_0}}(s)=t_0^N\frac{N-2}{2N}\mathfrak{A}_{u_{t_0}}(1-b\mathfrak{A}_{u_{t_0}}s^{4-N}-as^2),
\end{eqnarray*}
where $s=\frac{t}{t_0}$.  Set $F_{2, u_{t_0}}(s)=1-b\mathfrak{A}_{u_{t_0}}s^{4-N}-as^2$.  Then by a direct calculation, we can see that there exist unique $0<s_1<s_2$ such that $F_{2, u_{t_0}}(s_i)=0$ for $i=1,2$, $F_{2, u_{t_0}}(s)<0$ for $0<s<s_1$, $F_{2, u_{t_0}}(s)>0$ for $s_1<s<s_2$ and $F_{2, u_{t_0}}(s)<0$ for $s>s_2$ when $\mathcal{F}_{a,b}(u_{t_0})<1$, there exists a unique $s_0=\bigg(\frac{(N-4)b\mathfrak{A}_{u_{t_0}}}{2a}\bigg)^{\frac{1}{N-2}}$ such that $F_{2, u_{t_0}}(s_0)=0$ when $\mathcal{F}_{a,b}(u_{t_0})=1$ and $F_{2, u_{t_0}}(s)<0$ for all $s>0$ when $\mathcal{F}_{a,b}(u_{t_0})>1$.  By \eqref{eq0013}, we can see that $t_0=\bigg(\frac{\frac1p\mathfrak{B}_u-\frac\lambda2\mathfrak{C}_u}{\frac{N-2}{2N}\mathfrak{A}_u}\bigg)^{\frac12}$.  Now, by a direct calculation, we have
\begin{eqnarray}
\mathcal{F}_{a,b}(u_{t_0})=\frac{a^{\frac{N-4}{N-2}}b^{\frac{2}{N-2}}\mathfrak{A}_u^{\frac{N}{N-2}}}
{2^{\frac{N}{N-2}}N(N-4)^{\frac{N-4}{N-2}}(\frac1p\mathfrak{B}_u-\frac\lambda2\mathfrak{C}_u)}=\mathcal{B}(u).\label{eq0053}
\end{eqnarray}
Thus, the conclusions follows immediately from the relation between the Pohozaev manifold $\mathcal{M}$ and the fibering map $F_u(t)$.
\end{proof}

\begin{remark}\label{rmk0001}
By checking the proof of Lemma~\ref{lem0001}, we can also see that for every $u\in\mathcal{B_{-}}$, $F_u(t_+)=\min_{0<s\leq t_-}F_u(s)$ and $F_u(t_-)=\max_{t_+\leq s}F_u(s)$ and $F_u(s)$ is strictly decreasing for $0<s<t_+$, strictly increasing for $t_+<s<t_-$ and strictly decreasing for $s>t_-$, where $t_\pm$ are given in Lemma~\ref{lem0001}.
\end{remark}

Our second observation on the Pohozaev manifold $\mathcal{M}$ is the following.
\begin{lemma}\label{lem0003}
Let $u\in\mathcal{M}$ and $N\geq5$, then we have
\begin{eqnarray}\label{eq0015}
\bigg(\frac{(N-2)ap}{2N}\bigg(\frac{\lambda p}{2}\bigg)^{\frac{2^*-p}{p-2}}\mathcal{S}^{\frac{2^*}{2}}\bigg)^{\frac{2}{2^*-2}}\leq\mathfrak{A}_u\leq\bigg(\frac{2N}{(N-2)bp}\bigg(\frac{2}{\lambda p}\bigg)^{\frac{2^*-p}{p-2}}\mathcal{S}^{-\frac{2^*}{2}}\bigg)^{\frac{2}{4-2^*}}.
\end{eqnarray}
Moreover, we also have
$\mathfrak{A}_u<\frac{2a}{(N-4)b}$ for $u\in\mathcal{M}^-$, $\mathfrak{A}_u=\frac{2a}{(N-4)b}$ for $u\in\mathcal{M}^0$ and $\mathfrak{A}_u>\frac{2a}{(N-4)b}$ for $u\in\mathcal{M}^+$.
\end{lemma}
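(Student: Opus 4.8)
The plan is to extract everything from the single identity that defines $\mathcal{M}$. For $u\in\mathcal{M}$ the constraint $\Psi(u)=0$ reads
\[
\frac{N-2}{2N}(a\mathfrak{A}_u+b\mathfrak{A}_u^2)+\frac{\lambda}{2}\mathfrak{C}_u=\frac1p\mathfrak{B}_u,
\]
and since every term on the left is strictly positive it yields at once the three one–sided inequalities $\frac{N-2}{2N}a\mathfrak{A}_u\le\frac1p\mathfrak{B}_u$, $\frac{N-2}{2N}b\mathfrak{A}_u^2\le\frac1p\mathfrak{B}_u$ and $\frac{\lambda}{2}\mathfrak{C}_u\le\frac1p\mathfrak{B}_u$. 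The first of these will produce the lower bound for $\mathfrak{A}_u$, the second the upper bound, and the third is the device I would use to remove the $L^2$–norm. Thus the whole estimate reduces to one a priori inequality, which I would establish first: for all $u\in\mathcal{M}$,
\[
\mathfrak{B}_u\le\Big(\frac{2}{\lambda p}\Big)^{\frac{2^*-p}{p-2}}\mathcal{S}^{-\frac{2^*}{2}}\mathfrak{A}_u^{\frac{2^*}{2}}.\qquad(\star)
\]

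To prove $(\star)$ I would interpolate $L^p$ between $L^2$ and $L^{2^*}$. With $\theta\in(0,1)$ determined by $\frac1p=\frac{1-\theta}{2}+\frac{\theta}{2^*}$ one finds $\frac{p(1-\theta)}{2}=\frac{2^*-p}{2^*-2}$ and $\frac{p\theta}{2}=\frac{(p-2)2^*}{2(2^*-2)}$, so that
\[
\mathfrak{B}_u\le\|u\|_{L^2(\bbr^N)}^{p(1-\theta)}\|u\|_{L^{2^*}(\bbr^N)}^{p\theta}
=\mathfrak{C}_u^{\frac{2^*-p}{2^*-2}}\big(\|u\|_{L^{2^*}(\bbr^N)}^2\big)^{\frac{(p-2)2^*}{2(2^*-2)}}.
\]
The Sobolev inequality $\|u\|_{L^{2^*}(\bbr^N)}^2\le\mathcal{S}^{-1}\mathfrak{A}_u$ controls the last factor, while $\mathfrak{C}_u\le\frac{2}{\lambda p}\mathfrak{B}_u$ controls the first. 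Substituting both, the power of $\mathfrak{B}_u$ appearing on the right is $\frac{2^*-p}{2^*-2}$, so that $1-\frac{2^*-p}{2^*-2}=\frac{p-2}{2^*-2}$ survives on the left; raising to the power $\frac{2^*-2}{p-2}$ then collapses all the exponents to give $(\star)$. The hard part of the whole proof is precisely this bookkeeping: tracking the interpolation exponent $\theta$ so that the $\mathfrak{C}_u$–factor is eliminated and the powers $\frac{2^*-p}{p-2}$ of $\frac{\lambda p}{2}$, $-\frac{2^*}{2}$ of $\mathcal{S}$ and $\frac{2^*}{2}$ of $\mathfrak{A}_u$ come out exactly.

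With $(\star)$ in hand the two bounds are mechanical. Feeding $(\star)$ into $\frac{N-2}{2N}a\mathfrak{A}_u\le\frac1p\mathfrak{B}_u$ and cancelling one power of $\mathfrak{A}_u$ gives $\mathfrak{A}_u^{\frac{2^*-2}{2}}\ge\frac{(N-2)ap}{2N}\big(\frac{\lambda p}{2}\big)^{\frac{2^*-p}{p-2}}\mathcal{S}^{\frac{2^*}{2}}$, and the lower bound in \eqref{eq0015} follows on raising to $\frac{2}{2^*-2}$. Feeding $(\star)$ into $\frac{N-2}{2N}b\mathfrak{A}_u^2\le\frac1p\mathfrak{B}_u$ and cancelling $\mathfrak{A}_u^2$ leaves a constant times $\mathfrak{A}_u^{\frac{2^*-4}{2}}$; because $N\ge5$ forces $2^*<4$, this exponent is negative, so inverting reverses the inequality and raising to $\frac{2}{4-2^*}$ yields the upper bound in \eqref{eq0015}.

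For the ``Moreover'' part I would pass to the fibering map $F_u(t)=\mathcal{E}(u_t)$. Writing $F_u'(t)=Nt^{-N-1}F_{1,u}(t)$ as in the proof of Lemma~\ref{lem0001} and using $F_{1,u}(1)=0$ for $u\in\mathcal{M}$, a single differentiation gives
\[
F_u''(1)=NF_{1,u}'(1)=-\frac{N-2}{2}\mathfrak{A}_u\big(2a-(N-4)b\mathfrak{A}_u\big).
\]
Since $\mathfrak{A}_u>0$ and $N-2>0$, the sign of $F_u''(1)$ is opposite to that of $2a-(N-4)b\mathfrak{A}_u$. Comparing with the definitions \eqref{eq0005}--\eqref{eq0007} of $\mathcal{M}^-$, $\mathcal{M}^0$, $\mathcal{M}^+$ then reads off $\mathfrak{A}_u<\frac{2a}{(N-4)b}$, $\mathfrak{A}_u=\frac{2a}{(N-4)b}$ and $\mathfrak{A}_u>\frac{2a}{(N-4)b}$ respectively, which completes the argument.
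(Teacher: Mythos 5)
Your proposal is correct and follows essentially the same route as the paper: both use the H\"older/interpolation inequality combined with the Sobolev inequality and the constraint $\frac{\lambda}{2}\mathfrak{C}_u\leq\frac1p\mathfrak{B}_u$ to establish the bound $\mathfrak{B}_u\leq\big(\frac{2}{\lambda p}\big)^{\frac{2^*-p}{p-2}}\mathcal{S}^{-\frac{2^*}{2}}\mathfrak{A}_u^{\frac{2^*}{2}}$ (your $(\star)$ is exactly what the paper's chains leading to \eqref{eqnew0006} and \eqref{eq0017} produce), and then feed it into the $a$-term and $b$-term of the Pohozaev identity, using $2<2^*<4$ for $N\geq5$. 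Your treatment of the ``moreover'' part via $F_u''(1)=NF_{1,u}'(1)=-\frac{N-2}{2}\mathfrak{A}_u\big(2a-(N-4)b\mathfrak{A}_u\big)$ is just a streamlined, factored form of the paper's computation, which expands $F_u''(1)$ and substitutes the constraint to reach the same inequality $(N-4)b\mathfrak{A}_u^2-2a\mathfrak{A}_u<0$.
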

\begin{proof}
By the H\"older and Sobolev inequalities, for every $u\in\h$, we have
\begin{eqnarray}\label{eqnew9100}
\mathfrak{B}_u\leq\mathcal{S}^{-\frac{2^*(p-2)}{2(2^*-2)}}\mathfrak{C}_u^{\frac{2^*-p}{2^*-2}}\mathfrak{A}_u^{\frac{2^*(p-2)}{2(2^*-2)}}.
\end{eqnarray}
Therefore, for $u\in\mathcal{M}$, we can see that
\begin{eqnarray*}
\frac\lambda2\mathfrak{C}_u&\leq&a\frac{N-2}{2N}\mathfrak{A}_u+b\frac{N-2}{2N}\mathfrak{A}_u^2+\frac\lambda2\mathfrak{C}_u\\
&=&\frac1p\mathfrak{B}_u\\
&\leq&\frac1p\mathcal{S}^{-\frac{2^*(p-2)}{2(2^*-2)}}\mathfrak{C}_u^{\frac{2^*-p}{2^*-2}}\mathfrak{A}_u^{\frac{2^*(p-2)}{2(2^*-2)}},
\end{eqnarray*}
which deduces that
\begin{eqnarray}\label{eq0014}
\frac{\lambda p}{2}\mathcal{S}^{\frac{2^*(p-2)}{2(2^*-2)}}\mathfrak{C}_u^{\frac{p-2}{2^*-2}}\leq\mathfrak{A}_u^{\frac{2^*(p-2)}{2(2^*-2)}}.
\end{eqnarray}
It follows from $u\in\mathcal{M}$ once more that
\begin{eqnarray}
a\frac{N-2}{2N}\mathfrak{A}_u&\leq& a\frac{N-2}{2N}\mathfrak{A}_u+b\frac{N-2}{2N}\mathfrak{A}_u^2\notag\\
&=&\frac1p\mathfrak{B}_u-\frac\lambda2\mathfrak{C}_u\notag\\
&\leq&\frac1p\mathcal{S}^{-\frac{2^*(p-2)}{2(2^*-2)}}\mathfrak{C}_u^{\frac{2^*-p}{2^*-2}}\mathfrak{A}_u^{\frac{2^*(p-2)}{2(2^*-2)}}-\frac\lambda2\mathfrak{C}_u\notag\\
&\leq&\frac1p\bigg(\frac{2}{\lambda p}\bigg)^{\frac{2^*-p}{p-2}}\mathcal{S}^{-\frac{2^*}{2}}\mathfrak{A}_u^{\frac{2^*}{2}}\label{eqnew0006}.
\end{eqnarray}
On the other hand, by \eqref{eq0014} and the fact that $u\in\mathcal{M}$ once more, we can see that
\begin{eqnarray}
b\frac{N-2}{2N}\mathfrak{A}_u^2&\leq& a\frac{N-2}{2N}\mathfrak{A}_u+b\frac{N-2}{2N}\mathfrak{A}_u^2\notag\\
&=&\frac1p\mathfrak{B}_u-\frac\lambda2\mathfrak{C}_u\notag\\
&\leq&\frac1p\mathcal{S}^{-\frac{2^*(p-2)}{2(2^*-2)}}\mathfrak{C}_u^{\frac{2^*-p}{2^*-2}}\mathfrak{A}_u^{\frac{2^*(p-2)}{2(2^*-2)}}-\frac\lambda2\mathfrak{C}_u\notag\\
&\leq&\frac1p\bigg(\frac{2}{\lambda p}\bigg)^{\frac{2^*-p}{p-2}}\mathcal{S}^{-\frac{2^*}{2}}\mathfrak{A}_u^{\frac{2^*}{2}}.\label{eq0017}
\end{eqnarray}
Since $2<2^*<4$ for $N\geq5$, \eqref{eq0015} follows immediately from \eqref{eqnew0006} and \eqref{eq0017}.  It remains to prove that $\mathfrak{A}_u<\frac{2a}{(N-4)b}$ for $u\in\mathcal{M}^-$, $\mathfrak{A}_u=\frac{2a}{(N-4)b}$ for $u\in\mathcal{M}^0$ and $\mathfrak{A}_u>\frac{2a}{(N-4)b}$ for $u\in\mathcal{M}^+$.  We only give the proof of the conclusion that $\mathfrak{A}_u<\frac{2a}{(N-4)b}$ for $u\in\mathcal{M}^-$, since the other two conclusions can be proved in a similar way.  Indeed, for every $u\in\mathcal{M}^-$, by the definition of $\mathcal{M}^-$ given by \eqref{eq0005}, we can see that
\begin{eqnarray*}
\frac{(N-2)(2N-3)b}{2N}\mathfrak{A}_u^2+\frac{(N-2)(N-1)a}{2N}\mathfrak{A}_u-(N+1)(\frac1p\mathfrak{B}_u-\frac{\lambda}{2}\mathfrak{C}_u)<0.
\end{eqnarray*}
Since $\mathcal{M}^-\subset\mathcal{M}$, we must have that
\begin{eqnarray*}
\frac1p\mathfrak{B}_u-\frac{\lambda}{2}\mathfrak{C}_u=a\frac{N-2}{2N}\mathfrak{A}_u+b\frac{N-2}{2N}\mathfrak{A}_u^2.
\end{eqnarray*}
It follows that
\begin{eqnarray*}
0&>&\frac{(N-2)(2N-3)b}{2N}\mathfrak{A}_u^2+\frac{(N-2)(N-1)a}{2N}\mathfrak{A}_u\\
&&-a\frac{(N-2)(N+1)}{2N}\mathfrak{A}_u-b\frac{(N-2)(N+1)}{2N}\mathfrak{A}_u^2\\
&=&\frac{N-2}{2N}\bigg((N-4)b\mathfrak{A}_u^2-2a\mathfrak{A}_u\bigg).
\end{eqnarray*}
Note that $N\geq5$.  Thus, we must have that $\mathfrak{A}_u<\frac{2a}{(N-4)b}$ for $u\in\mathcal{M}^-$, which completes the proof.
\end{proof}

Our third observation on the Pohozaev manifold $\mathcal{M}$ is the following.
\begin{lemma}\label{lem0004}
Let $u_0\in\mathcal{M}$ be a local minimum point of $\mathcal{E}(u)$ on $\mathcal{M}$ and $N\geq5$.  If $u_0\not\in\mathcal{M}^0$, then $\mathcal{E}'(u_0)=0$ in $H^{-1}(\bbr^N)$, where $H^{-1}(\bbr^N)$ is the dual space of $\h$ and $\mathcal{M}^0$ is given by \eqref{eq0006}.
\end{lemma}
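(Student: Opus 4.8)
The plan is to treat $u_0$ as a minimizer of $\mathcal{E}$ subject to the single scalar constraint $\Psi(u)=0$ and to show, via a Lagrange multiplier argument, that the multiplier must vanish. Since $\mathcal{E}$ and $\Psi$ are $C^2$ on $\h$, the only structural input needed to invoke the Lagrange rule is that $u_0$ be a regular point of the constraint, i.e. $\Psi'(u_0)\neq0$. I would dispose of this first: if $\Psi'(u_0)=0$, then $u_0$ weakly solves $-\big(\tfrac{(N-2)a}{N}+\tfrac{2(N-2)b}{N}\mathfrak{A}_{u_0}\big)\Delta u_0+\lambda u_0=|u_0|^{p-2}u_0$; applying the Pohozaev identity to this equation together with the defining relation $\Psi(u_0)=0$ (written as $\tfrac1p\mathfrak{B}_{u_0}-\tfrac\lambda2\mathfrak{C}_{u_0}=\tfrac{N-2}{2N}(a\mathfrak{A}_{u_0}+b\mathfrak{A}_{u_0}^2)$) forces $\mathfrak{A}_{u_0}=\tfrac{2a}{(N-4)b}$, which by Lemma~\ref{lem0003} means $u_0\in\mathcal{M}^0$, contrary to hypothesis. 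Hence $\Psi'(u_0)\neq0$, and note how pleasantly the regular-point check reduces to the same $\mathcal{M}^0$ characterization we need at the end.

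Given regularity of the constraint, the Lagrange multiplier rule produces $\mu\in\bbr$ with $\mathcal{E}'(u_0)=\mu\,\Psi'(u_0)$ in $H^{-1}(\bbr^N)$. Writing out both Fr\'echet derivatives and collecting terms, this is precisely the statement that $u_0$ is a weak solution of
\[
-K\Delta u_0+(1-\mu)\lambda u_0=(1-\mu)|u_0|^{p-2}u_0,\qquad K=(a+b\mathfrak{A}_{u_0})-\mu\Big(\tfrac{(N-2)a}{N}+\tfrac{2(N-2)b}{N}\mathfrak{A}_{u_0}\Big).
\]
Since $\mathfrak{A}_{u_0}$ is frozen as a constant, this is a genuine constant-coefficient semilinear equation, to which I would apply the Pohozaev identity, obtaining $\tfrac{N-2}{2}K\mathfrak{A}_{u_0}=N(1-\mu)\big(\tfrac1p\mathfrak{B}_{u_0}-\tfrac\lambda2\mathfrak{C}_{u_0}\big)$.

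Feeding the constraint relation into this Pohozaev identity and cancelling the nonzero factor $\tfrac{N-2}{2}\mathfrak{A}_{u_0}$ gives $K=(1-\mu)(a+b\mathfrak{A}_{u_0})$. Comparing with the explicit expression for $K$ and cancelling $(a+b\mathfrak{A}_{u_0})$ collapses everything to $\tfrac{\mu}{N}\big((N-4)b\mathfrak{A}_{u_0}-2a\big)=0$. Since $u_0\not\in\mathcal{M}^0$, Lemma~\ref{lem0003} gives $\mathfrak{A}_{u_0}\neq\tfrac{2a}{(N-4)b}$, so the bracket is nonzero and therefore $\mu=0$, whence $\mathcal{E}'(u_0)=0$ in $H^{-1}(\bbr^N)$.

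The step I expect to require the most care is the rigorous justification of the Pohozaev identity for $u_0$: one must establish enough regularity and decay of $u_0$ to legitimately test the equation against $x\cdot\nabla u_0$. Fortunately, once $\mathfrak{A}_{u_0}$ is treated as a fixed constant the equation is a standard constant-coefficient problem, so the required regularity follows from routine elliptic bootstrapping and the Pohozaev identity is available in its classical form; all of the remaining algebra is elementary. I would present the argument in the order above—regular point, Lagrange equation, Pohozaev plus the constraint, and the final cancellation—so that the whole proof pivots on the single nondegeneracy fact $\mathfrak{A}_{u_0}\neq\tfrac{2a}{(N-4)b}$ supplied by $u_0\not\in\mathcal{M}^0$.
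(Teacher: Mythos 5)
Your proposal is correct and follows essentially the same route as the paper: a Lagrange multiplier argument in which both the regular-point check ($\Psi'(u_0)\neq0$, your first step, the paper's Claim~1) and the vanishing of the multiplier (your $\mu=0$, the paper's Claim~2 that $\sigma=0$) are obtained by applying the Pohozaev identity to the constant-coefficient Euler--Lagrange equation, combining it with the constraint $\Psi(u_0)=0$, and reducing to the nondegeneracy $\mathfrak{A}_{u_0}\neq\frac{2a}{(N-4)b}$ supplied by Lemma~\ref{lem0003}. Your algebra, including the coefficient $K$ and the final identity $\frac{\mu}{N}\big((N-4)b\mathfrak{A}_{u_0}-2a\big)=0$, matches the paper's \eqref{eq0020}--\eqref{eq0021} exactly, and your remark on justifying the Pohozaev identity by bootstrapping is if anything slightly more careful than the paper, which invokes it without comment.
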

\begin{proof}
The main idea of this proof comes from \cite{R06}, which was also used in \cite{A12,LY14}.  However, as we will see, since we need to deal with the high dimensions, we also need to borrow some ideas from \cite{HWW15}.  Suppose $u_0\in\mathcal{M}$ be a local minimum point of $\mathcal{E}(u)$ on $\mathcal{M}$.  Since $\Psi(u)$ is of $C^1$ in $\h$, by the method of Lagrange multipliers, there exists $\sigma\in\bbr$ such that $\mathcal{E}'(u_0)-\sigma\Psi'(u_0)=0$ in $H^{-1}(\bbr^N)$.  For the sake of clarity, we divide the following proof into two claims.

{\bf Claim~1}\quad We have $\Psi'(u_0)\not=0$ in $H^{-1}(\bbr^N)$.

Indeed, suppose the contrary that $\Psi'(u_0)=0$ in $H^{-1}(\bbr^N)$.  Then recalling the definition of $\Psi(u)$ given by \eqref{eq0003}, we can see that $u_0$ satisfies the following equation in the weak sense
\begin{eqnarray}\label{eq0018}
-\bigg(\frac{a(N-2)}{N}+\frac{2b(N-2)}{N}\mathfrak{A}_{u_0}\bigg)\Delta u_0+\lambda u_0=|u_0|^{p-2}u_0.
\end{eqnarray}
Thus, by the Pohozaev identity of \eqref{eq0018}, we have
\begin{eqnarray*}
\bigg(\frac{a(N-2)}{N}+\frac{2b(N-2)}{N}\mathfrak{A}_{u_0}\bigg)\frac{N-2}{2N}\mathfrak{A}_{u_0}+\frac\lambda 2\mathfrak{C}_{u_0}-\frac1p\mathfrak{B}_{u_0}=0,
\end{eqnarray*}
which together with the fact that $u_0\in\mathcal{M}$, implies
\begin{eqnarray*}
\bigg(\frac{a(N-2)}{N}+\frac{2b(N-2)}{N}\mathfrak{A}_{u_0}\bigg)\frac{N-2}{2N}\mathfrak{A}_{u_0}=\frac{a(N-2)}{2N}\mathfrak{A}_{u_0}+\frac{b(N-2)}{2N}\mathfrak{A}_{u_0}^2.
\end{eqnarray*}
It follows that
\begin{eqnarray}\label{eq0019}
\frac{N-2}{2N}\mathfrak{A}_{u_0}\bigg(\frac{b(N-4)}{N}\mathfrak{A}_{u_0}-\frac{2a}{N}\bigg)=0
\end{eqnarray}
Since $N\geq5$, by \eqref{eq0019}, we must have that $\mathfrak{A}_{u_0}=\frac{2a}{b(N-4)}$.  By Lemma~\ref{lem0003}, we must have that $u_0\in\mathcal{M}^0$, which is a contradiction.

{\bf Claim~2}\quad We have $\sigma=0$.

Indeed, suppose the contrary that $\sigma\not=0$.  Then recalling the definition of $\mathcal{E}(u)$ given by \eqref{eq0004} and the fact that $\mathcal{E}'(u_0)-\sigma\Psi'(u_0)=0$ in $H^{-1}(\bbr^N)$, we can see that $u_0$ satisfies the following equation in the weak sense
\begin{eqnarray}\label{eq0020}
&&-\bigg((1-\frac{(N-2)\sigma}{N})a+(1-\frac{2(N-2)\sigma}{N})b\mathfrak{A}_{u_0}\bigg)\Delta u_0+(1-\sigma)\lambda u_0\notag\\
&&=(1-\sigma)|u_0|^{p-2}u_0.
\end{eqnarray}
By the Pohozaev identity of \eqref{eq0020}, we have
\begin{eqnarray*}
&&\bigg((1-\frac{(N-2)\sigma}{N})a+(1-\frac{2(N-2)\sigma}{N})b\mathfrak{A}_{u_0}\bigg)\frac{N-2}{2N}\mathfrak{A}_{u_0}\\
&&+\frac{\lambda(1-\sigma)}{2}\mathfrak{C}_{u_0}-\frac{1-\sigma}{p}\mathfrak{B}_{u_0}=0,
\end{eqnarray*}
which together with the fact that $u_0\in\mathcal{M}$, implies
\begin{eqnarray*}
&&\bigg((1-\frac{(N-2)\sigma}{N})a+(1-\frac{2(N-2)\sigma}{N})b\mathfrak{A}_{u_0}\bigg)\frac{N-2}{2N}\mathfrak{A}_{u_0}\\
&&=\frac{a(N-2)(1-\sigma)}{2N}\mathfrak{A}_{u_0}+\frac{b(N-2)(1-\sigma)}{2N}\mathfrak{A}_{u_0}^2.
\end{eqnarray*}
It follows that
\begin{eqnarray}\label{eq0021}
\frac{(N-2)}{2N}\mathfrak{A}_{u_0}(\frac{b(N-4)}{N}\mathfrak{A}_{u_0}-\frac{2a}{N})\sigma=0.
\end{eqnarray}
Since $N\geq5$, by \eqref{eq0021}, we must have $\mathfrak{A}_{u_0}=\frac{2a}{b(N-4)}$, which together with Lemma~\ref{lem0003}, implies $u_0\in\mathcal{M}^0$.  It is also a contradiction.

Now, combining the above two claims and the fact that $\mathcal{E}'(u_0)-\sigma\Psi'(u_0)=0$ in $H^{-1}(\bbr^N)$, we must have $\mathcal{E}'(u_0)=0$ in $H^{-1}(\bbr^N)$.
\end{proof}

\subsection{Proof of Theorem~\ref{thm0001}}
We respectively denote $\inf_{\mathcal{M}}\mathcal{E}(u)$, $\inf_{\mathcal{M}^-}\mathcal{E}(u)$ and $\inf_{\mathcal{M}^+}\mathcal{E}(u)$ by $m$, $m^-$ and $m^+$.  Then we have the following.
\begin{lemma}\label{lem0005}
Let $N\geq5$.  If $\mathcal{B}_-\not=\emptyset$, then $m^\pm$ can be attained by some $u^\pm_{a,b,\lambda}$, which are both radial and nonnegative in $\bbr^N$.
\end{lemma}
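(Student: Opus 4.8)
The plan is to run the direct method separately on $\mathcal{M}^-$ and $\mathcal{M}^+$, exploiting the fact that on the Pohozaev manifold the energy collapses to a function of the single scalar $\mathfrak{A}_u$. First I would record that on $\mathcal{M}$ the constraint $\Psi(u)=0$ lets one eliminate $\frac1p\mathfrak{B}_u$, giving
\[
\mathcal{E}(u)=\frac{a}{N}\mathfrak{A}_u-\frac{(N-4)b}{4N}\mathfrak{A}_u^2=:g(\mathfrak{A}_u)\qquad\text{for }u\in\mathcal{M},
\]
where, since $N\geq5$, $g$ is strictly increasing on $(0,\frac{2a}{(N-4)b})$ and strictly decreasing on $(\frac{2a}{(N-4)b},+\infty)$; by Lemma~\ref{lem0003} these are precisely the ranges of $\mathfrak{A}_u$ on $\mathcal{M}^-$ and $\mathcal{M}^+$. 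Combined with the two-sided bounds on $\mathfrak{A}_u$ from Lemma~\ref{lem0003}, this already shows $m^\pm$ are finite, and it identifies $\inf_{\mathcal{M}^-}\mathcal{E}$ with minimizing $\mathfrak{A}_u$ and $\inf_{\mathcal{M}^+}\mathcal{E}$ with maximizing $\mathfrak{A}_u$. I will also use the abbreviation $\Phi(s)=\frac{N-2}{2N}(as+bs^2)$, so that $\Phi(\mathfrak{A}_u)=\frac1p\mathfrak{B}_u-\frac\lambda2\mathfrak{C}_u$ on $\mathcal{M}$ and $F_u'(1)=N\big(\frac1p\mathfrak{B}_u-\frac\lambda2\mathfrak{C}_u-\Phi(\mathfrak{A}_u)\big)$.

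Next I would reduce to radial nonnegative minimizers. Given a minimizing sequence $\{u_n\}\subset\mathcal{M}^-$ (resp. $\mathcal{M}^+$), I replace $u_n$ by the Schwarz symmetrization $v_n=|u_n|^*$: the P\'olya--Szeg\H{o} inequality gives $\mathfrak{A}_{v_n}\leq\mathfrak{A}_{u_n}$, while $\mathfrak{B}_{v_n}=\mathfrak{B}_{u_n}$ and $\mathfrak{C}_{v_n}=\mathfrak{C}_{u_n}$, so $v_n\in\mathcal{C}$; since $\mathcal{B}(u)$ in \eqref{eq0023} is increasing in $\mathfrak{A}_u$ at fixed $\frac1p\mathfrak{B}_u-\frac\lambda2\mathfrak{C}_u$, one gets $\mathcal{B}(v_n)\leq\mathcal{B}(u_n)<1$, i.e. $v_n\in\mathcal{B}_-$ (note $\mathcal{M}^\pm\subset\mathcal{B}_-$, because each of their elements realizes its own fibering extremum). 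Projecting $v_n$ back onto the relevant component via Lemma~\ref{lem0001} and using $\mathcal{E}|_{\mathcal{M}}=g(\mathfrak{A})$ together with the on-manifold relation $\Phi(\mathfrak{A})=\frac1p\mathfrak{B}-\frac\lambda2\mathfrak{C}$, a short check of the projection parameter (namely $t_-\geq1$ for $\mathcal{M}^-$, $t_+\leq1$ for $\mathcal{M}^+$, forced by $\Phi(\mathfrak{A}_{v_n})\leq\Phi(\mathfrak{A}_{u_n})$) shows the reprojected function is radial, nonnegative, stays in the same component, and has energy $\leq\mathcal{E}(u_n)$. Hence I may assume the minimizing sequence is radial and nonnegative.

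I would then extract compactness. By Lemma~\ref{lem0003} the quantities $\mathfrak{A}_{u_n}$ are bounded, and inserting the constraint into the interpolation bound \eqref{eqnew9100} bounds $\mathfrak{C}_{u_n}$ and hence $\mathfrak{B}_{u_n}$; thus $\{u_n\}$ is bounded in $H^1_r(\bbr^N)$ and, along a subsequence, $u_n\rightharpoonup u_0$ with $u_n\to u_0$ strongly in $L^p(\bbr^N)$ by the compact radial embedding. Since $\mathfrak{B}_{u_n}=p\Phi(\mathfrak{A}_{u_n})+\frac{\lambda p}{2}\mathfrak{C}_{u_n}\geq c>0$ (the lower bound on $\mathfrak{A}_u$ in Lemma~\ref{lem0003}), we get $\mathfrak{B}_{u_0}>0$, so $u_0\neq0$. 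Because $\mathcal{E}(u_n)=g(\mathfrak{A}_{u_n})\to m^\pm$ and $g$ is continuous, $\mathfrak{A}_{u_n}\to A^*$ with $g(A^*)=m^\pm$; then $\Phi(\mathfrak{A}_{u_n})\to\Phi(A^*)$ forces $\mathfrak{C}_{u_n}$ to converge as well (to some $\mathfrak{C}^*\geq\mathfrak{C}_{u_0}$), while weak lower semicontinuity gives $\mathfrak{A}_{u_0}\leq A^*$.

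The heart of the proof is to upgrade these to strong convergence and to locate $u_0$ in the correct component, and this is where I expect the main obstacle. Setting $D_0=\frac1p\mathfrak{B}_{u_0}-\frac\lambda2\mathfrak{C}_{u_0}$, the inequalities above give $D_0\geq\Phi(A^*)\geq\Phi(\mathfrak{A}_{u_0})$, hence $F_{u_0}'(1)=N(D_0-\Phi(\mathfrak{A}_{u_0}))\geq0$, with equality exactly when there is no loss in either $\mathfrak{A}$ or $\mathfrak{C}$. If any loss occurred, then $F_{u_0}'(1)>0$, so by Remark~\ref{rmk0001} the fibering map of $u_0$ has the full three-interval shape with $t_+<1<t_-$; projecting $u_0$ at $t_->1$ onto $\mathcal{M}^-$ (resp. at $t_+<1$ onto $\mathcal{M}^+$) and using the scalings $\mathfrak{A}_{(u_0)_t}=t^{2-N}\mathfrak{A}_{u_0}$ and $\Phi(\mathfrak{A}_{(u_0)_t})=t^{-N}D_0$, one finds the projected value of $\mathfrak{A}$ lies strictly below $A^*$ on the increasing branch of $g$ (resp. strictly above $A^*$ on the decreasing branch), so its energy is strictly less than $g(A^*)=m^\pm$, contradicting minimality. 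Therefore $\mathfrak{A}_{u_0}=A^*$ and $\mathfrak{C}_{u_0}=\mathfrak{C}^*$, i.e. $u_n\to u_0$ strongly in $H^1$, $\Psi(u_0)=0$, and $\mathcal{E}(u_0)=m^\pm$; comparing with any fixed element of $\mathcal{M}^\pm$ shows $m^\pm<g(\frac{2a}{(N-4)b})$, which forces $A^*<\frac{2a}{(N-4)b}$ (resp. $>$) and places $u_0$ genuinely in $\mathcal{M}^-$ (resp. $\mathcal{M}^+$), with $u_0$ radial and nonnegative as a strong limit of such functions. The difficulty is precisely the compactness: because $D^{1,2}(\bbr^N)$ does not embed in $L^p$ and $\|\cdot\|_{L^2}$ is never compact, neither the gradient nor the $L^2$ part is controlled by soft arguments; the scaling/Pohozaev structure in $g$ and $\Phi$ is what converts ``loss of mass'' into the strict sign $F_{u_0}'(1)>0$, hence into an energy-decreasing fiber direction incompatible with minimality, and verifying that the projection lands in the intended component (where $N\geq5$ makes $(N-4)b>0$) is the delicate point.
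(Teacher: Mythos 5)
Your proof is correct, and its first half coincides with the paper's: Schwarz symmetrization, the observation that $\mathcal{M}^\pm\subset\mathcal{B}_-$, reprojection via Lemma~\ref{lem0001} with $t_+\le1\le t_-$ forced by $F'(1)\ge0$ and Remark~\ref{rmk0001}, boundedness from Lemma~\ref{lem0003} together with the interpolation bound, and the compact embedding of radial $H^1$ functions into $L^p(\bbr^N)$. The endgame, however, is genuinely different. The paper never establishes strong convergence: it takes the weak limit $v_0^\pm$, asserts $v_0^\pm\in\mathcal{B}_-$, projects it back onto $\mathcal{M}^\pm$ by Lemma~\ref{lem0001}, and bounds the projected energy by $m^\pm$ using pointwise weak lower semicontinuity of $t\mapsto F_u(t)$ combined with the monotonicity pattern of Remark~\ref{rmk0001}, exactly as in the chains \eqref{eq0027}--\eqref{eq0028}. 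You instead exploit the scalar reduction $\mathcal{E}|_{\mathcal{M}}=g(\mathfrak{A}_u)$ --- which the paper only invokes later, in the proof of Theorem~\ref{thm0001}, as the function $H(s)$ --- to force $\mathfrak{A}_{u_n}\to A^*$ and $\mathfrak{C}_{u_n}\to\mathfrak{C}^*$, and then run a loss-of-compactness dichotomy: any strict loss yields $F_{u_0}'(1)>0$ and an energy-decreasing reprojection strictly below $m^\pm$. This buys you more than the paper's argument: genuine strong $H^1$-convergence of the minimizing sequence, and a clean verification that the limit lies in $\mathcal{M}^-$ resp.\ $\mathcal{M}^+$ rather than in $\mathcal{M}^0$, via $m^\pm<g\big(\tfrac{2a}{(N-4)b}\big)$; the paper glosses over the corresponding point with ``Clearly, $v_0^\pm\in\mathcal{B}_-$,'' although a priori the weak limit only satisfies $\mathcal{B}(v_0)\le1$.

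One step deserves fleshing out: in the $\mathcal{M}^+$ case you assert that the reprojected value of $\mathfrak{A}$ lies strictly above $A^*$, and this is true but not immediate from the scalings alone, since $\mathfrak{A}_{u_0}$ may drop well below $A^*$ while $t_+$ is close to $1$. It can be checked by writing the fibering equation in the variable $A=t^{2-N}\mathfrak{A}_{u_0}$ as $\tfrac{N-2}{2N}\big(aA^{-\frac{2}{N-2}}+bA^{\frac{N-4}{N-2}}\big)=D_0\,\mathfrak{A}_{u_0}^{-\frac{N}{N-2}}$, whose right-hand side strictly exceeds its no-loss value and whose left-hand side attains its minimum exactly at $A=\tfrac{2a}{(N-4)b}\le A^*$, so the larger root exceeds $A^*$. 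More simply, you can bypass the $\mathfrak{A}$-comparison altogether: under loss, lower semicontinuity gives the strict inequality $\mathcal{E}(u_0)<\lim_n\mathcal{E}(u_n)=m^+$, and by Remark~\ref{rmk0001} one has $F_{u_0}(t_+)=\min_{0<s\le t_-}F_{u_0}(s)\le F_{u_0}(1)=\mathcal{E}(u_0)<m^+$ with $(u_0)_{t_+}\in\mathcal{M}^+$, which already contradicts the definition of $m^+$. With that point made explicit, your argument is complete.
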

\begin{proof}
Since $\mathcal{B}_-\not=\emptyset$, by Lemma~\ref{lem0001}, $\mathcal{M}^\pm\not=\emptyset$.  Let $\{u_n^\pm\}\subset\mathcal{M}^\pm$ respectively be a minimizing sequence of $\mathcal{E}(u)$ for $m^\pm$.  Then by the Schwartz symmetrization, there exists $\{u_n^{*,\pm}\}\subset H^1_r(\bbr^N)$ such that
\begin{eqnarray}\label{eq0026}
\mathfrak{A}_{u_n^{*,\pm}}\leq\mathfrak{A}_{u_n^\pm},\quad \mathfrak{B}_{u_n^{*,\pm}}=\mathfrak{B}_{u_n^\pm}\quad\text{and}\quad\mathfrak{C}_{u_n^{*,\pm}}=\mathfrak{C}_{u_n^\pm}.
\end{eqnarray}
Thus, by the definitions of $\mathcal{B}_-$ and $\mathcal{B}(u)$ respectively given by \eqref{eq0022} and \eqref{eq0023}, we must have that $\{u_n^{*,\pm}\}\subset\mathcal{B}_-$.  It follows from Lemma~\ref{lem0001} that there exist unique $0<t_{n,+}<t_{n,-}$ such that $v_n^\pm=u_n^{*,\pm}(t_{n,\pm}x)\in\mathcal{M}^\pm$.  Since \eqref{eq0026} holds, we must have from $\{u_n^\pm\}\subset\mathcal{M}^\pm$ that $F_{u_n^{*,\pm}}'(1)\geq0$.  It follows from Remark~\ref{rmk0001} that $t_{n,+}\leq1\leq t_{n,-}$, which together with Remark~\ref{rmk0001} once more, implies
\begin{eqnarray}
m^-+o_n(1)&=&F_{u_n^-}(1)\notag\\
&\geq&F_{u_n^-}(t_{n,-})\notag\\
&=&\frac{b}{4}\mathfrak{A}_{u_n^-}^2t_{n,-}^{4-2N}+\frac{a}{2}\mathfrak{A}_{u_n^-}t_{n,-}^{2-N}+(\frac\lambda 2\mathfrak{B}_{u_n^-}-\frac1p\mathfrak{C}_{u_n^-})t_{n,-}^{-N}\notag\\
&\geq&\frac{b}{4}\mathfrak{A}_{u_n^{*,-}}^2t_{n,-}^{4-2N}+\frac{a}{2}\mathfrak{A}_{u_n^{*,-}}t_{n,-}^{2-N}+(\frac\lambda 2\mathfrak{B}_{u_n^{*,-}}-\frac1p\mathfrak{C}_{u_n^{*,-}})t_{n,-}^{-N}\notag\\
&=&\frac{b}{4}\mathfrak{A}_{v_n^-}^2+\frac{a}{2}\mathfrak{A}_{v_n^-}+\frac\lambda 2\mathfrak{B}_{v_n^-}-\frac1p\mathfrak{C}_{v_n^-}\notag\\
&\geq&m^-\label{eq0027}
\end{eqnarray}
and
\begin{eqnarray}
m^++o_n(1)&=&F_{u_n^+}(1)\notag\\
&=&\frac{b}{4}\mathfrak{A}_{u_n^+}^2+\frac{a}{2}\mathfrak{A}_{u_n^+}+\frac\lambda 2\mathfrak{B}_{u_n^+}-\frac1p\mathfrak{C}_{u_n^+}\notag\\
&\geq&\frac{b}{4}\mathfrak{A}_{u_n^{*,+}}^2+\frac{a}{2}\mathfrak{A}_{u_n^{*,+}}+\frac\lambda 2\mathfrak{B}_{u_n^{*,+}}-\frac1p\mathfrak{C}_{u_n^{*,+}}\notag\\
&=&F_{u_{n}^{*,+}}(1)\notag\\
&\geq&F_{u_{n}^{*,+}}(t_{n,+})\notag\\
&\geq&m^+.\label{eq0028}
\end{eqnarray}
Therefore, $\{v_n^\pm\}$ are also minimizing sequences of $\mathcal{E}(u)$ for $m^\pm$, respectively.  By Lemma~\ref{lem0003} and \eqref{eq0014},  $\{v_n^\pm\}$ are bounded in $H^1_r(\bbr^N)$.  Therefore, without loss of generality, we may assume that $v_n^\pm=v_0^\pm+o_n(1)$ weakly in $H^1_r(\bbr^N)$.  Thanks to the Sobolev embedding theorem, we also have that $v_n^\pm=v_0^\pm+o_n(1)$ strongly in $L^q(\bbr^N)(2\leq q<2^*)$.  Clearly, $v_0^\pm\in\mathcal{B}_-$, which together with Lemma~\ref{lem0001}, implies that there exist unique $0<t_{0,+}<t_{0,-}$ such that $v_0^{*,\pm}=v_0^\pm(t_{0,\pm}x)\in\mathcal{M}^\pm$.  Since $v_n^\pm=v_0^\pm+o_n(1)$ weakly in $H^1_r(\bbr^N)$, we must have that $F_{v_0^{\pm}}'(1)\geq0$.  It follows from Remark~\ref{rmk0001} that $t_{0,+}\leq1\leq t_{0,-}$.  Now, by similar arguments as used for \eqref{eq0027} and \eqref{eq0028}, we can see that $m^\pm$ can be attained by $v_0^{*,\pm}$.  Note that $|v_0^{*,\pm}|$ also attain $m^\pm$ by the definitions of $\mathcal{M}^\pm$, respectively.  Thus, $m^\pm$ can be attained by some $u^\pm_{a,b,\lambda}$, which are both radial and nonnegative in $\bbr^N$.
\end{proof}

Now, we can give the proof of Theorem~\ref{thm0001}.

\noindent\textbf{Proof of Theorem~\ref{thm0001}.}\quad $(1)$\quad Since $N\geq5$, if $\mathcal{F}_{a,b}(U_\lambda)<1$, then by the fact that $U_\lambda\in\mathcal{M}^*$, we can see that $U_\lambda\in\mathcal{B}_-$.  It follows from Lemma~\ref{lem0005} that $m^\pm$ can be attained by some $u^\pm_{a,b,\lambda}$, which are both radial and nonnegative in $\bbr^N$.  By a direct calculation, we can see that the function $H(s)=\frac{a}{N}s+\frac{(4-N)b}{4N}s^2$ is strictly increasing on $(0, \frac{2a}{(N-4)b})$ and strictly decreasing on $(\frac{2a}{(N-4)b}, +\infty)$.  Note that $\mathcal{E}(u)=\frac{a}{N}\mathfrak{A}_{u}+\frac{(4-N)b}{4N}\mathfrak{A}_{u}^2$ for all $u\in\mathcal{M}$.  Thus, by Lemma~\ref{lem0003}, $u_{a,b,\lambda}^\pm$ are both local minimum points of $\mathcal{E}(u)$ on $\mathcal{M}$.  Moreover, by Remark~\ref{rmk0001}, we also have that $m^+=m$.
Thanks to Lemma~\ref{lem0004} and the maximum principle, $u_{a,b,\lambda}^\pm$ are two radial positive solutions to the autonomous form of $(\mathcal{P}_{a,b})$.

$(2)$\quad If $\mathcal{F}_{a,b}(U_\lambda)=1$, then by the fact that $U_\lambda\in\mathcal{M}^*$, we can see that $U_\lambda\in\mathcal{B}_0$, where $\mathcal{B}_0$ is given by \eqref{eqnew0011}.  Consider the function $v_t(x)=U_\lambda(tx)$, where $t=\bigg(\frac{(N-4)b\mathfrak{A}_{U_\lambda}}{2a}\bigg)^{\frac{1}{N-2}}$.  Then by a direct calculation, we can see that the following equation holds in the weak sense
\begin{eqnarray*}
-\bigg(a+b\mathfrak{A}_{v_t}\bigg)\Delta v_t&=&-\bigg(a+b\mathfrak{A}_{U_\lambda}t^{2-N}\bigg)t^{2-N}\Delta U_\lambda\\
&=&\bigg(a+b\mathfrak{A}_{U_\lambda}t^{2-N}\bigg)t^{2-N}(U_\lambda^{p-1}-\lambda U_\lambda)\\
&=&\bigg(a+b\mathfrak{A}_{U_\lambda}t^{2-N}\bigg)t^{2}(v_t^{p-1}-\lambda v_t)\\
&=&\frac{(N-2)a}{N-4}t^2(v_t^{p-1}-\lambda v_t)\\
&=&v_t^{p-1}-\lambda v_t.
\end{eqnarray*}
Thus, $v_t(x)$ is a radial positive solution to the autonomous form of $(\mathcal{P}_{a,b})$.  Suppose $\mathcal{M}^+\cup\mathcal{M}^-\not=\emptyset$.  Then by Lemma~\ref{lem0001}, there exists $u\in\mathcal{B}_-\cap\mathcal{C}$.  It follows from Lemma~\ref{lem0002} that there exists $t=\bigg(\frac{\frac1p\mathfrak{B}_{u}-\frac\lambda2\mathfrak{C}_{u}}{\frac{N-2}{2N}\mathfrak{A}_{u}}\bigg)^{\frac12}$ such that $u_t\in\mathcal{M}^*$.   By a similar argument as used in the proof of Lemma~\ref{lem0005}, we can see that $I(U_\lambda)=\inf_{\mathcal{M}^*}I(u)$.
Note that $I(u)=\frac{1}{N}\mathfrak{A}_{u}$ for $u\in\mathcal{M}^*$.  Therefore, we must have
\begin{eqnarray}\label{eq0031}
\mathfrak{A}_{U_\lambda}=\min\{\mathfrak{A}_{u}\mid u\in \mathcal{M}^*\},
\end{eqnarray}
which together with a similar argument as used in \eqref{eq0053} and the fact that $u\in\mathcal{B}_-$, implies $U_\lambda\in\mathcal{B}_-$.  It is impossible since we already have $U_\lambda\in\mathcal{B}_0$.  Thus, thanks to Lemma~\ref{lem0001}, if $\mathcal{F}_{a,b}(U_\lambda)=1$, then $\mathcal{M}=\mathcal{M}^0$.

$(3)$\quad Suppose the autonomous form of $(\mathcal{P}_{a,b})$ has a solution $u$ if $\mathcal{F}_{a,b}(U_\lambda)>1$.  Then by the fact that $U_\lambda\in\mathcal{M}^*$, we can see that $U_\lambda\in\mathcal{B}_+$, where $\mathcal{B}_+$ is given by \eqref{eqnew0012}.  It follows that $(\mathcal{P}_{a,b,\lambda})$ has a solution $u$ if $U_\lambda\in\mathcal{B}_+$.  Note that we must have that $u\in\mathcal{M}\cap\mathcal{C}$.  Thus, by Lemmas~\ref{lem0002} and \ref{lem0001}, we can see that $\mathcal{B}(u)\leq1$ and there exists $t=\bigg(\frac{\frac1p\mathfrak{B}_{u}-\frac\lambda2\mathfrak{C}_{u}}{\frac{N-2}{2N}\mathfrak{A}_{u}}\bigg)^{\frac12}$ such that $u_t\in\mathcal{M}^*$.  By a direct calculation, we can see that
\begin{eqnarray*}
1\geq\mathcal{B}(u)=\mathcal{F}_{a,b}(u_t)\geq\mathcal{F}_{a,b}(U_\lambda),
\end{eqnarray*}
which together with \eqref{eq0031}, implies that $U_\lambda\in\mathcal{B}_-\cup\mathcal{B}_0$.  It is impossible since we have $U_\lambda\in\mathcal{B}_+$.
\qquad\raisebox{-0.5mm}{%
\rule{1.5mm}{4mm}}\vspace{6pt}

\subsection{The scaling technique}
In this section, we will study $(\mathcal{P}_{a,b,\lambda})$ by the scaling technique.  Our main observation is the following two lemmas.
\begin{lemma}\label{lem0006}
Let $a,b,\lambda>0$, $N\geq3$ and $2<p<2^*$.  Suppose $u$ is a solution of $(\mathcal{P}_{a,b,\lambda})$, then there exist $s,t$ and $\alpha>0$ such that $U_{\alpha}(x)=su(tx)$ up to a translation.
\end{lemma}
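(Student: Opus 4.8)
The plan is to reduce the Kirchhoff equation for $u$ to the standard Schr\"odinger equation \eqref{eqnew1006} by choosing the scaling parameters to absorb the nonlocal coefficient. Suppose $u$ solves $(\mathcal{P}_{a,b,\lambda})$, so that in the weak sense
\begin{equation*}
-\bigl(a+b\mathfrak{A}_u\bigr)\Delta u+\lambda u=|u|^{p-2}u\quad\text{in }\bbr^N.
\end{equation*}
Since $a+b\mathfrak{A}_u>0$ is a fixed positive constant once $u$ is given, this is precisely a stationary Schr\"odinger equation with constant coefficients. First I would set $v(x)=su(tx)$ for parameters $s,t>0$ to be chosen, and substitute into the equation. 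Using $\Delta v(x)=st^2(\Delta u)(tx)$, the equation for $u$ evaluated at $tx$ gives, after multiplying through by $s$ and rearranging, an equation of the form
\begin{equation*}
-(a+b\mathfrak{A}_u)t^2\,\Delta v+\lambda v=s^{2-p}|v|^{p-2}v.
\end{equation*}

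The strategy is then to choose $s$ and $t$ so that the coefficient of $-\Delta v$ equals $1$ and the coefficient of the nonlinearity equals $1$ as well, turning the equation into $-\Delta v+\alpha v=|v|^{p-2}v$ for an appropriate $\alpha>0$. Concretely, I would impose $(a+b\mathfrak{A}_u)t^2=s^{p-2}$, which is the single relation between $s$ and $t$ that normalizes the nonlinear term after dividing by $s^{2-p}$; this leaves one free parameter. With that relation the transformed equation reads $-\Delta v+\frac{\lambda}{s^{p-2}}\,(a+b\mathfrak{A}_u)t^2\,\cdot\,$\dots, so I would keep careful track and identify the resulting linear coefficient as $\alpha=\lambda(a+b\mathfrak{A}_u)t^2/s^{p-2}=\lambda$ after using the normalization, or more flexibly set $\alpha$ by the remaining free scaling. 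The cleanest route is to fix $t$ so that $(a+b\mathfrak{A}_u)t^2=1$, i.e. $t=(a+b\mathfrak{A}_u)^{-1/2}$, which immediately yields $-\Delta v+\lambda v=s^{2-p}|v|^{p-2}v$; then choosing $s=1$ already gives a solution of \eqref{eqnew1006} with parameter $\lambda$. To allow the general statement with an arbitrary $\alpha$, I would instead leave $s$ free and read off $\alpha=\lambda s^{p-2}/\bigl((a+b\mathfrak{A}_u)t^2\bigr)$ consistently.

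Once $v=su(tx)$ satisfies $-\Delta v+\alpha v=|v|^{p-2}v$, the next step invokes the known classification of solutions to this classical equation: by the symmetry and uniqueness results (Gidas--Ni--Nirenberg moving planes together with the Kwong-type uniqueness for the positive solution), any positive $H^1$ solution equals $U_\alpha$ up to a translation. Since $u$ is positive (being a solution of $(\mathcal{P}_{a,b,\lambda})$ under consideration) so is $v$, hence $v=U_\alpha(\cdot-x_0)$ for some $x_0\in\bbr^N$, giving $su(tx)=U_\alpha(x-x_0)$, which is the assertion up to a translation. The main obstacle I anticipate is not the scaling algebra, which is routine, but two bookkeeping points: first, ensuring the nonlocal coefficient $a+b\mathfrak{A}_u$ is treated correctly as a constant frozen at the given solution $u$ (so that the equation genuinely becomes constant-coefficient rather than remaining nonlinear in the Kirchhoff sense), and second, invoking the uniqueness-up-to-translation of the positive ground state $U_\alpha$ for \eqref{eqnew1006} in all dimensions $N\geq3$ and all $2<p<2^*$, which is the one external ingredient the argument truly depends on. I would state that uniqueness result explicitly as the justification for passing from ``$v$ solves the scalar equation'' to ``$v=U_\alpha$ up to translation.''
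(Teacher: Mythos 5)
Your proposal follows essentially the same route as the paper's proof: freeze the nonlocal coefficient $a+b\mathfrak{A}_u$ at the given solution so the equation becomes a constant-coefficient Schr\"odinger equation, rescale via $v(x)=su(tx)$, and invoke the uniqueness (up to translation) of the positive solution of \eqref{eqnew1006}; the paper carries out the identical computation in weak form, choosing $t$ from $s^{2-p}t^2=a+b\mathfrak{A}_u$ and setting $\alpha=\lambda t^2/(a+b\mathfrak{A}_u)$. One algebraic slip in your bookkeeping: since $\Delta v(x)=st^{2}(\Delta u)(tx)$, the transformed equation reads $-\,(a+b\mathfrak{A}_u)t^{-2}\Delta v+\lambda v=s^{2-p}|v|^{p-2}v$ — the Laplacian coefficient carries $t^{-2}$, not $t^{2}$ — so your ``cleanest route'' choice should be $t=(a+b\mathfrak{A}_u)^{1/2}$ rather than $(a+b\mathfrak{A}_u)^{-1/2}$, and the correct normalization is exactly the paper's relation $s^{2-p}t^{2}=a+b\mathfrak{A}_u$ with $\alpha=\lambda t^{2}/(a+b\mathfrak{A}_u)$. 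This slip is harmless to the structure of the argument, since the lemma only asserts existence of some $s,t,\alpha>0$ and the relation determining $t$ is solvable for any fixed $s>0$; with the exponent corrected, your proof coincides with the paper's, including your (correctly flagged) reliance on positivity of $u$ and on the Gidas--Ni--Nirenberg plus Kwong-type uniqueness for \eqref{eqnew1006}, which the paper uses implicitly through ``the uniqueness of $U_\alpha$.''
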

\begin{proof}
Let $v(x)=su(tx)$, where $s,t>0$ are constants.  Then for every $\varphi\in \h$, we have
\begin{eqnarray*}
&&\int_{\bbr^N}\nabla v(x)\nabla \varphi(x)dx\\
&=&\int_{\bbr^N}\nabla su(tx)\nabla \varphi(x)dx\\
&=&t^2s\int_{\bbr^N}\nabla u(x)\nabla \varphi(\frac{x}{t})d\frac{x}{t}\\
&=&\frac{t^{2-N}s}{(a+b\mathfrak{A}_{u})}\int_{\bbr^N}(u^{p-1}(x)-\lambda u(x))\varphi(\frac{x}{t})dx\\
&=&\frac{t^{2}s}{(a+b\mathfrak{A}_{u})}\int_{\bbr^N}(u^{p-1}(x)-\lambda u(x))\varphi(\frac{x}{t})d\frac{x}{t}\\
&=&\frac{t^{2}}{(a+b\mathfrak{A}_{u})}\int_{\bbr^N}(s^{2-p}v^{p-1}(x)-\lambda v(x))\varphi(x)dx.
\end{eqnarray*}
Since $a,b>0$, for fixed $s>0$, the equation $s^{2-p}t^2=a+b\mathfrak{A}_{u}$ must have a unique solution $t>0$.  Let $\alpha=\frac{\lambda t^2}{a+b\mathfrak{A}_{u}}$.  Then by the uniqueness of $U_\alpha$, we can see that $U_{\alpha}(x)=su(tx)$ up to a translation.
\end{proof}

\begin{lemma}\label{lemnew0001}
Let $a,b,\lambda>0$, $N\geq3$ and $2<p<2^*$.  Then $\bigg(\frac{\lambda}{\alpha}\bigg)^{\frac{1}{p-2}}U_\alpha(tx)$ is a positive solution to the autonomous form of $(\mathcal{P}_{a,b})$ if and only if there exist two positive constants $t$ and $\alpha$ satisfying $\bigg(\frac{\alpha}{\lambda}\bigg)^{\frac12}t=\gamma>0$ and $\gamma$ is the solution of \eqref{eqnew0005}.
\end{lemma}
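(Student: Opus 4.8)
The plan is to verify the claimed equivalence by direct substitution, reducing the nonlinear Kirchhoff equation to a single scalar algebraic identity. The form of the ansatz is exactly the one dictated by Lemma~\ref{lem0006}, which shows every solution of the autonomous problem is such a rescaling of some $U_\alpha$. First I would set $s=\left(\frac{\lambda}{\alpha}\right)^{\frac{1}{p-2}}$ and $w(x)=sU_\alpha(tx)$, and compute the Laplacian of $w$ from the equation $-\Delta U_\alpha+\alpha U_\alpha=U_\alpha^{p-1}$. A direct differentiation gives $-\Delta w=-\alpha t^2 w+t^2 s^{2-p}w^{p-1}$, using $U_\alpha(tx)=s^{-1}w$ and $U_\alpha^{p-1}(tx)=s^{1-p}w^{p-1}$.

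Next I would substitute this expression into the autonomous equation $-(a+b\mathfrak{A}_w)\Delta w+\lambda w=w^{p-1}$ and collect the coefficients of the two functions $w$ and $w^{p-1}$, which are linearly independent since $p-2\neq0$. Matching them produces the two requirements $(a+b\mathfrak{A}_w)\alpha t^2=\lambda$ and $(a+b\mathfrak{A}_w)t^2 s^{2-p}=1$. The decisive point, and the reason for the precise exponent $\frac{1}{p-2}$ in the ansatz, is that the choice $s^{p-2}=\frac{\lambda}{\alpha}$ makes these two requirements identical; hence $w$ solves the equation if and only if the single scalar condition $(a+b\mathfrak{A}_w)t^2=\frac{\lambda}{\alpha}$ holds. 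Positivity of $w$ is then automatic from $U_\alpha>0$ and $s>0$.

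Then I would convert this scalar condition into \eqref{eqnew0005}. From $\nabla w=st\nabla U_\alpha(tx)$ and the change of variables $y=tx$ the Dirichlet integral scales as $\mathfrak{A}_w=s^2 t^{2-N}\mathfrak{A}_{U_\alpha}$, so the condition reads $at^2+bs^2t^{4-N}\mathfrak{A}_{U_\alpha}=\frac{\lambda}{\alpha}$. Introducing $\gamma=\left(\frac{\alpha}{\lambda}\right)^{\frac12}t$ and substituting $t^2=\frac{\lambda}{\alpha}\gamma^2$ and $t^{4-N}=\left(\frac{\lambda}{\alpha}\right)^{\frac{4-N}{2}}\gamma^{4-N}$, then dividing through by $\frac{\lambda}{\alpha}$, turns it into $a\gamma^2+b\,C\,\gamma^{4-N}=1$ with $C=s^2\left(\frac{\lambda}{\alpha}\right)^{\frac{2-N}{2}}\mathfrak{A}_{U_\alpha}$.

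The main obstacle, and really the only genuine computation, is to show that $C$ is independent of $\alpha$ and equals the Dirichlet integral of $U_\lambda$. For this I would invoke the homogeneity relation $U_\alpha(x)=\alpha^{\frac{1}{p-2}}U_1(\sqrt\alpha x)$, which yields $\mathfrak{A}_{U_\alpha}=\alpha^{\frac{2}{p-2}+1-\frac N2}\mathfrak{A}_{U_1}$. Substituting $s^2=\left(\frac\lambda\alpha\right)^{\frac{2}{p-2}}$ and carefully tracking the exponents, the powers of $\alpha$ cancel exactly and $C=\lambda^{\frac{2}{p-2}+1-\frac N2}\mathfrak{A}_{U_1}=\mathfrak{A}_{U_\lambda}$, the ($\alpha$-independent) Dirichlet integral of $U_\lambda$ appearing in \eqref{eqnew0005}. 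Since every step in the chain is an equivalence, reading it forward gives sufficiency and reading it backward gives necessity, which completes the argument.
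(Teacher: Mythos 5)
Your proof is correct and takes essentially the same route as the paper's: substitute the ansatz, reduce to the scalar condition $(a+b\mathfrak{A}_w)t^2=\lambda/\alpha$ by matching the coefficients of $w$ and $w^{p-1}$ (the paper does this in weak form with $s$ left free, recovering $s=(\lambda/\alpha)^{\frac{1}{p-2}}$ as a consequence rather than an input), then use the homogeneity relation $\mathfrak{A}_{U_\alpha}=\big(\frac{\alpha}{\lambda}\big)^{\frac{2}{p-2}-\frac{N-2}{2}}\mathfrak{A}_{U_\lambda}$ and the substitution $\gamma=\big(\frac{\alpha}{\lambda}\big)^{\frac12}t$ to arrive at \eqref{eqnew0005}. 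One remark: your identification of the coefficient in \eqref{eqnew0005} as the Dirichlet integral $\mathfrak{A}_{U_\lambda}$ agrees with the paper's own derivation, the printed $\int_{\bbr^N}|U_\lambda|^2dx$ in \eqref{eqnew0005} being a typo for $\int_{\bbr^N}|\nabla U_\lambda|^2dx$.
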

\begin{proof}
Let $u(x)=sU_{\alpha}(tx)$, then for every $\varphi\in\h$, we have
\begin{eqnarray*}
&&(a+b\mathfrak{A}_{u})\int_{\bbr^N}\nabla u(x)\nabla \varphi(x)dx\\
&=&(a+bt^{2-N}s^2\mathfrak{A}_{U_{\alpha}})\int_{\bbr^N}\nabla u(x)\nabla \varphi(x)dx\\
&=&(a+bt^{2-N}s^2\mathfrak{A}_{U_{\alpha}})t^{2-N}\int_{\bbr^N}\nabla sU_{\alpha}(x)\nabla \varphi(\frac{x}{t})dx\\
&=&(a+bt^{2-N}s^2\mathfrak{A}_{U_{\alpha}})t^{2-N}s\int_{\bbr^N}(U_{\alpha}^{p-1}(x)-\alpha U_{\alpha}(x))\varphi(\frac{x}{t})dx\\
&=&(a+bt^{2-N}s^2\mathfrak{A}_{U_{\alpha}})t^{2}\int_{\bbr^N}(s^{2-p}u^{p-1}(x)-\alpha u(x))\varphi(x)dx.
\end{eqnarray*}
Thus, $u(x)$ is a solution to the autonomous form of $(\mathcal{P}_{a,b})$ if and only if
$$
\left\{\aligned &1=(a+bt^{2-N}s^2\mathfrak{A}_{U_{\alpha}})t^{2}s^{2-p},\\
&\lambda=\alpha(a+bt^{2-N}s^2\mathfrak{A}_{U_{\alpha}})t^{2},
\endaligned
\right.
$$
which is equivalent to $s=\bigg(\frac{\lambda}{\alpha}\bigg)^{\frac{1}{p-2}}$ and $t$ is a solution of the following equation
\begin{eqnarray}\label{eq9956}
\frac{\lambda}{\alpha}=a t^2+b(\frac{\lambda}{\alpha})^{\frac{2}{p-2}}\mathfrak{A}_{U_{\alpha}}t^{4-N}.
\end{eqnarray}
Thanks to \eqref{eqnew0010}, we can see that $\mathfrak{A}_{U_{\alpha}}=\bigg(\frac{\alpha}{\lambda}\bigg)^{\frac{2}{p-2}-\frac{N-2}{2}}\mathfrak{A}_{U_{\lambda}}$.  It follows from \eqref{eq9956} that
\begin{eqnarray*}
1&=&a\bigg(\bigg(\frac{\alpha}{\lambda}\bigg)^{\frac12}t\bigg)^2+b\mathfrak{A}_{U_{\lambda}}\bigg(\frac{\alpha}{\lambda}\bigg)^{1-\frac{N-2}{2}}t^{4-N}\\
&=&a\bigg(\bigg(\frac{\alpha}{\lambda}\bigg)^{\frac12}t\bigg)^2+b\bigg(\bigg(\frac{\alpha}{\lambda}\bigg)^{\frac12}t\bigg)^{4-N}\mathfrak{A}_{U_{\lambda}}.
\end{eqnarray*}
Thus, let $\bigg(\frac{\alpha}{\lambda}\bigg)^{\frac12}t=\gamma$, then $\gamma$ is the solution of \eqref{eqnew0005} if and only if $\bigg(\frac{\lambda}{\alpha}\bigg)^{\frac{1}{p-2}}U_\alpha(tx)$ is a positive solution to the autonomous form of $(\mathcal{P}_{a,b})$.
\end{proof}

Due to Lemmas~\ref{lem0006} and \ref{lemnew0001}, we can give a proof of Theorem~\ref{thm0002}.

\noindent\textbf{Proof of Theorem~\ref{thm0002}.}\quad By Lemmas~\ref{lem0006} and \ref{lemnew0001}, we can see that the solution to the autonomous form of $(\mathcal{P}_{a,b})$ must be of the form $\bigg(\frac{\lambda}{\alpha}\bigg)^{\frac{1}{p-2}}U_\alpha(tx)$, where $U_\alpha$ is the unique positive radial solution of \eqref{eqnew1006} for $\lambda=\alpha$, $\alpha$ and $t$ are two positive constants satisfying $\bigg(\frac{\alpha}{\lambda}\bigg)^{\frac12}t=\gamma>0$ and $\gamma$ is the solution of \eqref{eqnew0005}.
Moreover, due to Lemma~\ref{lemnew0001}, the number of positive solutions to $(\mathcal{P}_{a,b,\lambda})$ equals to the number of solutions to the equation \eqref{eqnew0005}.
\qquad\raisebox{-0.5mm}{%
\rule{1.5mm}{4mm}}\vspace{6pt}

We close this section by

\noindent\textbf{Proof of Theorem~\ref{thm0004}.}\quad By making a direct observation on the equation \eqref{eqnew0005}, we can see that \eqref{eqnew0005} has a unique solution for all $a,b,\lambda>0$ in the case $N=3$; \eqref{eqnew0005} has a unique solution for $b\mathfrak{A}_{U_{\lambda}}<1$ and has no solution for $b\mathfrak{A}_{U_{\lambda}}\geq1$ in the case $N=4$; \eqref{eqnew0005} has exact two solutions for $\mathcal{F}_{a,b}(U_\lambda)<1$, has a unique solution for $\mathcal{F}_{a,b}(U_\lambda)=1$ and has no solution for $\mathcal{F}_{a,b}(U_\lambda)>1$ in the cases $N\geq5$.  Thus, the conclusion follows immediately from Theorems~\ref{thm0001} and \ref{thm0002} and the results in \cite{A12,A13}.
\qquad\raisebox{-0.5mm}{%
\rule{1.5mm}{4mm}}\vspace{6pt}

\section{The non-autonomous case}

\subsection{The Nehari manifold $\mathcal{N}_V$ for $p\in(2, 4]\cap(2, 2^*)$}
We first consider the cases $p\in(2, 4)\cap(2,2^*)$.  Let
\begin{eqnarray}
\mathcal{D}&=&\{u\in\h\backslash\{0\}\mid \mathcal{G}(u)<0\}.\label{eqnew9001}
\end{eqnarray}
where
\begin{eqnarray*}
\mathcal{G}(u)=a\mathfrak{A}_u+\mathfrak{C}_{u,V}-\frac{4-p}{2}\bigg(\frac{(p-2)\mathfrak{B}_u}{2b\mathfrak{A}_u^2}\bigg)^{\frac{p-2}{4-p}}\mathfrak{B}_u.
\end{eqnarray*}
Then our first observation on the Nehari manifold by the fibering map $G_u(t)$ is the following.
\begin{lemma}\label{lemnew0002}
Let $p\in(2, 4)\cap(2,2^*)$ and the condition $(V)$ hold.  Then there exist unique $0<t^-<t^+<+\infty$ such that $t^\pm u\in\mathcal{N}_V^\pm$ for $u\in\mathcal{D}$, where $\mathcal{N}_V^\pm$ are given by \eqref{eq1005} and \eqref{eq1007}.  Moreover, $G_u(s)$ is strictly increasing on $(0, t^-)$, strictly decreasing on $(t^-, t^+)$ and strictly increasing on $(t^+, +\infty)$.
\end{lemma}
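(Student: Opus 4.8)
The plan is to study the fibering map $G_u(t)=\mathcal{J}_V(tu)$ directly as a one–variable function on $(0,+\infty)$. First I would differentiate and factor out the trivial zero at the origin, writing $G_u'(t)=t\,h_u(t)$ with
$h_u(t)=b\mathfrak{A}_u^2t^2+(a\mathfrak{A}_u+\mathfrak{C}_{u,V})-\mathfrak{B}_ut^{p-2}$,
so that the stationary points of $G_u$ in $(0,+\infty)$ are exactly the zeros of $h_u$, and in particular $t u\in\mathcal{N}_V$ iff $h_u(t)=0$. Using the condition $(V)$, which gives $\mathfrak{C}_{u,V}\geq v_0\mathfrak{C}_u>0$, together with $\mathfrak{A}_u,\mathfrak{B}_u>0$ for $u\neq0$, one checks that $h_u(0^+)=a\mathfrak{A}_u+\mathfrak{C}_{u,V}>0$ (the two $t$–dependent terms vanish as $t\to0^+$ because $p-2>0$), while $2>p-2$ for $p<4$ forces $h_u(t)\to+\infty$ as $t\to+\infty$.

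Next I would locate the single interior minimum of $h_u$. Since $h_u'(t)=2b\mathfrak{A}_u^2t-(p-2)\mathfrak{B}_ut^{p-3}$ vanishes at exactly one point $t_0=\bigl(\frac{(p-2)\mathfrak{B}_u}{2b\mathfrak{A}_u^2}\bigr)^{1/(4-p)}$, with $h_u'<0$ on $(0,t_0)$ (the term $t^{p-3}$ dominates near the origin) and $h_u'>0$ on $(t_0,+\infty)$, the function $h_u$ is strictly decreasing then strictly increasing, so its global minimum on $(0,+\infty)$ equals $h_u(t_0)$. The key algebraic step is to substitute $t_0$ back into $h_u$ and simplify, which yields precisely $h_u(t_0)=\mathcal{G}(u)$. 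Hence $u\in\mathcal{D}$ is equivalent to $h_u(t_0)<0$; combined with $h_u(0^+)>0$, $h_u(+\infty)=+\infty$, and the strict monotonicity on each side of $t_0$, this produces exactly two simple zeros $0<t^-<t_0<t^+<+\infty$ of $h_u$, which are the only candidates.

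Because $G_u'(t)=t\,h_u(t)$ has the sign of $h_u(t)$ on $(0,+\infty)$, the zero structure just obtained gives $G_u'>0$ on $(0,t^-)$, $G_u'<0$ on $(t^-,t^+)$ and $G_u'>0$ on $(t^+,+\infty)$, which is exactly the claimed monotonicity of $G_u$. To identify the manifolds I would use the scaling identity $G_{tu}(s)=G_u(ts)$, which differentiates to $G_{tu}''(1)=t^2G_u''(t)$; thus the sign of $G_{t^\pm u}''(1)$ agrees with that of $G_u''(t^\pm)$. At a zero $t^*$ of $h_u$ one has $G_u''(t^*)=h_u(t^*)+t^*h_u'(t^*)=t^*h_u'(t^*)$, whose sign is that of $h_u'(t^*)$: negative at $t^-<t_0$ and positive at $t^+>t_0$. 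Therefore $t^-u\in\mathcal{N}_V^-$ and $t^+u\in\mathcal{N}_V^+$, with $t^-<t^+$, as required.

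The only genuinely delicate point is the identity $h_u(t_0)=\mathcal{G}(u)$; everything else is elementary single–variable calculus. I expect this substitution to be the main computational obstacle, since one must combine the powers $t_0^2$ and $t_0^{p-2}$ in terms of $K=\frac{(p-2)\mathfrak{B}_u}{2b\mathfrak{A}_u^2}$ and observe that the quadratic and the $t^{p-2}$ contributions collapse to the single coefficient $-\frac{4-p}{2}$ appearing in $\mathcal{G}(u)$. Once that identity is in hand, the equivalence between $\mathcal{D}$ and the two–root regime, the strict monotonicity of $G_u$, and the classification of $t^\pm u$ into $\mathcal{N}_V^\mp$ all follow at once.
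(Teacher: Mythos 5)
Your proposal is correct and takes essentially the same route as the paper's proof: factor $G_u'(t)=t\,g_u(t)$ with $g_u(t)=b\mathfrak{A}_u^2t^2-\mathfrak{B}_ut^{p-2}+a\mathfrak{A}_u+\mathfrak{C}_{u,V}$, locate the unique minimizer $t^*=\bigl(\frac{(p-2)\mathfrak{B}_u}{2b\mathfrak{A}_u^2}\bigr)^{\frac{1}{4-p}}$ of $g_u$, verify $g_u(t^*)=\mathcal{G}(u)$, and read off the two simple zeros and the sign pattern of $G_u'$ from $u\in\mathcal{D}$; your additional steps (endpoint behavior of $g_u$, the scaling identity $G_{tu}''(1)=t^2G_u''(t)$, and $G_u''(t^\pm)=t^\pm g_u'(t^\pm)$ at the zeros) simply make explicit what the paper compresses into ``the conclusion follows immediately from the definition of $\mathcal{D}$.'' The only blemish is the slip in your closing sentence, where ``$t^\pm u$ into $\mathcal{N}_V^\mp$'' should read $t^\pm u\in\mathcal{N}_V^\pm$, consistent with the correct classification you derived earlier.
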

\begin{proof}
By a direct calculation, we can see that $G'_u(t)=t(b\mathfrak{A}_u^2t^2-\mathfrak{B}_ut^{p-2}+a\mathfrak{A}_u+\mathfrak{C}_{u,V})$.  Set
\begin{eqnarray*}\label{eqnew9002}
g_u(t)=b\mathfrak{A}_u^2t^2-\mathfrak{B}_ut^{p-2}+a\mathfrak{A}_u+\mathfrak{C}_{u,V}.
\end{eqnarray*}
Then by the condition $(V)$, we can see that $g_u(t)$ is strictly decreasing on $(0, t^*)$ and strictly increasing on $(t^*, +\infty)$, where
\begin{eqnarray}\label{eqnew9007}
t^*=\bigg(\frac{(p-2)\mathfrak{B}_u}{2b\mathfrak{A}_u^2}\bigg)^{\frac{1}{4-p}}.
\end{eqnarray}
It follows that $g_u(t^*)=\min_{t\geq0}g_u(t)=\mathcal{G}(u)$.  Now, the conclusion follows immediately from the definition of $\mathcal{D}$.
\end{proof}

Next, we consider the case $p=4<2^*$, which implies $N=3$.  We define
\begin{eqnarray}\label{eqnew9003}
\mathcal{Q}=\{u\in\h\backslash\{0\}\mid b\mathfrak{A}_u^2-\mathfrak{B}_u<0\}.
\end{eqnarray}
Then we have the following.
\begin{lemma}\label{lemnew0003}
Let $N=3$, $p=4$ and the condition $(V)$ hold.  Then there exists unique $0<t^0<+\infty$ such that $t^0 u\in\mathcal{N}_V$ for $u\in\mathcal{Q}$, where $\mathcal{N}_V$ is given by \eqref{eq0102}.  Moreover, $G_u(s)$ is strictly increasing on $(0, t^0)$ and strictly decreasing on $(t^0, +\infty)$.
\end{lemma}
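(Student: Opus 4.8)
The plan is to analyze the fibering map $G_u(t)=\mathcal{J}_V(tu)$ directly, exploiting the collapse that happens exactly when $p=4$. First I would substitute $p=4$ into the expression for $G_u(t)$ and observe that the quartic Kirchhoff term and the nonlinear term both scale like $t^4$, so they merge: in the notation of the paper,
$$
G_u(t)=\frac{t^4}{4}\big(b\mathfrak{A}_u^2-\mathfrak{B}_u\big)+\frac{t^2}{2}\big(a\mathfrak{A}_u+\mathfrak{C}_{u,V}\big).
$$
Thus, up to the change of variable $\tau=t^2$, $G_u$ is a pure quadratic in $\tau$, which already signals that there can be at most one positive critical point — in sharp contrast with the genuinely richer $p\in(2,4)$ case handled in Lemma~\ref{lemnew0002}, where the competing power $t^{p-2}$ produces two.

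Differentiating and factoring out $t$ (equivalently, specializing the identity $G_u'(t)=t\,g_u(t)$ from Lemma~\ref{lemnew0002} to $p=4$) gives
$$
G_u'(t)=t\Big[\big(a\mathfrak{A}_u+\mathfrak{C}_{u,V}\big)-\big(\mathfrak{B}_u-b\mathfrak{A}_u^2\big)t^2\Big].
$$
Next I would use the two standing hypotheses to fix the signs of the two coefficients. Condition $(V)$ gives $V(x)\geq v_0>0$, hence $\mathfrak{C}_{u,V}\geq v_0\mathfrak{C}_u>0$ for $u\neq0$ and so $a\mathfrak{A}_u+\mathfrak{C}_{u,V}>0$; the definition of $\mathcal{Q}$ in \eqref{eqnew9003} reads precisely $\mathfrak{B}_u-b\mathfrak{A}_u^2>0$ for $u\in\mathcal{Q}$. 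Writing $A=a\mathfrak{A}_u+\mathfrak{C}_{u,V}>0$ and $B=\mathfrak{B}_u-b\mathfrak{A}_u^2>0$, the bracket $A-Bt^2$ vanishes at the single positive value
$$
t^0=\left(\frac{a\mathfrak{A}_u+\mathfrak{C}_{u,V}}{\mathfrak{B}_u-b\mathfrak{A}_u^2}\right)^{\frac12},
$$
which yields both existence and uniqueness of $t^0>0$ with $G_u'(t^0)=0$; since $G_u'(t)=0$ is equivalent to $tu\in\mathcal{N}_V$, this gives $t^0u\in\mathcal{N}_V$.

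Finally, the monotonicity claim is immediate from the same factorization: for $0<t<t^0$ one has $A-Bt^2>0$, so $G_u'(t)>0$ and $G_u$ is strictly increasing, while for $t>t^0$ one has $A-Bt^2<0$, so $G_u'(t)<0$ and $G_u$ is strictly decreasing. I do not anticipate a genuine obstacle: the entire lemma reduces to a one-variable sign analysis. The only point requiring care — and the reason condition $(V)$ is invoked rather than merely $V\geq0$ — is the strict positivity of $a\mathfrak{A}_u+\mathfrak{C}_{u,V}$, which guarantees that $t^0$ is a nondegenerate interior maximum of $G_u$ and that each ray meets $\mathcal{N}_V$ exactly once; everything else is a routine computation.
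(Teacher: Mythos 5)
Your proof is correct and takes essentially the same route as the paper, which simply states that the proof is ``similar to that of Lemma~\ref{lemnew0002}'': specializing the factorization $G_u'(t)=t\,g_u(t)$ to $p=4$, where $g_u$ becomes the quadratic $(b\mathfrak{A}_u^2-\mathfrak{B}_u)t^2+a\mathfrak{A}_u+\mathfrak{C}_{u,V}$, and reading off the unique sign change. Your explicit formula for $t^0$ and the sign analysis are exactly the intended argument, filled in with the details the paper omits.
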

\begin{proof}
The proof is similar to that of Lemma~\ref{lemnew0002}.
\end{proof}

\begin{remark}\label{rmknew0001}
By checking the proof of Lemmas~\ref{lemnew0002} and \ref{lemnew0003}, we can see that $tu\not\in\mathcal{N}_V^\pm$ for all $t\geq0$ if $u\not\in\mathcal{D}$ in the cases $p\in(2, 4)\cap(2,2^*)$  and $tu\not\in\mathcal{N}_V$ for all $t\geq0$ if $u\not\in\mathcal{Q}$ in the case $N=3$ and $p=4$.
\end{remark}

Let
\begin{eqnarray}\label{eqnew9006}
\mathcal{S}_{p,a,V}=\inf_{u\in\h\backslash\{0\}}\frac{a\mathfrak{A}_u+\mathfrak{C}_{u,V}}{\mathfrak{B}_u^{\frac2p}}.
\end{eqnarray}
Then by the condition $(V)$, we can see that $\mathcal{S}_{p,a,V}>0$ is well defined.
\begin{lemma}\label{lemnew0004}
Let $a>0$ and $p\in(2, 4]\cap(2,2^*)$.  If the condition $(V)$ holds, then there exists $b_*(a)>0$ such that $\mathcal{D}$ and $\mathcal{Q}$ are both nonempty sets for $0<b<b_*(a)$, where $\mathcal{D}$ and $\mathcal{Q}$ are respectively given by \eqref{eqnew9001} and \eqref{eqnew9003}.
\end{lemma}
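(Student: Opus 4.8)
The plan is to prove nonemptiness by exhibiting a single fixed test function that lies in each set once $b$ is small, and by exploiting the monotonicity of the two defining quantities in the parameter $b$ so that membership persists for all smaller $b$. First I would fix any $u_0\in\h\backslash\{0\}$ (for instance a smooth compactly supported bump, or the ground state $U_\lambda$), so that $\mathfrak{A}_{u_0}$, $\mathfrak{B}_{u_0}$ and $\mathfrak{C}_{u_0,V}$ are finite and strictly positive; condition $(V)$ together with $\mathcal{S}_{p,a,V}>0$ guarantees $a\mathfrak{A}_{u_0}+\mathfrak{C}_{u_0,V}>0$. The key structural remark is that both defining inequalities are monotone in $b$: the map $b\mapsto b\mathfrak{A}_{u_0}^2-\mathfrak{B}_{u_0}$ is strictly increasing, and since $\frac{p-2}{4-p}>0$ for $p\in(2,4)\cap(2,2^*)$, the factor $b^{-\frac{p-2}{4-p}}$ appearing in the subtracted term of $\mathcal{G}(u_0)$ is strictly decreasing in $b$, so $b\mapsto\mathcal{G}(u_0)$ is strictly increasing. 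Consequently, if either quantity is negative at some $b_0$, it stays negative for every $0<b<b_0$; thus it suffices to produce one admissible value of $b$ and then read off a half-line $(0,b_0)$.

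For $\mathcal{Q}$ (the case $N=3$, $p=4$) this is immediate: $b\mathfrak{A}_{u_0}^2-\mathfrak{B}_{u_0}<0$ as soon as $b<\mathfrak{B}_{u_0}/\mathfrak{A}_{u_0}^2=:b_1(a)$, so $u_0\in\mathcal{Q}$ for all $0<b<b_1(a)$. For $\mathcal{D}$ (the case $p\in(2,4)\cap(2,2^*)$) I would let $b\to0^+$ in $\mathcal{G}(u_0)=a\mathfrak{A}_{u_0}+\mathfrak{C}_{u_0,V}-\frac{4-p}{2}\Big(\frac{(p-2)\mathfrak{B}_{u_0}}{2b\mathfrak{A}_{u_0}^2}\Big)^{\frac{p-2}{4-p}}\mathfrak{B}_{u_0}$: the first two terms are fixed while the subtracted term tends to $+\infty$, whence $\mathcal{G}(u_0)\to-\infty$. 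Solving $\mathcal{G}(u_0)=0$ for $b$ gives the explicit threshold $b_2(a)=\frac{(p-2)\mathfrak{B}_{u_0}}{2\mathfrak{A}_{u_0}^2}\Big(\frac{(4-p)\mathfrak{B}_{u_0}}{2(a\mathfrak{A}_{u_0}+\mathfrak{C}_{u_0,V})}\Big)^{\frac{4-p}{p-2}}>0$, and by the monotonicity above $u_0\in\mathcal{D}$ for all $0<b<b_2(a)$. Since the two cases correspond to disjoint ranges of $p$, I would take $b_*(a)=b_2(a)$ when $p<4$ (giving $\mathcal{D}\neq\emptyset$) and $b_*(a)=b_1(a)$ when $p=4$ (giving $\mathcal{Q}\neq\emptyset$), or $\min\{b_1(a),b_2(a)\}$ if a single constant is preferred.

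I do not expect a genuine obstacle here; the only points requiring a little care are verifying the strict positivity of $a\mathfrak{A}_{u_0}+\mathfrak{C}_{u_0,V}$ (so that $b_2(a)$ is a bona fide positive number, which is exactly where $(V)$ and $\mathcal{S}_{p,a,V}>0$ enter) and recording the monotonicity in $b$ so that the thresholds define the half-lines $(0,b_i(a))$ rather than isolated values. If one prefers a dilation-based argument avoiding the limit $b\to0^+$, one can instead insert $u_0(tx)$ and let $t\to0^+$: a direct power count shows that the subtracted term in $\mathcal{G}(u_0(t\cdot))$ dominates the remaining terms, again forcing $\mathcal{G}<0$ (this is transparent for $N=3$). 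However, the $b\to0^+$ limit argument above is the cleanest and works uniformly in $N$, since the $b$-dependence of the subtracted term is the same in every dimension.
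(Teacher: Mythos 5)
Your proof is correct, and it reaches the conclusion by a more elementary route than the paper. Where you fix a single test function $u_0$ and solve $\mathcal{G}(u_0)=0$ explicitly for $b$ (your thresholds $b_1(a)=\mathfrak{B}_{u_0}/\mathfrak{A}_{u_0}^2$ and $b_2(a)$ are correct, and the monotonicity of $b\mapsto b\mathfrak{A}_{u_0}^2-\mathfrak{B}_{u_0}$ and $b\mapsto\mathcal{G}(u_0)$ indeed upgrades a single admissible value to the half-line $(0,b_i(a))$), the paper instead takes a minimizing sequence $\{u_n\}$ for the quotient $\mathcal{S}_{p,a,V}$ of \eqref{eqnew9006}, uses the crude bound $\mathfrak{A}_{u_n}\leq\frac1a\big(a\mathfrak{A}_{u_n}+\mathfrak{C}_{u_n,V}\big)$ to majorize $\mathcal{G}(u_n)$, and factors the result so that the sign is governed by the bracket $\mathcal{S}_{p,a,V}^{\frac{p}{4-p}}-\frac{(4-p)a^{\frac{2(p-2)}{4-p}}}{2}\big(\frac{p-2}{2b}\big)^{\frac{p-2}{4-p}}+o_n(1)$ (and similarly $\frac{b}{a^2}\mathcal{S}_{4,a,V}^2-1$ when $p=4$). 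The underlying mechanism is identical in both arguments --- the factor $b^{-\frac{p-2}{4-p}}$ blows up as $b\to0^+$ and makes the subtracted term dominate --- but the executions buy different things: your version is simpler (no minimizing sequence, no quotient normalization) and yields a fully explicit $b_*(a)$, at the cost of the threshold depending on the arbitrary choice of $u_0$; the paper's version expresses $b_*(a)$ through the invariant $\mathcal{S}_{p,a,V}$, which is essentially the best threshold this family of estimates can produce and, more importantly, matches the way smallness of $b$ is measured against $\mathcal{S}_{p,a,V}$ again later (e.g.\ in the proof that $m^+<0$ in Lemma~\ref{lemnew0005}). Since the paper repeatedly shrinks $b_*(a)$ ``if necessary'' in subsequent lemmas anyway, your weaker, $u_0$-dependent threshold would not damage the overall architecture. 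Two minor remarks: positivity of $a\mathfrak{A}_{u_0}+\mathfrak{C}_{u_0,V}$ follows already from $V\geq v_0>0$ in condition $(V)$, so invoking $\mathcal{S}_{p,a,V}>0$ there is superfluous; and you are right to prefer the $b\to0^+$ argument over the dilation $u_0(tx)$, since a power count shows the dilation variant fails to produce domination when $N\geq4$.
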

\begin{proof}
Let ${u_n}$ be a minimizing sequence of $\mathcal{S}_{p,a,V}$.  Then for $p\in(2, 4)\cap(2,2^*)$, we have from the condition $(V)$ that
\begin{eqnarray*}
\mathcal{G}(u_n)&\leq&a\mathfrak{A}_{u_n}+\mathfrak{C}_{u_n,V}
-\frac{(4-p)a^{\frac{2(p-2)}{4-p}}}{2}\bigg(\frac{(p-2)}{2b}\bigg)^{\frac{p-2}{4-p}}\frac{\mathfrak{B}_{u_n}^{\frac{2}{4-p}}}
{(a\mathfrak{A}_{u_n}+\mathfrak{C}_{u_n,V})^{\frac{2(p-2)}{4-p}}}\\
&=&\frac{\mathfrak{B}_{u_n}^{\frac{2}{4-p}}}{(a\mathfrak{A}_{u_n}+\mathfrak{C}_{u_n,V})^{\frac{2(p-2)}{4-p}}}
\bigg(\mathcal{S}_{p,a,V}^{\frac{p}{4-p}}
-\frac{(4-p)a^{\frac{2(p-2)}{4-p}}}{2}\bigg(\frac{(p-2)}{2b}\bigg)^{\frac{p-2}{4-p}}+o_n(1)\bigg).\\
\end{eqnarray*}
For $N=3$ and $p=4$, we also have from the condition $(V)$ that
\begin{eqnarray*}
b\mathfrak{A}_{u_n}^2-\mathfrak{B}_{u_n}&\leq&\frac{b}{a^2}(a\mathfrak{A}_{u_n}+\mathfrak{C}_{u_n,V})^2-\mathfrak{B}_{u_n}\\
&=&\mathfrak{B}_{u_n}(\frac{b}{a^2}\mathcal{S}_{4,a,V}^2+o_n(1)-1).
\end{eqnarray*}
Thus, there exists $b_*(a)>0$ such that $\mathcal{D}$ and $\mathcal{Q}$ are both nonempty sets for $0<b<b_*(a)$.
\end{proof}

By Lemmas~\ref{lemnew0002}--\ref{lemnew0004}, we can see that $\mathcal{N}_V^-\not=\emptyset$ for $0<b<b_*(a)$ in all the cases $p\in(2, 4]\cap(2, 2^*)$ and $\mathcal{N}_V^+\not=\emptyset$ for $0<b<b_*(a)$ in the cases $p\in(2, 4)\cap(2, 2^*)$.  Thus, $m^\pm=\inf_{\mathcal{N}_V^\pm}\mathcal{J}_V(u)$ are both well defined respectively in these cases, where $\mathcal{J}_V(u)$ is given by \eqref{eq0104}.
\begin{lemma}\label{lemnew0005}
Let $a>0$, $p\in(2, 4]\cap(2, 2^*)$ and the condition $(V)$ hold.  Then $m^->0$ for $0<b<b_*(a)$.  Moreover, $m^+<0$ for $0<b<b_*(a)$ in the cases $p\in(2, 4)\cap(2, 2^*)$.
\end{lemma}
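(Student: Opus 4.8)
The plan is to reduce everything to the explicit value of $\mathcal{J}_V$ on $\mathcal{N}_V$. First I would record that $u\in\mathcal{N}_V$ means $\mathcal{J}_V'(u)u=0$, i.e. $\mathfrak{B}_u=b\mathfrak{A}_u^2+a\mathfrak{A}_u+\mathfrak{C}_{u,V}$; substituting this into \eqref{eq0104} gives
\[
\mathcal{J}_V(u)=\frac{p-2}{2p}(a\mathfrak{A}_u+\mathfrak{C}_{u,V})-\frac{4-p}{4p}b\mathfrak{A}_u^2\qquad\text{on }\mathcal{N}_V.
\]
The same computation used in Lemma~\ref{lemnew0002} yields $G_u''(1)=(4-p)b\mathfrak{A}_u^2-(p-2)(a\mathfrak{A}_u+\mathfrak{C}_{u,V})$ for $u\in\mathcal{N}_V$, so membership in $\mathcal{N}_V^\mp$ is exactly the inequality $(4-p)b\mathfrak{A}_u^2\lessgtr(p-2)(a\mathfrak{A}_u+\mathfrak{C}_{u,V})$ (and $\mathcal{N}_V=\mathcal{N}_V^-$ when $p=4$, since then $G_u''(1)=-2(a\mathfrak{A}_u+\mathfrak{C}_{u,V})<0$). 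These two facts turn both assertions into elementary inequalities on the gradient term.

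For $m^->0$: on $\mathcal{N}_V^-$ the defining inequality reads $\frac{4-p}{4p}b\mathfrak{A}_u^2<\frac{p-2}{4p}(a\mathfrak{A}_u+\mathfrak{C}_{u,V})$, which fed into the identity above gives $\mathcal{J}_V(u)>\frac{p-2}{4p}(a\mathfrak{A}_u+\mathfrak{C}_{u,V})$ (with the stronger exact value $\frac14(a\mathfrak{A}_u+\mathfrak{C}_{u,V})$ when $p=4$). It then remains to bound $a\mathfrak{A}_u+\mathfrak{C}_{u,V}$ away from $0$ uniformly. By $(V)$ one has $a\mathfrak{A}_u+\mathfrak{C}_{u,V}\geq c_0\|u\|_{\h}^2$ with $c_0=\min\{a,v_0\}$; and discarding the nonnegative term $b\mathfrak{A}_u^2$ in the Nehari identity gives $c_0\|u\|_{\h}^2\leq a\mathfrak{A}_u+\mathfrak{C}_{u,V}\leq\mathfrak{B}_u\leq C_S\|u\|_{\h}^p$ by the Sobolev embedding, whence $\|u\|_{\h}\geq(c_0/C_S)^{\frac1{p-2}}=:\delta>0$. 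Since $c_0,C_S,\delta$ do not depend on $b$, we conclude $m^-\geq\frac{p-2}{4p}c_0\delta^2>0$.

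For $m^+<0$: the energy identity shows that $\mathcal{J}_V(v)<0$ for $v\in\mathcal{N}_V^+$ is equivalent to $b\mathfrak{A}_v^2>\frac{2(p-2)}{4-p}(a\mathfrak{A}_v+\mathfrak{C}_{v,V})$, i.e. to $v$ lying \emph{well inside} the gradient-dominated branch (twice the threshold defining $\mathcal{N}_V^+$). I would fix one test function $u_0\in\h\setminus\{0\}$, say smooth, positive and compactly supported, so that $\mathfrak{A}_{u_0},\mathfrak{B}_{u_0},\mathfrak{C}_{u_0,V}>0$ are fixed. Because the subtracted term in $\mathcal{G}(u_0)$ carries the factor $(1/b)^{\frac{p-2}{4-p}}\to+\infty$ as $b\to0^+$, we get $u_0\in\mathcal{D}$ for $b$ small, so Lemma~\ref{lemnew0002} produces $t^+=t^+(b)$ with $t^+u_0\in\mathcal{N}_V^+$. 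Here $t^+$ is the larger root of $b\mathfrak{A}_{u_0}^2t^2-\mathfrak{B}_{u_0}t^{p-2}+(a\mathfrak{A}_{u_0}+\mathfrak{C}_{u_0,V})=0$, and balancing the first two terms gives $b\mathfrak{A}_{u_0}^2(t^+)^2\sim c(u_0)\,b^{\frac{2-p}{4-p}}\to+\infty$ as $b\to0^+$, since $\frac{2-p}{4-p}<0$ and $c(u_0)>0$ depends only on $u_0$. Dividing the negativity condition at $v=t^+u_0$ by $(t^+)^2$, it becomes exactly $b\mathfrak{A}_{u_0}^2(t^+)^2>\frac{2(p-2)}{4-p}(a\mathfrak{A}_{u_0}+\mathfrak{C}_{u_0,V})$, whose right-hand side is a fixed number; hence for $b$ small $m^+\leq\mathcal{J}_V(t^+u_0)<0$.

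The hard part is the $m^+$ statement, and precisely the issue of covering the whole range $0<b<b_*(a)$ rather than only small $b$: a direction $u$ \emph{barely} inside $\mathcal{D}$ (i.e. $\mathcal{G}(u)\approx0$) has its two roots nearly merged at $t^*$, where one only obtains $b\mathfrak{A}_u^2(t^+)^2\to\frac{p-2}{4-p}(a\mathfrak{A}_u+\mathfrak{C}_{u,V})$ — exactly half of what negativity requires — so the corresponding point of $\mathcal{N}_V^+$ carries \emph{positive} energy. One therefore cannot expect every element of $\mathcal{N}_V^+$ to have negative energy, and must genuinely exploit either the smallness of $b$ (shrinking $b_*(a)$ if necessary, which is harmless since $m^+<0$ is used only to locate the second critical level, not for the main existence statement drawn from $\mathcal{N}_V^-$) or, for each admissible $b$, produce a direction whose $\mathcal{N}_V^+$ representative has large Dirichlet energy via a dilation/concentration family. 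The first route is the cleaner one and is what I would carry out.
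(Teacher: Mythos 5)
Your proposal is correct, and its skeleton is the paper's: the energy identity $\mathcal{J}_V(u)=\frac{p-2}{2p}(a\mathfrak{A}_u+\mathfrak{C}_{u,V})-\frac{4-p}{4p}b\mathfrak{A}_u^2$ on $\mathcal{N}_V$, the reduction of membership in $\mathcal{N}_V^{\pm}$ to the sign of $(4-p)b\mathfrak{A}_u^2-(p-2)(a\mathfrak{A}_u+\mathfrak{C}_{u,V})$, and, for $m^+<0$, a smallness condition on $b$. The execution of both halves differs, though. For $m^->0$, the paper does not invoke coercivity of the full $\h$-norm: it interpolates $\mathfrak{B}_u$ between $\mathfrak{C}_u$ and $\mathfrak{A}_u$ (H\"older plus Sobolev, \eqref{eqnew9100}), absorbs $\mathfrak{C}_{u,V}$ by Young's inequality to get $a\mathfrak{A}_u\leq C_3\mathfrak{A}_u^{2^*/2}$, hence $\mathfrak{A}_u\geq C_4$, and concludes via $\mathcal{J}_V(u)>\frac{4-p}{4p}b\mathfrak{A}_u^2$ (respectively $\mathcal{J}_V(u)\geq\frac{(p-2)a}{2p}\mathfrak{A}_u$ for $p=4$); your route through $a\mathfrak{A}_u+\mathfrak{C}_{u,V}\geq\min\{a,v_0\}\|u\|_{\h}^2$ and the Sobolev embedding is more elementary and even yields a lower bound for $m^-$ that is uniform in $b$, which the paper's $b$-dependent bound $\frac{4-p}{4p}bC_4^2$ is not. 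For $m^+<0$, instead of your fixed test function and asymptotics of $t^+$, the paper shrinks $b_*(a)$ so that $\mathcal{S}_{p,a,V}<a^{\frac{2(p-2)}{p}}\big(\frac{(4-p)(p+2)}{4p}\big)^{\frac{4-p}{p}}\big(\frac{p-2}{2b}\big)^{\frac{p-2}{p}}$, picks a near-minimizer $u$ of \eqref{eqnew9006}, and evaluates the energy at $t^*u$ with $t^*$ the minimum point \eqref{eqnew9007} of $g_u$, using the monotonicity of Lemma~\ref{lemnew0002} to get $\mathcal{J}_V(t^+u)\leq\mathcal{J}_V(t^*u)<0$ — thereby sidestepping any analysis of where $t^+$ sits. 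Your version is equally valid once the ``balancing'' heuristic is made rigorous, which takes one line: since $t^+>t^*=\big(\frac{(p-2)\mathfrak{B}_{u_0}}{2b\mathfrak{A}_{u_0}^2}\big)^{\frac{1}{4-p}}$, one has $b\mathfrak{A}_{u_0}^2(t^+)^2\geq b\mathfrak{A}_{u_0}^2(t^*)^2=c(u_0)\,b^{\frac{2-p}{4-p}}\to+\infty$ as $b\to0^+$. Finally, your closing concern about covering the whole range $0<b<b_*(a)$ is moot under the paper's convention: the paper itself writes ``by choosing $b_*(a)$ small enough if necessary'' at precisely this point (and again in Lemmas~\ref{lemnew0007} and \ref{lemnew0009}), so your first route is exactly the paper's; and your observation that a direction with $\mathcal{G}(u)\approx 0$ produces a point of $\mathcal{N}_V^+$ with $b\mathfrak{A}_u^2(t^+)^2$ near $\frac{p-2}{4-p}(a\mathfrak{A}_u+\mathfrak{C}_{u,V})$ — half the negativity threshold $\frac{2(p-2)}{4-p}(a\mathfrak{A}_u+\mathfrak{C}_{u,V})$ — is a correct and sharp explanation of why some smallness of $b$ is genuinely needed for $m^+<0$.
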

\begin{proof}
Let $u\in\mathcal{N}_V^-$.  Then we have
\begin{eqnarray}
b\mathfrak{A}_u^2-\mathfrak{B}_u+a\mathfrak{A}_u+\mathfrak{C}_{u,V}=0\label{eqnew9004}\\
3b\mathfrak{A}_u^2-(p-1)\mathfrak{B}_u+a\mathfrak{A}_u+\mathfrak{C}_{u,V}<0.\label{eqnew9005}
\end{eqnarray}
By \eqref{eqnew9100}, \eqref{eqnew9004} and the condition $(V)$, we can see that
\begin{eqnarray*}
a\mathfrak{A}_u+\mathfrak{C}_{u,V}&\leq&\mathfrak{B}_u\\
&\leq&C_1\mathfrak{C}_u^{\frac{2^*-p}{2^*-2}}\mathfrak{A}_u^{\frac{2^*(p-2)}{2(2^*-2)}}\\
&\leq&C_2\mathfrak{C}_{u,V}^{\frac{2^*-p}{2^*-2}}\mathfrak{A}_u^{\frac{2^*(p-2)}{2(2^*-2)}}
\end{eqnarray*}
It follows from the Young inequality that
\begin{eqnarray}\label{eqnew9017}
a\mathfrak{A}_u\leq C_2\mathfrak{C}_{u,V}^{\frac{2^*-p}{2^*-2}}\mathfrak{A}_u^{\frac{2^*(p-2)}{2(2^*-2)}}-\mathfrak{C}_{u,V}
\leq C_3\mathfrak{A}_u^{\frac{2^*}{2}},
\end{eqnarray}
which implies $\mathfrak{A}_u\geq C_4$.
On the other hand, by \eqref{eqnew9004} and \eqref{eqnew9005}, we must have that $(4-p)\mathfrak{A}_u^2<(p-2)(a\mathfrak{A}_u+\mathfrak{C}_{u,V})$.  Thus, for $p\in(2, 4)\cap(2, 2^*)$, we can see that
\begin{eqnarray}
\mathcal{J}_V(u)&=&\frac{p-2}{2p}(a\mathfrak{A}_u+\mathfrak{C}_{u,V})-\frac{4-p}{4p}b\mathfrak{A}_u^2\notag\\
&>&\frac{4-p}{4p}b\mathfrak{A}_u^2\label{eqnew9025}\\
&\geq&C_5.\notag
\end{eqnarray}
For the case $p=4$, by a similar argument as used for \eqref{eqnew9025}, we can see that $\mathcal{J}_V(u)\geq\frac{(p-2)a}{2p}\mathfrak{A}_u\geq C_6$.
Since $u\in\mathcal{N}_V^-$ is arbitrary, by Lemmas~\ref{lemnew0002}--\ref{lemnew0004}, we must have that $m^->0$ for $0<b<b_*(a)$ in all the cases $p\in(2, 4]\cap(2, 2^*)$.  Next, we prove that $m^+<0$ for $0<b<b_*(a)$ in the cases $p\in(2, 4)\cap(2, 2^*)$.  Indeed, by choosing $b_*(a)$ small enough if necessary, we can see that
\begin{eqnarray*}
\mathcal{S}_{p,a,V}<a^{\frac{2(p-2)}{p}}\bigg(\frac{(4-p)(p+2)}{4p}\bigg)^{\frac{4-p}{p}}\bigg(\frac{(p-2)}{2b}\bigg)^{\frac{p-2}{p}}
\end{eqnarray*}
for $0<b<b_*(a), $where $\mathcal{S}_{p,a,V}$ is given by \eqref{eqnew9006}.  It follows that there exists $u\in\h\backslash\{0\}$ such that
\begin{eqnarray}\label{eqnew9011}
a\mathfrak{A}_u+\mathfrak{C}_{u,V}<a^{\frac{2(p-2)}{p}}\bigg(\frac{(4-p)(p+2)}{4p}\bigg)^{\frac{4-p}{p}}\bigg(\frac{(p-2)}{2b}\bigg)^{\frac{p-2}{p}}\mathfrak{B}_u^{\frac2p}.
\end{eqnarray}
Since $2<p<4$, $\frac{p+2}{2p}<1$.  Thus, we must have that $u\in\mathcal{D}$.  By Lemma~\ref{lemnew0002}, there exists $t^+>0$ such that $t^+u\in\mathcal{N}_V^+$.  Thanks to Lemma~\ref{lemnew0002} once more, we also have that $\mathcal{J}_V(t^+u)\leq\mathcal{J}_V(t^*u)$, where $t^*$ is given by \eqref{eqnew9007}.  It follows from \eqref{eqnew9011} that
\begin{eqnarray*}
\mathcal{J}_V(t^+u)&\leq&\mathcal{J}_V(t^*u)\\
&=&\frac{(t^*)^2}{2}\bigg(a\mathfrak{A}_u+\mathfrak{C}_{u,V}-\frac{(4-p)(p+2)(t^*)^{p-2}}{4p}\mathfrak{B}_u\bigg)\\
&\leq&\frac{(t^*)^2}{2}\bigg(a\mathfrak{A}_u+\mathfrak{C}_{u,V}\\
&&-\frac{(4-p)(p+2)a^{\frac{2(p-2)}{4-p}}}{4p}\bigg(\frac{(p-2)\mathfrak{B}_u}{2b(a\mathfrak{A}_u+\mathfrak{C}_{u,V})^2}\bigg)^{\frac{p-2}{4-p}}\mathfrak{B}_u\bigg)\\
&=&\frac{(t^*)^2}{2(a\mathfrak{A}_u+\mathfrak{C}_{u,V})^{\frac{2(p-2)}{4-p}}}\bigg((a\mathfrak{A}_u+\mathfrak{C}_{u,V})^{\frac{p}{4-p}}\\
&&-\frac{(4-p)(p+2)a^{\frac{2(p-2)}{4-p}}}{4p}\bigg(\frac{(p-2)}{2b}\bigg)^{\frac{p-2}{4-p}}\mathfrak{B}_u^{\frac{2}{4-p}}\bigg)\\
&<&0,
\end{eqnarray*}
which implies $m^+<0$ for $0<b<b_*(a)$ in the cases $p\in(2, 4)\cap(2, 2^*)$.
\end{proof}

We close this section by the following observation on $\mathcal{N}_V$ for $0<b<b_*(a)$ in all the cases $p\in(2, 4]\cap(2, 2^*)$.
\begin{lemma}\label{lemnew0006}
Let $a>0$ and the condition $(V)$ holds.  If $u_0\in\mathcal{N}_V^-$ minimizes $\mathcal{J}_V(u)$ on $\mathcal{N}_V^-$ for $0<b<b_*(a)$ in the cases $p\in(2, 4]\cap(2, 2^*)$, then $u_0$ is also a critical point of $\mathcal{J}_V(u)$ in $\h$.
\end{lemma}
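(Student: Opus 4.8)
The plan is to run the standard constrained--critical--point argument for the Nehari manifold via the method of Lagrange multipliers. Write
\[
\Phi(u)=\mathcal{J}_V'(u)u=b\mathfrak{A}_u^2+a\mathfrak{A}_u+\mathfrak{C}_{u,V}-\mathfrak{B}_u,
\]
so that $\mathcal{N}_V=\{u\in\h\backslash\{0\}\mid\Phi(u)=0\}$ and $\Phi$ is of class $C^1$ on $\h$ (since $\mathcal{J}_V$ is $C^2$). The goal is to produce the Lagrange multiplier associated with the minimization and then show it must vanish, which forces $\mathcal{J}_V'(u_0)=0$ in $H^{-1}(\bbr^N)$. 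This is the non-autonomous, Nehari analogue of Lemma~\ref{lem0004}.

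First I would record the nondegeneracy identity that drives everything. Since $\Phi(tu)=t\,G_u'(t)$ for every $u\in\h\backslash\{0\}$ and $t>0$, differentiating at $t=1$ gives $\Phi'(u)u=G_u'(1)+G_u''(1)$; on $\mathcal{N}_V$ one has $G_u'(1)=\Phi(u)=0$, hence $\Phi'(u_0)u_0=G_{u_0}''(1)$. Because $u_0\in\mathcal{N}_V^-$, the definition \eqref{eq1005} yields $G_{u_0}''(1)<0$, so in particular $\Phi'(u_0)\neq0$ in $H^{-1}(\bbr^N)$. This has two consequences I would spell out: $\mathcal{N}_V$ is a genuine $C^1$--manifold in a neighborhood of $u_0$, and, since $u\mapsto G_u''(1)$ is continuous, $\mathcal{N}_V^-$ is relatively open in $\mathcal{N}_V$. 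Therefore $u_0$ is in fact a local minimizer of $\mathcal{J}_V$ subject to the single constraint $\Phi=0$, which is exactly what licenses the single--multiplier argument.

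With these facts in hand, the method of Lagrange multipliers provides $\sigma\in\bbr$ with $\mathcal{J}_V'(u_0)=\sigma\Phi'(u_0)$ in $H^{-1}(\bbr^N)$. Testing this identity against $u_0$ and using $u_0\in\mathcal{N}_V$ together with the identity from the previous paragraph gives
\[
0=\Phi(u_0)=\mathcal{J}_V'(u_0)u_0=\sigma\,\Phi'(u_0)u_0=\sigma\,G_{u_0}''(1).
\]
Since $G_{u_0}''(1)<0$, this forces $\sigma=0$, whence $\mathcal{J}_V'(u_0)=0$ in $H^{-1}(\bbr^N)$; that is, $u_0$ is a free critical point of $\mathcal{J}_V$ in $\h$, as claimed.

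The only genuinely delicate point, and the step I would be most careful about, is the passage from minimizing over the \emph{subset} $\mathcal{N}_V^-$ to a constrained critical point on all of $\mathcal{N}_V$. This is precisely why the strict sign $G_{u_0}''(1)<0$ (rather than $\leq0$) is essential: it simultaneously guarantees the constraint qualification $\Phi'(u_0)\neq0$ and the relative openness of $\mathcal{N}_V^-$ in $\mathcal{N}_V$. Once these are secured, the remainder is the routine Lagrange--multiplier computation above, which is uniform in the two regimes $p\in(2,4)\cap(2,2^*)$ and $p=4$ (with $N=3$) and requires nothing beyond $0<b<b_*(a)$ to ensure $\mathcal{N}_V^-\neq\emptyset$.
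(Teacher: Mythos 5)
Your proposal is correct and takes essentially the same route as the paper's proof: invoke Lagrange multipliers to get $\mathcal{J}_V'(u_0)=\sigma\Phi'(u_0)$ in $H^{-1}(\bbr^N)$, test against $u_0$, and use $\Phi'(u_0)u_0=G_{u_0}''(1)<0$ to force $\sigma=0$. Your explicit verification of the constraint qualification $\Phi'(u_0)\neq0$ and of the relative openness of $\mathcal{N}_V^-$ in $\mathcal{N}_V$ (so that $u_0$ is a \emph{local} minimizer on the full constraint set) is a welcome tightening of a step the paper states more loosely.
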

\begin{proof}
Let $u_0\in\mathcal{N}_V^-$ be a minimum point of $\mathcal{J}_V(u)$ on $\mathcal{N}_V^-$ for $0<b<b_*(a)$ in the cases $p\in(2, 4]\cap(2, 2^*)$.  by the definition of $\mathcal{N}_V^-$, $u_0$ is also a minimum point of $\mathcal{J}_V(u)$ on $\mathcal{N}_V$ for $0<b<b_*(a)$.  Thanks to the method of Lagrange multipliers, there exists $\tau\in\bbr$ such that $\mathcal{J}'_V(u_0)-\tau\Phi'_V(u_0)=0$ in $H^{-1}(\bbr^N)$, where $\Phi_V(u)=\mathcal{J}'_V(u)u$.  It follows from $u_0\in\mathcal{N}_V$ and a direct calculation that $0=\mathcal{J}'_V(u_0)u_0=\tau\Phi'_V(u_0)u_0=\tau\mathcal{G}_{u_0}''(1)$.  Note that $u_0\in\mathcal{N}_V^-$, thus, $\mathcal{G}_{u_0}''(1)<0$, which implies $\tau=0$.  It follows that $\mathcal{J}'_V(u_0)=0$ in $H^{-1}(\bbr^N)$.
\end{proof}
\subsection{A local compactness result}
Let
\begin{eqnarray*}\label{eqnew9018}
\mathcal{J}_\infty(u)=\frac{b}{4}\mathfrak{A}_u^2+\frac{a}{2}\mathfrak{A}_u+\frac{v_\infty}{2}\mathfrak{C}_u-\frac1p\mathfrak{B}_u
\end{eqnarray*}
be the corresponding functional of the following equation
\begin{eqnarray}\label{eqnew9019}
\left\{\aligned&-\bigg(a+b\int_{\bbr^N}|\nabla u|^2dx\bigg)\Delta u+v_\infty u=|u|^{p-2}u&\text{ in }\bbr^N,\\%
&u\in\h,\endaligned\right.
\end{eqnarray}
Then $\mathcal{J}_\infty(u)$ and \eqref{eqnew9019} can respectively be seen as the ``limit'' functional and equation of $\mathcal{J}_V(u)$ and $(\mathcal{P}_{a,b})$.
Let $\mathcal{D}_\infty=\{u\in\h\backslash\{0\}\mid\mathcal{G}_\infty(u)<0\}$, where
\begin{eqnarray*}
\mathcal{G}_\infty(u)=a\mathfrak{A}_u+v_\infty\mathfrak{C}_{u}-\frac{4-p}{2}\bigg(\frac{(p-2)\mathfrak{B}_u}{2b\mathfrak{A}_u^2}\bigg)^{\frac{p-2}{4-p}}\mathfrak{B}_u.
\end{eqnarray*}
Let
\begin{eqnarray}\label{eqnew9020}
\mathcal{N}_\infty=\{u\in\h\backslash\{0\}\mid \mathcal{J}_\infty'(u)u=0\}.
\end{eqnarray}
Then it is easy to see that all nontrivial critical points of $\mathcal{J}_\infty(u)$ are contained in $\mathcal{N}_\infty$.  Let
\begin{eqnarray*}
G_{u,\infty}(t)&=&\mathcal{J}_\infty(tu)\\
&=&\frac{bt^4}{4}\mathfrak{A}_u^2+\frac{at^2}{2}\mathfrak{A}_u+\frac{t^2v_\infty}2\mathfrak{C}_u^2-\frac{t^p}{p}\mathfrak{B}_u.
\end{eqnarray*}
Then by a direct calculation, we can see that $G_{u,\infty}(t)$ is of $C^2$ in $\bbr^+$ for every $u\in\h$ and $G_{u,\infty}'(t)=0$ if and only if $tu\in\mathcal{N}_\infty$.  Thus, it is natural to divide the Nehari type manifold $\mathcal{N}_{\infty}$ into the following three parts:
\begin{eqnarray}
\mathcal{N}_\infty^-&=&\{u\in\mathcal{N}\mid G_{u,\infty}''(1)<0\};\label{eqnew9021}\\
\mathcal{N}_\infty^0&=&\{u\in\mathcal{N}\mid G_{u,\infty}''(1)=0\};\label{eqnew9022}\\
\mathcal{N}_\infty^+&=&\{u\in\mathcal{N}\mid G_{u,\infty}''(1)>0\}.\label{eqnew9023}.
\end{eqnarray}
Then choosing $b_*(a)$ small enough if necessary and by similar arguments as used in Lemmas~\ref{lemnew0002}--\ref{lemnew0006}, we can obtain the following.
\begin{lemma}\label{lemnew0007}
Let $a>0$, $0<b<b_*(a)$ and $p\in(2, 4]\cap(2, 2^*)$.  Then we have the following.
\begin{enumerate}
\item[$(1)$] $\mathcal{D}_\infty$ and $\mathcal{Q}$ are both nonempty sets.
\item[$(2)$] If $p\in(2, 4)\cap(2,2^*)$.  Then there exist unique $0<t_\infty^-<t_\infty^+<+\infty$ such that $t_\infty^\pm u\in\mathcal{N}_\infty^\pm$ for $u\in\mathcal{D}_\infty$, where $\mathcal{N}_\infty^\pm$ are given by \eqref{eqnew9021} and \eqref{eqnew9023}.  Moreover, $G_{u,\infty}(s)$ is strictly increasing on $(0, t_\infty^-)$, strictly decreasing on $(t_\infty^-, t_\infty^+)$ and strictly increasing on $(t_\infty^+, +\infty)$.
\item[$(3)$] If $N=3$ and $p=4$.  Then there exists unique $0<t_\infty^0<+\infty$ such that $t_\infty^0 u\in\mathcal{N}_\infty$ for $u\in\mathcal{Q}$, where $\mathcal{N}_\infty$ is given by \eqref{eqnew9020}.  Moreover, $G_{u,\infty}(s)$ is strictly increasing on $(0, t_\infty^-)$ and strictly increasing on $(t_\infty^-, +\infty)$.
\item[$(4)$] $m_\infty^->0$ in the cases $p\in(2, 4]\cap(2, 2^*)$ and $m_\infty^+<0$ in the cases $p\in(2, 4)\cap(2, 2^*)$ for $0<b<b_*(a)$, where $m_\infty^\pm=\inf_{\mathcal{N}_\infty^\pm}\mathcal{J}_\infty(u)$.
\item[$(5)$] If $u_0\in\mathcal{N}_\infty^-$ minimizes $\mathcal{J}_\infty(u)$ on $\mathcal{N}_\infty^-$ in the cases $p\in(2, 4]\cap(2, 2^*)$ for $0<b<b_*(a)$, then $u_0$ is also a critical point of $\mathcal{J}_\infty(u)$ in $\h$.
\end{enumerate}
\end{lemma}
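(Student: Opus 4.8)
The plan is to observe that $\mathcal{J}_\infty(u)$ is nothing but the functional $\mathcal{J}_V(u)$ with the variable potential term $\mathfrak{C}_{u,V}=\int_{\bbr^N}V(x)u^2dx$ replaced by the constant-coefficient term $v_\infty\mathfrak{C}_u=v_\infty\|u\|_{L^2(\bbr^N)}^2$. Since the condition $(V)$ guarantees $0<v_0\leq V(x)\leq v_\infty$, both $a\mathfrak{A}_u+\mathfrak{C}_{u,V}$ and $a\mathfrak{A}_u+v_\infty\mathfrak{C}_u$ are equivalent to the square of the $\h$-norm; consequently every structural estimate established in Lemmas~\ref{lemnew0002}--\ref{lemnew0006} depends on $V$ only through this definiteness, and each such estimate remains valid after substituting $v_\infty$ for $V(x)$. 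The proof therefore proceeds by transcribing the earlier arguments, term by term, with $\mathfrak{C}_{u,V}$ replaced by $v_\infty\mathfrak{C}_u$.

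Concretely, for the fibering-map parts $(2)$ and $(3)$ I would set $g_{u,\infty}(t)=b\mathfrak{A}_u^2t^2-\mathfrak{B}_ut^{p-2}+a\mathfrak{A}_u+v_\infty\mathfrak{C}_u$, which differs from $g_u(t)$ in the proof of Lemma~\ref{lemnew0002} only in its constant term; its minimum is attained at the same point $t^*$ given by \eqref{eqnew9007}, and $g_{u,\infty}(t^*)=\mathcal{G}_\infty(u)$. The existence and ordering of the roots $0<t_\infty^-<t_\infty^+$ for $p\in(2,4)\cap(2,2^*)$ and of $t_\infty^0$ for $N=3,\,p=4$, together with the stated monotonicity of $G_{u,\infty}(s)$, then follow exactly as in Lemmas~\ref{lemnew0002} and \ref{lemnew0003}. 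Part $(1)$ is obtained as in Lemma~\ref{lemnew0004}: I would introduce the quantity $\mathcal{S}_{p,a,\infty}=\inf_{u\in\h\backslash\{0\}}(a\mathfrak{A}_u+v_\infty\mathfrak{C}_u)\mathfrak{B}_u^{-\frac2p}$, evaluate $\mathcal{G}_\infty(u)$ and $b\mathfrak{A}_u^2-\mathfrak{B}_u$ along a minimizing sequence for $\mathcal{S}_{p,a,\infty}$, and read off a threshold on $b$ below which $\mathcal{D}_\infty$ and $\mathcal{Q}$ are nonempty.

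For part $(4)$ the sign of $m_\infty^-$ follows from the same chain as in Lemma~\ref{lemnew0005}: the inequality \eqref{eqnew9100} together with $(V)$, now using $v_\infty$, yields a uniform lower bound $\mathfrak{A}_u\geq C_4$ on $\mathcal{N}_\infty^-$, whence $\mathcal{J}_\infty(u)\geq C_5>0$; and $m_\infty^+<0$ is produced by exhibiting, for $b$ small, a function in $\mathcal{D}_\infty$ on which $\mathcal{J}_\infty(t^+u)<0$, exactly as in the construction culminating in \eqref{eqnew9011}. Part $(5)$ is the Lagrange-multiplier argument of Lemma~\ref{lemnew0006}: a minimizer $u_0$ on $\mathcal{N}_\infty^-$ minimizes $\mathcal{J}_\infty$ on $\mathcal{N}_\infty$, so $\mathcal{J}_\infty'(u_0)-\tau\Phi_\infty'(u_0)=0$ for some $\tau\in\bbr$ with $\Phi_\infty(u)=\mathcal{J}_\infty'(u)u$, and testing against $u_0$ gives $\tau G_{u_0,\infty}''(1)=0$ with $G_{u_0,\infty}''(1)<0$, forcing $\tau=0$. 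The only genuine bookkeeping point, and the place I would be most careful, is that each of these five steps may impose its own smallness threshold on $b$; since all are of the same form as in the non-autonomous case, I would simply take $b_*(a)$ to be the minimum of the finitely many thresholds arising here and in Lemmas~\ref{lemnew0002}--\ref{lemnew0006}, so that all conclusions hold simultaneously on the common interval $0<b<b_*(a)$.
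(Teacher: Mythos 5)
Your proposal is correct and follows essentially the same route as the paper, which proves Lemma~\ref{lemnew0007} precisely by observing that, after replacing $\mathfrak{C}_{u,V}$ with $v_\infty\mathfrak{C}_u$, the arguments of Lemmas~\ref{lemnew0002}--\ref{lemnew0006} carry over unchanged, ``choosing $b_*(a)$ small enough if necessary.'' Your explicit remark about taking $b_*(a)$ to be the minimum of the finitely many smallness thresholds is exactly the content of that phrase, so nothing is missing.
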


Now, by Lemma~\ref{lemnew0007}, we can obtain the following.
\begin{proposition}\label{propnew0001}
Let $a>0$, $0<b<b_*(a)$ and $p\in(2, 4]\cap(2, 2^*)$.  Then there exists $u_0\in\mathcal{N}_\infty^-$ such that $u_0>0$, $\mathcal{J}_\infty(u_0)=m_\infty^-$ and $\mathcal{J}'_\infty(u_0)=0$ in $H^{-1}(\bbr^N)$.
\end{proposition}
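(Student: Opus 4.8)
The plan is to produce $u_0$ by directly minimizing $\mathcal{J}_\infty$ over $\mathcal{N}_\infty^-$, taking advantage of the fact that the limit equation \eqref{eqnew9019} is autonomous and therefore admits a radial variational setting. First I would pick a minimizing sequence $\{u_n\}\subset\mathcal{N}_\infty^-$ with $\mathcal{J}_\infty(u_n)\to m_\infty^-$ and replace it by a radial one. Schwartz symmetrization yields $\{u_n^*\}\subset H^1_r(\bbr^N)$ with $\mathfrak{A}_{u_n^*}\leq\mathfrak{A}_{u_n}$, $\mathfrak{B}_{u_n^*}=\mathfrak{B}_{u_n}$ and $\mathfrak{C}_{u_n^*}=\mathfrak{C}_{u_n}$, exactly as in \eqref{eq0026}. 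Lowering $\mathfrak{A}$ while keeping $\mathfrak{B},\mathfrak{C}$ fixed only decreases $\mathcal{G}_\infty$ (respectively $b\mathfrak{A}_u^2-\mathfrak{B}_u$ when $p=4$), so each $u_n^*$ remains in $\mathcal{D}_\infty$ (respectively in $\mathcal{Q}$) and, by Lemma~\ref{lemnew0007}, can be projected to $v_n:=t^-(u_n^*)\,u_n^*\in\mathcal{N}_\infty^-$. Since the symmetrization lowers the positive $s^4$ and $s^2$ coefficients of the fibering map and keeps the $s^p$ one, one has $G_{u_n^*,\infty}(s)\leq G_{u_n,\infty}(s)$ for every $s>0$; moreover $g_{u_n^*,\infty}(1)\leq g_{u_n,\infty}(1)=0$ forces $t^-(u_n^*)\leq1=t^-(u_n)$. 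Using the monotonicity of $G_{u_n,\infty}$ on $(0,1)$ from Lemma~\ref{lemnew0007}, the chaining
$$\mathcal{J}_\infty(v_n)=G_{u_n^*,\infty}(t^-(u_n^*))\leq G_{u_n,\infty}(t^-(u_n^*))\leq G_{u_n,\infty}(1)=m_\infty^-+o_n(1)$$
shows that $\{v_n\}$ is a radial minimizing sequence; this is the same device as in Lemma~\ref{lem0005}.

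Next I would establish boundedness and extract a weak limit. From $u\in\mathcal{N}_\infty^-$ one has $\tfrac{4-p}{4p}b\mathfrak{A}_u^2<\mathcal{J}_\infty(u)$ as in \eqref{eqnew9025}, so $\mathfrak{A}_{v_n}$ is bounded; inserting this into \eqref{eqnew9100} and the Nehari identity bounds $\mathfrak{C}_{v_n}$, hence $\{v_n\}$ is bounded in $H^1_r(\bbr^N)$. Passing to a subsequence, $v_n\rightharpoonup v_0$ in $H^1_r(\bbr^N)$, and by the compact embedding $H^1_r(\bbr^N)\hookrightarrow\hookrightarrow L^p(\bbr^N)$, valid since $2<p<2^*$, we get $\mathfrak{B}_{v_n}\to\mathfrak{B}_{v_0}$. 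The lower bound $\mathfrak{A}_{v_n}\geq C_4>0$ coming from \eqref{eqnew9017}, together with the Nehari identity, gives $\mathfrak{B}_{v_n}\geq aC_4>0$, whence $\mathfrak{B}_{v_0}>0$ and $v_0\neq0$.

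The delicate point, which I expect to be the main obstacle, is that the radial embedding is not compact into $L^2(\bbr^N)$, so $\mathfrak{C}_{v_n}$ and $\mathfrak{A}_{v_n}$ are only weakly lower semicontinuous and strong convergence is not automatic. I would circumvent this by a squeeze rather than by proving strong convergence. Passing to a further subsequence so that $\mathfrak{A}_{v_n}\to A$ and $\mathfrak{C}_{v_n}\to C$ with $\mathfrak{A}_{v_0}\leq A$, $\mathfrak{C}_{v_0}\leq C$, the limit of the Nehari identity reads $bA^2+aA+v_\infty C=\mathfrak{B}_{v_0}$, so $g_{v_0,\infty}(1)=b\mathfrak{A}_{v_0}^2+a\mathfrak{A}_{v_0}+v_\infty\mathfrak{C}_{v_0}-\mathfrak{B}_{v_0}\leq0$. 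This places $v_0$ in $\mathcal{D}_\infty$ (respectively $\mathcal{Q}$) and yields $t^-(v_0)\leq1$. Setting $w_0:=t^-(v_0)v_0\in\mathcal{N}_\infty^-$ and using the fixed-$t$ weak lower semicontinuity $G_{v_0,\infty}(t)\leq\liminf_nG_{v_n,\infty}(t)$ together with $t^-(v_0)\leq1$ and the monotonicity of each $G_{v_n,\infty}$ on $(0,1)$, one obtains
$$m_\infty^-\leq\mathcal{J}_\infty(w_0)=G_{v_0,\infty}(t^-(v_0))\leq\liminf_nG_{v_n,\infty}(t^-(v_0))\leq\liminf_nG_{v_n,\infty}(1)=m_\infty^-,$$
so every inequality is an equality and $w_0$ attains $m_\infty^-$.

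Finally, Lemma~\ref{lemnew0007}$(5)$ promotes this constrained minimizer to a genuine critical point, $\mathcal{J}'_\infty(w_0)=0$ in $H^{-1}(\bbr^N)$. Because $w_0$ is radial and nonnegative (it is assembled from symmetrized functions, and $|w_0|$ is equally a minimizer), the strong maximum principle applied to \eqref{eqnew9019} gives $w_0>0$, so $u_0:=w_0$ is the desired function. The only genuinely nontrivial ingredient is the squeeze above, which bypasses the failure of $L^2$-compactness of the radial embedding; everything else is routine bookkeeping with the fibering maps already analyzed in Lemma~\ref{lemnew0007}.
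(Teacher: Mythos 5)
Your proposal is correct and is essentially the paper's own proof: Schwartz symmetrization of a minimizing sequence, projection back onto $\mathcal{N}_\infty^-$ with $t^-\leq1$ via the fibering maps of Lemma~\ref{lemnew0007}, boundedness, a weak radial limit shown nontrivial through the compact embedding into $L^p(\bbr^N)$, the same weak-lower-semicontinuity squeeze (the paper's chain $m_\infty^-+o_n(1)=\mathcal{J}_\infty(u_n)\geq\mathcal{J}_\infty(t_\infty^-u_n)\geq\mathcal{J}_\infty(t_\infty^-u_0)+o_n(1)\geq m_\infty^-+o_n(1)$ is exactly your $\liminf$ argument), and finally Lemma~\ref{lemnew0007}$(5)$ plus the maximum principle; you are in fact more careful than the paper in verifying $v_0\neq0$ and in not claiming $L^2$-compactness of the radial embedding. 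The one step to patch is boundedness when $p=4$, where your inequality $\frac{4-p}{4p}b\mathfrak{A}_u^2<\mathcal{J}_\infty(u)$ is vacuous: as in the paper, compute $\mathcal{J}_\infty(u_n)-\frac14\mathcal{J}'_\infty(u_n)u_n$, which on $\mathcal{N}_\infty$ equals $\frac14\left(a\mathfrak{A}_{u_n}+v_\infty\mathfrak{C}_{u_n}\right)$ and yields coercivity directly.
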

\begin{proof}
By $(4)$ of Lemma~\ref{lemnew0007}, $m_\infty^-$ is well defined.  Let $\{u_n\}\subset\mathcal{N}_\infty^-$ be a minimizing sequence of $\mathcal{J}_\infty(u)$ for $m^-$.  Then by the Schwartz symmetrization, there exists $\{u_n^{*}\}\subset H^1_r(\bbr^N)$ such that
\begin{eqnarray}\label{eqnew0026}
\mathfrak{A}_{u_n^{*}}\leq\mathfrak{A}_{u_n},\quad \mathfrak{B}_{u_n^{*}}=\mathfrak{B}_{u_n}\quad\text{and}\quad\mathfrak{C}_{u_n^{*}}=\mathfrak{C}_{u_n}.
\end{eqnarray}
It follows from $\{u_n\}\subset\mathcal{N}_\infty^-$ that $G_{u_n^*,\infty}'(1)\leq0$, which together with $(1)$ and $(2)$ of Lemma~\ref{lemnew0007}, implies there exists $t_{n,\infty}^-\leq1$ such that $\{t_{n,\infty}^-u_n^{*}\}\subset\mathcal{N}_\infty^-$.  Thus, by $(1)$ and $(2)$ of Lemma~\ref{lemnew0007} and \eqref{eqnew0026} once more, we can see that
\begin{eqnarray*}
m_\infty^-+o_n(1)=\mathcal{J}_\infty(u_n)\geq\mathcal{J}_\infty(t_{n,\infty}^-u_n)\geq\mathcal{J}_\infty(t_{n,\infty}^-u_n^*)\geq m_\infty^-.
\end{eqnarray*}
Therefore, $\{t_{n,\infty}^-u_n^{*}\}\subset H^1_r(\bbr^N)\cap\mathcal{N}_\infty^-$ is also a minimizing sequence of $\mathcal{J}_\infty(u)$ for $m^-$.  Without loss of generality and for the sake of simplicity, we assume $\{u_n\}\subset H^1_r(\bbr^N)\cap\mathcal{N}_\infty^-$ is a minimizing sequence of $\mathcal{J}_\infty(u)$ for $m^-$.  For the sake of clarity, we divide the following proof into several steps.

{\bf Step.~1}\quad We prove that $\{u_n\}$ is bounded in $\h$.

Indeed, if $p=4$, then by calculating $\mathcal{J}_\infty(u_n)-\frac{1}{4}\mathcal{J}_\infty'(u_n)u_n$, we can easily to show that $\{u_n\}$ is bounded in $\h$.  Next, we consider the cases $2<p<\min\{2^*, 4\}$.  Since $\{u_n\}\subset\mathcal{N}_\infty^-$, by a similar argument as used for \eqref{eqnew9025}, we can see that $\mathcal{J}_\infty(u_n)\geq\frac{4-p}{4p}b\mathfrak{A}_{u_n}^2$.  Thus, $\{u_n\}$ is bounded in $D^{1,2}(\bbr^N)$.  On the other hand, since $\{u_n\}\subset\mathcal{N}_\infty^-$, by a similar argument as used for \eqref{eq0014}, we can see that $v_\infty\mathcal{S}^{\frac{2^*(p-2)}{2(2^*-2)}}\mathfrak{C}_u^{\frac{p-2}{2^*-2}}\leq\mathfrak{A}_u^{\frac{2^*(p-2)}{2(2^*-2)}}$.  It follows that $\{u_n\}$ is bounded in $\h$.

{\bf Step.~2}\quad We prove that there exists $u_0\in\mathcal{N}_\infty^-$ such that $\mathcal{J}_\infty(u_0)=m_\infty^-$.

Indeed, by Step.~1, there exists $u_0\in H^1_r(\bbr^N)$ such that $u_n=u_0+o_n(1)$ weakly in $H^1_r(\bbr^N)$.  Thanks to the Sobolev embedding theorem, $u_n=u_0+o_n(1)$ strongly in $L^q(\bbr^N)(2\leq q<2^*)$.  Note that $\{u_n\}\subset\mathcal{N}_\infty^-$, thus, we must have that $G_{u_0,\infty}'(1)\leq0$.  It follows from $(1)$ and $(2)$ of Lemma~\ref{lemnew0007} that there exists $t_{\infty}^-\leq1$ such that $\{t_{\infty}^-u_0\}\subset\mathcal{N}_\infty^-$.  Thus, by $(1)$ and $(2)$ of Lemma~\ref{lemnew0007} once more, we can see that
\begin{eqnarray*}
m_\infty^-+o_n(1)=\mathcal{J}_\infty(u_n)\geq\mathcal{J}_\infty(t_{\infty}^-u_n)\geq\mathcal{J}_\infty(t_{\infty}^-u_0)+o_n(1)\geq m_\infty^-+o_n(1),
\end{eqnarray*}
which implies $\mathcal{J}_\infty(t_{\infty}^-u_0)=m_\infty^-$.

Note that $|t_{\infty}^-u_0|\in\mathcal{N}_\infty^-$ and $\mathcal{J}_\infty(|t_{\infty}^-u_0|)=\mathcal{J}_\infty(t_{\infty}^-u_0)$, by $(5)$ of Lemma~\ref{lemnew0007} and the maximum principle, $|t_{\infty}^-u_0|$ is a positive solution of \eqref{eqnew9019}.
\end{proof}

Based upon Proposition~\ref{propnew0001}, we can obtain the following local compactness result.
\begin{lemma}\label{lemnew0009}
Let $a>0$, $0<b<b_*(a)$ and $p\in(2, 4]\cap(2, 2^*)$.  If the condition $(V)$ holds, then for every $(PS)_{m^-}$ sequence of $\mathcal{J}_V(u)$ contained in $\mathcal{N}_V^-$, there exists a subsequence which is compact in $\h$.
\end{lemma}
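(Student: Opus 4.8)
The plan is to run a concentration–compactness argument comparing $\mathcal{J}_V$ with its limit functional $\mathcal{J}_\infty$, using the positive minimizer of $\mathcal{J}_\infty$ on $\mathcal{N}_\infty^-$ supplied by Proposition~\ref{propnew0001} together with the strictness in condition $(V)$ to forbid any loss of mass at infinity. First I would record that a $(PS)_{m^-}$ sequence $\{u_n\}\subset\mathcal{N}_V^-$ is bounded in $\h$: for $p=4$ this follows by estimating $\mathcal{J}_V(u_n)-\tfrac14\mathcal{J}_V'(u_n)u_n$, and for $2<p<\min\{4,2^*\}$ by the same reasoning as in Step~1 of Proposition~\ref{propnew0001}, namely the lower bound $\mathcal{J}_V(u_n)\geq\frac{4-p}{4p}b\mathfrak{A}_{u_n}^2$ as in \eqref{eqnew9025} bounds $\mathfrak{A}_{u_n}$, and then the interpolation \eqref{eqnew9100} (used as in \eqref{eq0014}) bounds $\mathfrak{C}_{u_n}$. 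Passing to a subsequence, I would assume $u_n\rightharpoonup u_0$ weakly in $\h$, $u_n\to u_0$ in $L^q_{loc}(\bbr^N)$ for $2\le q<2^*$ and a.e., and $\mathfrak{A}_{u_n}\to A_0$; testing $\mathcal{J}_V'(u_n)\to0$ against fixed test functions and using $\mathfrak{A}_{u_n}\to A_0$ to pass to the limit in the nonlocal coefficient, $u_0$ is seen to solve the frozen equation $-(a+bA_0)\Delta u_0+V(x)u_0=|u_0|^{p-2}u_0$, so $u_0$ is a critical point of $\mathcal{J}_V$ in $H^{-1}(\bbr^N)$.

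Next I would establish the strict gap $m^-<m_\infty^-$. Taking the positive minimizer $w_\infty$ of $\mathcal{J}_\infty$ on $\mathcal{N}_\infty^-$ from Proposition~\ref{propnew0001}, the inequality $\mathfrak{C}_{w_\infty,V}\le v_\infty\mathfrak{C}_{w_\infty}$ only tightens the defining condition of $\mathcal{D}$, so $w_\infty\in\mathcal{D}$ and Lemma~\ref{lemnew0002} produces its projection onto $\mathcal{N}_V^-$. Since $w_\infty>0$ everywhere while $V<v_\infty$ on a set of positive measure, one has $\mathfrak{C}_{w_\infty,V}<v_\infty\mathfrak{C}_{w_\infty}$ strictly; comparing the two fibering maps via the monotonicity in Lemmas~\ref{lemnew0002} and \ref{lemnew0007}, this forces $m^-<m_\infty^-$.

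With the gap in hand I would rule out $u_0=0$. If $u_0=0$, then $u_n\to0$ in $L^2_{loc}(\bbr^N)$ and, since $V(x)\to v_\infty$ at infinity, $\mathfrak{C}_{u_n,V}=v_\infty\mathfrak{C}_{u_n}+o_n(1)$; hence $\mathcal{J}_V(u_n)=\mathcal{J}_\infty(u_n)+o_n(1)$ and $\mathcal{J}_V'(u_n)=\mathcal{J}_\infty'(u_n)+o_n(1)$, so after the projection $t_n\to1$ furnished by Lemma~\ref{lemnew0007} one gets $t_nu_n\in\mathcal{N}_\infty^-$ with $\mathcal{J}_\infty(t_nu_n)=m^-+o_n(1)\ge m_\infty^-$, contradicting $m^-<m_\infty^-$. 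Thus $u_0\neq0$. Finally, setting $v_n=u_n-u_0\rightharpoonup0$ and using Brezis–Lieb for $\mathfrak{B}$, the orthogonality of weak limits for $\mathfrak{A}$ and $\mathfrak{C}$, and once more $\mathfrak{C}_{v_n,V}=v_\infty\mathfrak{C}_{v_n}+o_n(1)$, I would split both the energy and the Nehari identity of $\{u_n\}$ into the $u_0$-part and a tail governed by $\mathcal{J}_\infty$; if $v_n\not\to0$ the tail is a nonvanishing $(PS)$ piece for $\mathcal{J}_\infty$ carrying energy at least $m_\infty^-$, which together with $\mathcal{J}_V(u_0)\ge0$ forces $m^-\ge m_\infty^-$, again impossible. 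Hence $v_n\to0$ in $\h$, that is, $u_n\to u_0$ strongly.

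I expect the \textbf{main obstacle} to be the nonlocal Kirchhoff term: because $\mathfrak{A}_{u_n}^2=\mathfrak{A}_{u_0}^2+\mathfrak{A}_{v_n}^2+2\mathfrak{A}_{u_0}\mathfrak{A}_{v_n}+o_n(1)$, the energy does \emph{not} split additively as $\mathcal{J}_V(u_0)+\mathcal{J}_\infty(v_n)$ the way it would in the semilinear case; there is a genuine cross term $\tfrac b2\mathfrak{A}_{u_0}\mathfrak{A}_{v_n}\ge0$. Carrying this term through the splitting, and verifying the sign condition $\mathcal{J}_V(u_0)\ge0$ for the nontrivial critical point $u_0$ (for which the smallness $0<b<b_*(a)$ and the $\mathcal{N}_V$-estimates of Lemmas~\ref{lemnew0003}–\ref{lemnew0005} are the relevant tools), is the delicate point. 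Fortunately the cross term has a favorable sign and only reinforces the inequality $m^-\ge m_\infty^-$ in the non-compact scenario, so it does not obstruct the final contradiction.
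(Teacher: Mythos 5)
Your overall architecture coincides with the paper's: boundedness of the $(PS)_{m^-}$ sequence as in Step~1 of Proposition~\ref{propnew0001}; the strict gap $m^-<m_\infty^-$ obtained from the positive minimizer of Proposition~\ref{propnew0001}, the strictness in condition $(V)$, and a downward fibering projection ($t^-<1$ because $\mathcal{J}_V'(u_0)u_0<0$); then a vanishing/nonvanishing dichotomy with a Brezis--Lieb splitting that keeps the Kirchhoff cross term $\frac b2\mathfrak{A}_{u_0}\mathfrak{A}_{v_n}$. However, your nonvanishing case has a genuine gap, in fact three linked ones. First, from $\mathcal{J}_V'(u_n)\to0$ and $\mathfrak{A}_{u_n}\to A_0$ you may only conclude that $u_0$ solves the frozen equation $-(a+bA_0)\Delta u_0+V(x)u_0=|u_0|^{p-2}u_0$; this does \emph{not} make $u_0$ a critical point of $\mathcal{J}_V$ unless $A_0=\mathfrak{A}_{u_0}$, which is exactly the strong $D^{1,2}$-convergence you are trying to prove. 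In the non-compact scenario $A_0>\mathfrak{A}_{u_0}$, and all one knows is $\mathcal{J}_V'(u_0)u_0<0$. Second, the inequality $\mathcal{J}_V(u_0)\ge0$ is unjustified and cannot be rescued by criticality: for $2<p<\min\{4,2^*\}$ the functional is not sign-definite even on the Nehari set ($m^+<0$ by Lemma~\ref{lemnew0005}), and a $u_0$ with $\mathcal{J}_V'(u_0)u_0<0$ need not lie on $\mathcal{N}_V$ at all. Third, ``the tail is a nonvanishing $(PS)$ piece for $\mathcal{J}_\infty$ carrying energy at least $m_\infty^-$'' fails as stated: tail bubbles solve frozen limit equations (their coefficient carries $bA_0$, not $b$ times their own gradient mass), and even genuine nontrivial critical points of $\mathcal{J}_\infty$ may sit in $\mathcal{N}_\infty^+\cup\mathcal{N}_\infty^0$ with energy below $m_\infty^-$ --- indeed negative, since $m_\infty^+<0$ by $(4)$ of Lemma~\ref{lemnew0007}. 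So your closing chain $m^-\ge\mathcal{J}_V(u_0)+m_\infty^-\ge m_\infty^-$ does not close.

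The paper avoids all three claims by arguing through the fibering maps instead. From $\mathcal{J}_V'(u^*)u^*\le0$ (respectively $\mathcal{J}_\infty'(w_0)w_0\le0$ for a translated bubble) it distinguishes: if the inequality is strict, it projects \emph{down} by a parameter $t\le1$ onto $\mathcal{N}_V^-$ or $\mathcal{N}_\infty^-$, uses that $G_{u_n}$ is increasing on $(0,1]$ (because $u_n\in\mathcal{N}_V^-$ sits at the first, local-maximum, fibering critical point) to write $\mathcal{J}_V(u_n)\ge\mathcal{J}_V(tu_n)$, and splits at the parameter $t$, with a \emph{second} projection of the tail when $\mathcal{J}_\infty(t_\infty^-w_n^1)<0$ (its cases $(a2_1)$ and $(a2_2)$); this yields $m^-\ge m_\infty^-$, contradicting the gap, with no sign information on $\mathcal{J}_V(u^*)$ ever needed. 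If instead equality holds, comparing $\mathcal{J}_\infty'(w_n)w_0=o_n(1)$ with $\mathcal{J}_\infty'(w_0)w_0=0$ forces $\mathfrak{A}_{w_n}\to\mathfrak{A}_{w_0}$, whence strong convergence. A further ingredient you omit --- needed both to place the limit bubble in $\mathcal{N}_\infty^-$ rather than $\mathcal{N}_\infty^0$, and to legitimize your asserted ``$t_n\to1$'' in the vanishing case --- is the uniform nondegeneracy $G_{u_n}''(1)\le-C_1<0$, which the paper proves by a smallness-of-$b$ argument: if $u_n$ approached $\mathcal{N}_V^0\cup\mathcal{N}_V^+$, then simultaneously $\mathfrak{A}_{u_n}\le C_4b^{-1/2}+o_n(1)$ and $\mathfrak{A}_{u_n}\ge C_5b^{-1}+o_n(1)$, impossible for $0<b<b_*(a)$. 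Your observation that the cross term has a favorable sign is correct and is indeed used, but it supplements rather than replaces this case analysis.
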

\begin{proof}
Let $\{u_n\}\subset\mathcal{N}_V^-$ is a $(PS)_{m^-}$ sequence of $\mathcal{J}_V(u)$.  Then $\mathcal{J}_V(u_n)=m^-+o_n(1)$ and $\mathcal{J}_V'(u_n)=o_n(1)$ strongly in $H^{-1}(\bbr^N)$.  For the sake of clarity, we divide the following proof into two steps.

{\bf Step.~1}\quad We prove that $m^-<m_\infty^-$.

Indeed, by Proposition~\ref{propnew0001}, there exists $u_0\in\mathcal{N}_\infty^-$ such that $u_0>0$, $\mathcal{J}_\infty(u_0)=m_\infty^-$ and $\mathcal{J}'_\infty(u_0)=0$ in $H^{-1}(\bbr^N)$.  By the condition $(V)$, we can see that $\mathcal{J}_V'(u_0)u_0<0$.  It follows from Remark~\ref{rmknew0001} and Lemmas~\ref{lemnew0002} and \ref{lemnew0003} that there exists $t^-<1$ such that $t^-u_0\in\mathcal{N}_V^-$.  Now, thanks to $(1)$ and $(2)$ of Lemma~\ref{lemnew0007}, we have
\begin{eqnarray*}
m^-\leq\mathcal{J}_V(t^-u_0)<\mathcal{J}_\infty(t^-u_0)<\mathcal{J}_\infty(u_0)=m_\infty^-.
\end{eqnarray*}

{\bf Step.~2}\quad We prove that there exists a subsequence of $\{u_n\}$ and $u^*\in\h$, still denoted by $\{u_n\}$, such that $u_n=u^*+o_n(1)$ strongly in $\h$.

Indeed, by a similar argument as used in Step.~1 of the proof to Proposition~\ref{propnew0001}, we can show that $\{u_n\}$ is bounded in $\h$.  Thus, without loss of generality, we assume that $u_n=u^*+o_n(1)$ weakly in $\h$ for some $u^*\in\h$.  Clearly, one of the following two cases must happen:
\begin{enumerate}
\item[$(a)$] $u^*=0$;
\item[$(b)$] $u^*\not=0$.
\end{enumerate}
We first consider the case $(a)$.  Since $\{u_n\}\subset\mathcal{N}_V^-$, by \eqref{eqnew9017}, we have that $\mathfrak{B}_{u_n}\geq C_1+o_n(1)$.  Thanks to the Lions lemma, there exist $R>0$ and $\{x_n\}\subset\bbr^N$ such that
\begin{eqnarray}\label{eqnew9026}
\int_{B_R(x_n)}|u_n|^2dx\geq C_2+o_n(1),
\end{eqnarray}
where $B_R(x_n)=\{x\in\bbr^N\mid|x-x_n|<R\}$.  Let $w_n=u_n(\cdot-x_n)$.  Then $\mathfrak{A}_{u_n}=\mathfrak{A}_{w_n}$ and $\mathfrak{C}_{u_n}=\mathfrak{C}_{w_n}$.  By \eqref{eqnew9026} and the Sobolev embedding theorem, we can see that $|x_n|\to+\infty$ as $n\to\infty$, $w_n=w_0+o_n(1)$ weakly in $\h$ and $w_0\not=0$.  Let $w_n^1=w_n-w_0$, then by the Brez\'is--Lieb lemma and the condition $(V)$, we have
\begin{eqnarray}\label{eqnew9027}
\mathcal{J}_V(u_n)=\mathcal{J}_\infty(w_n)+o_n(1)=\mathcal{J}_\infty(w_n^1)+\mathcal{J}_\infty(w_0)+\frac b2\mathfrak{A}_{w_n^1}\mathfrak{A}_{w_0}+o_n(1).
\end{eqnarray}
Moreover, since $\mathcal{J}_V'(u_n)=o_n(1)$ strongly in $H^{-1}(\bbr^N)$, by a standard argument, we can see that $\mathcal{J}_\infty'(w_n)=o_n(1)$ strongly in $H^{-1}(\bbr^N)$ due to the condition $(V)$.  It follows that $\mathcal{J}_\infty'(w_0)w_0\leq0$.  Clearly, there are also two cases:
\begin{enumerate}
\item[$(a1)$] $\mathcal{J}_\infty'(w_0)w_0=0$;
\item[$(a2)$] $\mathcal{J}_\infty'(w_0)w_0<0$.
\end{enumerate}
In the case $(a1)$, since $\mathcal{J}_\infty'(w_n)w_0=o_n(1)$ and $w_n=w_0+o_n(1)$ weakly in $\h$, we have that $\mathfrak{A}_{w_0}=\mathfrak{A}_{w_n}+o_n(1)$.  It follows from $w_n=w_0+o_n(1)$ weakly in $\h$ once more that $w_n=w_0+o_n(1)$ strongly in $D^{1,2}(\bbr^N)$.  Now, by applying the Sobolev embedding theorem and the fact that $(\mathcal{J}_\infty'(w_n)-\mathcal{J}_\infty'(w_0))(w_n-w_0)=o_n(1)$ in a standard way, we can see that $w_n=w_0+o_n(1)$ strongly in $\h$, which implies that $\mathcal{J}_\infty'(w_0)=0$ in $H^{-1}(\bbr^N)$.  Since $u_n\in\mathcal{N}_V^-$ and $w_n=u_n(\cdot-x_n)$, we can see from the condition $(V)$ that $G_{w_0,\infty}''(1)=G_{w_n,\infty}''(1)+o_n(1)=G_{u_n}''(1)+o_n(1)$.  We claim that $G_{u_n}''(1)\leq-C_1+o_n(1)$.  Indeed, let $w^*$ be the positive solution of $(\mathcal{P}_{1, 0})$.  Since $w^*$ is independent of $b$, by choosing $b_*(a)$ small enough if necessary and the condition $(V)$, we have
\begin{eqnarray*}
\mathcal{G}(w^*)&\leq&a\mathfrak{A}_{w^*}+\mathfrak{C}_{w^*,V}\\
&&-\frac{(4-p)a^{\frac{2(p-2)}{4-p}}}{2}\bigg(\frac{(p-2)}{2b}\bigg)^{\frac{p-2}{4-p}}\frac{\mathfrak{B}_{w^*}^{\frac{2}{4-p}}}
{(a\mathfrak{A}_{w^*}+\mathfrak{C}_{w^*,V})^{\frac{2(p-2)}{4-p}}}<0
\end{eqnarray*}
for $0<b<b_*(a)$.  By Lemmas~\ref{lemnew0002} and \ref{lemnew0003}, there exists $t^-_b>0$ such that $t^-_bw^*\in\mathcal{N}_V^-$.  Suppose $t^-_b\to+\infty$ as $b\to0^+$, then by the fact that $t^-_bw^*\in\mathcal{N}_V^-$, we can see that
\begin{eqnarray*}
0&=&(t^-_b)^4b\mathfrak{A}_{w^*}^2+(t^-_b)^2a\mathfrak{A}_{w^*}+(t^-_b)^2\mathfrak{C}_{w^*,V}-(t^-_b)^p\mathfrak{B}_{w^*}\\
&=&(t^-_b)^p\bigg((t^-_b)^{4-p}b\mathfrak{A}_{w^*}^2+(t^-_b)^{2-p}(a\mathfrak{A}_{w^*}+\mathfrak{C}_{w^*,V})-\mathfrak{B}_{w^*}\bigg)\\
&<&(t^-_b)^p\bigg(\frac{p-4}{2}\mathfrak{B}_{w^*}+(t^-_b)^{2-p}(a\mathfrak{A}_{w^*}+\mathfrak{C}_{w^*,V})\bigg)\\
&<&0
\end{eqnarray*}
for $b$ small enough.  In this inequality, we use the fact that $t^-_b<t^*$, where $t^*=\bigg(\frac{(p-2)\mathfrak{B}_{w^*}}{2b\mathfrak{A}_{w^*}^2}\bigg)^\frac{1}{4-p}$ is given by \eqref{eqnew9007}.  Thus, by choosing $b_*(a)$ small enough if necessary, we must have that $t^-_b\leq C_2$ for $0<b<b_*(a)$.  Now, by a standard argument, we can show that $\mathcal{J}_V(u_n)\leq C_3+o_n(1)$.  On the other hand, suppose that dist$(u_n, \mathcal{N}_V^+\cup\mathcal{N}_V^0)=o_n(1)$, where dist$(u, \mathcal{N}_V^+\cup\mathcal{N}_V^0)=\inf\{w\in \mathcal{N}_V^+\cup\mathcal{N}_V^0\mid (\mathfrak{A}_{u-w}+\mathfrak{C}_{u-w})^\frac12\}$.  Then by the Sobolev embedding theorem and the condition $(V)$, we must have that
\begin{eqnarray}
b\mathfrak{A}_{u_n}^2-\mathfrak{B}_{u_n}+a\mathfrak{A}_{u_n}+\mathfrak{C}_{u_n,V}=0\label{eqnew9012}\\
3b\mathfrak{A}_{u_n}^2-(p-1)\mathfrak{B}_{u_n}+a\mathfrak{A}_{u_n}+\mathfrak{C}_{u_n,V}=o_n(1).\notag
\end{eqnarray}
It follows that
\begin{eqnarray}\label{eqnew9101}
(4-p)b\mathfrak{A}_{u_n}^2=(p-2)(a\mathfrak{A}_{u_n}+\mathfrak{C}_{u_n,V})+o_n(1).
\end{eqnarray}
Now, by \eqref{eqnew9012} and \eqref{eqnew9101}, we can see that
\begin{eqnarray*}
o_n(1)+C_3&\geq&\mathcal{J}_V(u_n)\\
&=&\frac{p-2}{2p}(a\mathfrak{A}_{u_n}+\mathfrak{C}_{u_n,V})-\frac{4-p}{4p}b\mathfrak{A}_{u_n}^2\notag\\
&=&\frac{4-p}{4p}b\mathfrak{A}_{u_n}^2+o_n(1),
\end{eqnarray*}
which implies $\mathfrak{A}_{u_n}\leq C_4b^{-\frac{1}{2}}+o_n(1)$.  Note that by \eqref{eqnew9101}, we also have that $\mathfrak{A}_{u_n}\geq C_5b^{-1}+o_n(1)$.  Thus, it is a contradiction for $0<b<b_*(a)$ by choosing $b_*(a)$ small enough if necessary.  Hence, we must have that $G_{u_n}''(1)\leq-C_1+o_n(1)$.  Now, by the definition of $\mathcal{N}_\infty^-$, we have that $w_0\in\mathcal{N}_\infty^-$, which together with \eqref{eqnew9027}, implies that $m^-=m_\infty^-$.  It contradicts to Step.~1.  In the case $(a2)$, by Lemma~\ref{lemnew0007}, there exists $t^-_\infty<1$ such that $t^-_\infty w_0\in\mathcal{N}_\infty^-$.  By Lemma~\ref{lemnew0002} and a similar argument as used for \eqref{eqnew9027}, we can see that
\begin{eqnarray}
\mathcal{J}_V(u_n)&\geq&\mathcal{J}_V(t_\infty^-u_n)\notag\\
&=&\mathcal{J}_\infty(t_\infty^-w_n)+o_n(1)\notag\\
&=&\mathcal{J}_\infty(t_\infty^-w_n^1)+\mathcal{J}_\infty(t_\infty^-w_0)+o_n(1).\label{eqnew9028}
\end{eqnarray}
Clearly, there also two cases:
\begin{enumerate}
\item[$(a2_1)$] $\mathcal{J}_\infty(t_\infty^-w_n^1)\geq0$;
\item[$(a2_2)$] $\mathcal{J}_\infty(t_\infty^-w_n^1)<0$.
\end{enumerate}
In the case $(a2_1)$, we obtain that $m^-\geq m_\infty^-$ by \eqref{eqnew9028}, which is also a contradiction to Step.~1.  In the case $(a2_2)$, by Lemma~\ref{lemnew0007} once more, there exist $t_{\infty,n}^-<t_\infty^-<1$ such that $t_{\infty,n}^-w_n^1\in\mathcal{N}_\infty^-$.  Since $t_{\infty,n}^-<t_\infty^-$, we must have from Lemma~\ref{lemnew0007} once more that $\mathcal{J}_\infty(t_{\infty,n}^-w_0)\geq0$.  Now, by a similar argument as used for \eqref{eqnew9027}, we also have that
\begin{eqnarray}
\mathcal{J}_V(u_n)&\geq&\mathcal{J}_V(t_{\infty,n}^-u_n)\notag\\
&=&\mathcal{J}_\infty(t_{\infty,n}^-w_n)+o_n(1)\notag\\
&=&\mathcal{J}_\infty(t_{\infty,n}^-w_n^1)+\mathcal{J}_\infty(t_{\infty,n}^-w_0)+o_n(1),\notag
\end{eqnarray}
which implies $m^-\geq m_\infty^-$.  It also contradicts to Step.~1.  Hence, we must have the case $(b)$.  Let $w_n=u_n-u^*$.  Then $w_n=o_n(1)$ weakly in $\h$.  Moreover, by the Brez\'is--Lieb lemma and the condition $(V)$, we have
\begin{eqnarray*}
\mathcal{J}_V(u_n)=\mathcal{J}_V(u^*)+\mathcal{J}_\infty(w_n)+\frac12 \mathfrak{A}_{u^*}\mathfrak{A}_{w_n}+o_n(1).
\end{eqnarray*}
Since $\mathcal{J}_V'(u_n)=o_n(1)$ strongly in $H^{-1}(\bbr^N)$, we also have that $\mathcal{J}_V'(u^*)u^*\leq0$.  Now, thanks to Lemmas~\ref{lemnew0002} and \ref{lemnew0003}, we can apply similar arguments as used for the case $(a2)$ to obtain that $m^-\geq m_\infty^-$ if $\mathcal{J}_V'(u^*)u^*<0$.  Thus, we must have $\mathcal{J}_V'(u^*)u^*=0$.  By a similar argument as used for the case $(a1)$, we can see that $u_n=u^*+o_n(1)$ strongly in $\h$, which completes the proof.
\end{proof}

\subsection{Proof of Theorem~\ref{thm1001}}
By the Ekeland variational principle, there exists $\{u_n\}\subset\mathcal{N}_V^-$ such that
\begin{enumerate}
\item[$(1)$] $\mathcal{J}_V(u_n)=m^-+o_n(1)$;
\item[$(2)$] $\mathcal{J}_V(v)-\mathcal{J}_V(u_n)\geq-\frac1n\bigg(\mathfrak{A}_{u_n-v}+\mathfrak{C}_{u_n-v}\bigg)^{\frac12}$ for all $v\in\mathcal{N}_V^-$.
\end{enumerate}
\begin{lemma}\label{lemnew0010}
Let $a>0$, $0<b<b_*(a)$ and $p\in(2, 4]\cap(2, 2^*)$.  If the condition $(V)$ holds, then there exist $\ve_n>0$ and $t_n(l):[-\ve_n, \ve_n]\to[\frac12, \frac32]$ such that $t_n(l)u_n+lw\in\mathcal{N}_V^-$ for all $w\in\mathbb{B}_1:=\{u\in\h\mid\mathfrak{A}_{u}+\mathfrak{C}_{u}=1\}$.  Moreover, $t_n(l)$ are of $C^1$ and
\begin{eqnarray}\label{eqnew9030}
t_n'(0)=\frac{(4b\mathfrak{A}_{u_n}+2a)\langle\nabla u_n, \nabla w\rangle+2\langle u_n, w\rangle-p\int_{\bbr^N}|u_n|^{p-2}u_nwdx}{3b\mathfrak{A}_{u_n}+a\mathfrak{A}_{u_n}+\mathfrak{C}_{u_n,V}-\mathfrak{B}_{u_n}},
\end{eqnarray}
where $\langle\cdot, \cdot\rangle$ is the usual inner product in $L^2(\bbr^N)$.
\end{lemma}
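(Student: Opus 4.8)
The plan is to construct $t_n(l)$ by the implicit function theorem applied to the Nehari constraint restricted to the two-parameter family $tu_n+lw$. First I would set $F(t,l)=\Phi_V(tu_n+lw)=\mathcal{J}'_V(tu_n+lw)(tu_n+lw)$, so that $tu_n+lw\in\mathcal{N}_V$ is exactly the equation $F(t,l)=0$, where $\Phi_V$ is the Nehari functional of Lemma~\ref{lemnew0006}. Since $u_n\in\mathcal{N}_V^-\subset\mathcal{N}_V$ we have $F(1,0)=0$, and because $F(t,0)=t\,G_{u_n}'(t)$ and $G_{u_n}'(1)=0$, differentiating in $t$ gives $\partial_tF(1,0)=G_{u_n}''(1)$. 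The decisive observation is that $u_n\in\mathcal{N}_V^-$ forces $G_{u_n}''(1)<0$ by \eqref{eq1005}, so $\partial_tF(1,0)\neq0$; the implicit function theorem then produces a $C^1$ map $l\mapsto t_n(l)$ on some interval $[-\ve_n,\ve_n]$ with $t_n(0)=1$ and $F(t_n(l),l)\equiv0$, and after shrinking $\ve_n$ we may force $t_n(l)\in[\frac12,\frac32]$.

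It then remains to verify that the projected points land in the smaller set $\mathcal{N}_V^-$, not merely in $\mathcal{N}_V$. Here I would argue by continuity: $l\mapsto t_n(l)u_n+lw$ is continuous into $\h$, the map $v\mapsto G_v''(1)$ is continuous on $\h$, and $G_{u_n}''(1)<0$ strictly; hence $G_{t_n(l)u_n+lw}''(1)<0$ for $l$ small, which combined with $t_n(l)u_n+lw\in\mathcal{N}_V$ yields $t_n(l)u_n+lw\in\mathcal{N}_V^-$ after a final shrinking of $\ve_n$. The formula \eqref{eqnew9030} for $t_n'(0)$ I would read off by differentiating $F(t_n(l),l)\equiv0$ at $l=0$, i.e. $t_n'(0)=-\partial_lF(1,0)/\partial_tF(1,0)$; expanding $\partial_lF(1,0)$ from $\mathfrak{A}_{tu_n+lw}=t^2\mathfrak{A}_{u_n}+2tl\langle\nabla u_n,\nabla w\rangle+l^2\mathfrak{A}_w$ together with the analogous expansions of $\mathfrak{C}_{\cdot,V}$ and $\mathfrak{B}_\cdot$, and using $\partial_tF(1,0)=G_{u_n}''(1)$ rewritten through the Nehari identity $b\mathfrak{A}_{u_n}^2+a\mathfrak{A}_{u_n}+\mathfrak{C}_{u_n,V}=\mathfrak{B}_{u_n}$, gives precisely \eqref{eqnew9030}.

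The main obstacle will be to make the radius $\ve_n$ uniform with respect to all $w\in\mathbb{B}_1$, since $\mathbb{B}_1$ is the non-compact unit sphere of $\h$ and the bare implicit function theorem only provides a $w$-dependent interval. I would resolve this by recording that the relevant derivatives of $F$ are bounded uniformly in $w\in\mathbb{B}_1$: $u_n$ is fixed, $\mathfrak{A}_w+\mathfrak{C}_w=1$, and the Sobolev embedding controls the terms coming from $\mathfrak{B}_\cdot$, so a quantitative version of the implicit function theorem (equivalently, a direct contraction estimate on the map $t\mapsto t-F(t,l)/\partial_tF(1,0)$) yields an $\ve_n$ depending only on $u_n$ and on the fixed positive lower bound $-G_{u_n}''(1)$, hence independent of the particular $w$. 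This uniformity is exactly what the Ekeland argument in the proof of Theorem~\ref{thm1001} requires.
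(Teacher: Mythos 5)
Your proposal is correct and takes essentially the same route as the paper: the paper applies the implicit function theorem to $\mathcal{T}_n(t,l)=b\mathfrak{A}_{tu_n+lw}^2+a\mathfrak{A}_{tu_n+lw}+\mathfrak{C}_{tu_n+lw,V}-\mathfrak{B}_{tu_n+lw}$, which is exactly your $F(t,l)=\Phi_V(tu_n+lw)$, with nondegeneracy from $G_{u_n}''(1)<0$, the formula for $t_n'(0)$ obtained by implicit differentiation, and membership in $\mathcal{N}_V^-$ secured by shrinking $\ve_n$ (the paper via the quantitative bound $3b\mathfrak{A}_{u_n}^2-(p-1)\mathfrak{B}_{u_n}+a\mathfrak{A}_{u_n}+\mathfrak{C}_{u_n,V}\leq-C_1+o_n(1)$, you via continuity of $v\mapsto G_v''(1)$, which suffices for fixed $n$). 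Your closing discussion of making $\ve_n$ uniform over the non-compact sphere $\mathbb{B}_1$ is a welcome detail that the paper's one-line appeal to the implicit function theorem leaves implicit.
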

\begin{proof}
Let
\begin{eqnarray*}
\mathcal{T}_n(t,l)=b\mathfrak{A}_{tu_n+lw}^2+a\mathfrak{A}_{tu_n+lw}+\mathfrak{C}_{tu_n+lw,V}-\mathfrak{B}_{tu_n+lw},
\end{eqnarray*}
where $w\in\mathbb{B}_1$.  Clearly, $\mathcal{T}_n(1,0)=0$.  By applying the implicit function theorem to $\mathcal{T}_n(t,l)$, we can show that there exist $\ve_n>0$ and $t_n(l):[-\ve_n, \ve_n]\to[\frac12, \frac32]$ such that $t_n(l)u_n+lw\in\mathcal{N}_V$ for all $w\in\mathbb{B}_1$.  Moreover, $t_n(l)$ are of $C^1$ and $t_n'(0)$ satisfy \eqref{eqnew9030}.  It remains to show that $t_n(l)u_n+lw\in\mathcal{N}_V^-$.  Indeed, by a similar argument as used in the proof of Lemma~\ref{lemnew0009}, we can see that $3b\mathfrak{A}_{u_n}^2-(p-1)\mathfrak{B}_{u_n}+a\mathfrak{A}_{u_n}+\mathfrak{C}_{u_n,V}\leq-C_1+o_n(1)<0$.  Now, by choosing $\ve_n$ small enough if necessary, we actually have that $t_n(l)u_n+lw\in\mathcal{N}_V^-$.
\end{proof}

Now, we can give the proof of Theorem~\ref{thm1001}.

\noindent\textbf{Proof of Theorem~\ref{thm1001}.}\quad Let $v$ in $(2)$ be $t_n(l)u_n+lw$, then we have
\begin{enumerate}
\item[$(1)$] $\mathcal{J}_V(u_n)=m^-+o_n(1)$;
\item[$(2)$] $\mathcal{J}_V(t_n(l)u_n+lw)-\mathcal{J}_V(u_n)\geq-\frac1n\bigg(\mathfrak{A}_{(t_n(l)-1)u_n+lw}+\mathfrak{C}_{(t_n(l)-1)u_n+lw}\bigg)^{\frac12}$.
\end{enumerate}
By a similar argument as used in the proof of Step.~1 to Proposition~\ref{propnew0001}, we can show that $\{u_n\}$ is bounded in $\h$.  Thanks to \eqref{eqnew9030}, we actually have that $|t_n(0)|\leq C_2(\mathfrak{A}_w+\mathfrak{C}_w)^{\frac12}$.  It follows from $(2)$, Lemma~\ref{lemnew0010} and a standard argument that $\mathcal{J}_V'(u_n)=o_n(1)$ strongly in $H^{-1}(\bbr^N)$.  By Lemma~\ref{lemnew0009}, $u_n=u_*+o_n(1)$ strongly in $\h$ for some $u_*$ up to a subsequence.  It follows from a similar argument as used in the proof of Lemma~\ref{lemnew0009} that $u_*\in\mathcal{N}_V^-$ and $\mathcal{J}_V(u_*)=m^-$.  Note that $|u_*|\in\mathcal{N}_V^-$ and $\mathcal{J}_V(|u_*|)=m^-$, thus, by Lemmas~\ref{lemnew0005} and \ref{lemnew0006}, $|u_*|$ is a nonnegative solution to $(\mathcal{P}_{a, b})$.  Thanks to the maximum principle, $(\mathcal{P}_{a, b})$ has a positive solution for $a>0$, $p\in(2, 2^*)\cap(2, 4]$ and $0<b<b_*(a)$.  It remains to show that $(\mathcal{P}_{a, b})$ only has trivial solution for $a>0$, $p\in(2, 2^*)\cap(2, 4)$ and $b$ large enough in the case $N\geq4$.  Indeed, let $u$ be a nontrivial solution of $(\mathcal{P}_{a, b})$ in the cases $N\geq4$, then by a similar argument as used for \eqref{eqnew9017}, we can see that
\begin{eqnarray}
0&=&b\mathfrak{A}_{u}^2+a\mathfrak{A}_{u}+\mathfrak{C}_{u,V}-\mathfrak{B}_{u}\notag\\
&\geq&b\mathfrak{A}_{u}^2-C_1\mathfrak{A}_{u}^{\frac{2^*}{2}}.\label{eqnew9031}
\end{eqnarray}
Since $2^*=4$ for $N=4$, \eqref{eqnew9031} is impossible for $b$ large enough.  In the cases $N\geq5$, we have that $2^*<4$.  Thus, by \eqref{eqnew9031} and the Young inequality, we can see that
\begin{eqnarray}\label{eqnew9032}
0\geq\frac b2\mathfrak{A}_{u}^2-C_2 b^{-\frac{2^*}{4-2^*}}.
\end{eqnarray}
Note that by a similar argument as used for \eqref{eqnew9017}, we also have that $\mathfrak{A}_{u}\geq C_3$.  Thus, \eqref{eqnew9032} is impossible for $b$ large enough.
\qquad\raisebox{-0.5mm}{%
\rule{1.5mm}{4mm}}\vspace{6pt}

\section{Acknowledgement}
Y. Huang was Natural Science Foundation of China (11471235 and 11171247).  Z. Liu was
supported by Suzhou University of Science and Technology foundation grant (331412104).  Y. Wu was supported by the Fundamental Research Funds for the Central
Universities (2014QNA67).

\end{document}